\documentclass[11pt]{amsart}
\usepackage[utf8]{inputenc}
\usepackage{amsmath}
\usepackage{stmaryrd}
\usepackage{MnSymbol}
\usepackage{hyperref}
\usepackage{tikz}
\usetikzlibrary{automata}
\newtheorem{thm}{Theorem}[section]

\newtheorem{lem}[thm]{Lemma}
\newtheorem{prop}[thm]{Proposition}
\newtheorem{example}[thm]{Example}

\newtheorem{opm}[thm]{Open Problem}
\numberwithin{equation}{section}
\newcommand{\abs}[1]{\left\vert#1\right\vert}

\def\malcev{\mathop{\raise1pt\hbox{\footnotesize$\bigcirc$\kern-8pt\raise1pt\hbox{\tiny$m$}\kern1pt}}}
\begin{document}
\title{Nilpotency and strong nilpotency for finite semigroups}
\author{J. Almeida, M. Kufleitner and M. H. Shahzamanian}
\address{J. Almeida and M. H. Shahzamanian\\ Centro de Matemática e Departamento de Matemática, Faculdade de Ciências,
Universidade do Porto, Rua do Campo Alegre, 687, 4169-007 Porto,
Portugal}
\email{jalmeida@fc.up.pt; m.h.shahzamanian@fc.up.pt\footnote{Corresponding author}}
\address{M. Kufleitner\\ Formal Methods in Computer Science (FMI),  University of Stuttgart, Universitätsstr. 38, D-70569 Stuttgart, Germany}
\email{kufleitner@fmi.uni-stuttgart.de}
\thanks{ 2010 Mathematics Subject Classification. Primary 20M07, 20M35.\\
Keywords and phrases: profinite topologies, monoids, block groups, pseudovarieties, nilpotent semigroups in the sense of Mal'cev.}

\begin{abstract}
Nilpotent semigroups in the sense of Mal'cev are defined by semigroup identities. Finite nilpotent semigroups constitute a pseudovariety, $\mathsf{MN}$, which has finite rank. The semigroup identities that define nilpotent semigroups, lead us to define strongly Mal'cev nilpotent semigroups. Finite strongly Mal'cev nilpotent semigroups constitute a non-finite rank pseudovariety, $\mathsf{SMN}$. The pseudovariety $\mathsf{SMN}$ is strictly contained in the pseudovariety $\mathsf{MN}$ but all finite nilpotent groups are in $\mathsf{SMN}$. 
We show that the pseudovariety $\mathsf{MN}$ is the intersection of the pseudovariety $\mathsf{BG_{nil}}$ with a pseudovariety defined by a $\kappa$-identity.
We further compare the pseudovarieties $\mathsf{MN}$ and $\mathsf{SMN}$ with the Mal'cev product $\mathsf{J} \malcev \mathsf{G_{nil}}$. 
\end{abstract}
\maketitle

\tableofcontents

\section{Introduction}\label{pre}
Mal'cev \cite{Mal} and independently Neumann and Taylor \cite{Neu-Tay} have shown that nilpotent groups can be defined by semigroup identities (that is, without using inverses). This leads to the notion of a nilpotent semigroup (in the sense of Mal'cev). 

For a semigroup $S$ with elements $x,y,z_{1},z_{2}, \ldots$ one recursively defines two sequences 
$$\lambda_n=\lambda_{n}(x,y,z_{1},\ldots, z_{n})\quad{\rm and} \quad \rho_n=\rho_{n}(x,y,z_{1},\ldots, z_{n})$$ by
$$\lambda_{0}=x, \quad \rho_{0}=y$$ and
$$\lambda_{n+1}=\lambda_{n} z_{n+1} \rho_{n}, \quad \rho_{n+1}=\rho_{n} z_{n+1} \lambda_{n}.$$ A $S$ semigroup is said to be \emph{nilpotent} if there exists a positive integer $n$ such that 
$$\lambda_{n}(x,y,z_{1},\ldots, z_{n}) = \rho_{n}(x,y,z_{1},\ldots, z_{n})$$ for all $x,y$ in $S$ and $z_{1}, \ldots, z_{n}$
in $S^{1}$. 
The smallest such $n$ is called the \emph{nilpotency class}
of $S$. Clearly, null semigroups are nilpotent in the sense of Mal'cev.  

A pseudovariety of semigroups is a class of finite semigroups closed under taking subsemigroups, homomorphic images and finite direct products. The finite nilpotent semigroups constitute a pseudovariety which is denoted by $\mathsf{MN}$ \cite{Sha}. 
In \cite{Al-Sha}, the rank of the pseudovariety $\mathsf{MN}$ and some classes defined by several of the variants of Mal'cev nilpotent semigroups are investigated and they are compared. 

Let $S$ be a semigroup.
In this paper, we introduce a further variant of Mal'cev nilpotency, that we call strong Mal'cev nilpotency. For semigroups, the new notion is strictly stronger than Mal'cev nilpotency, but it coincides with nilpotency for groups.
Strongly Mal'cev nilpotent semigroups constitute a pseudovariety which we denote by $\mathsf{SMN}$. We show that $\mathsf{G_{nil}}\subsetneqq\mathsf{SMN}\subsetneqq \mathsf{MN}$ where $\mathsf{G_{nil}}$ is the pseudovariety of all finite nilpotent groups.
Higgins and Margolis showed that $\langle\mathsf{A}\cap\mathsf{Inv}\rangle\subsetneqq \mathsf{A}\cap\langle\mathsf{Inv}\rangle$ \cite{Hig-Mar}. 
In \cite{Al-Sha}, it is proved that $\langle\mathsf{A}\cap\mathsf{Inv}\rangle \subsetneqq\mathsf{A}\cap\mathsf{MN}$. We show that, in fact, $\langle\mathsf{A}\cap\mathsf{Inv}\rangle \subsetneqq\mathsf{A}\cap\mathsf{SMN}$.

The paper \cite{Al-Sha} also shows that
$\mathsf{MN}$ is defined by the pseudoidentity $\phi^{\omega}(x)=\phi^{\omega}(y)$,
where $\phi$ is the continuous endomorphism of the free profinite semigroup on $\{x,y,z,t\}$ such that $\phi(x)=xzytyzx$, $\phi(y)=yzxtxzy$, $\phi(z)=z$, and $\phi(t)=t$. 
In particular, the pseudovariety $\mathsf{MN}$ has finite rank.
We prove that the pseudovariety $\mathsf{SMN}$ has infinite rank and, therefore, it is non-finitely based.
In this paper, we also show that the pseudovariety $\mathsf{MN}$ is the intersection of $\mathsf{BG_{nil}}$ with a pseudovariety defined by a $\kappa$-identity.

Note that the following chain of proper inclusions holds: 
$$\mathsf{G_{nil}}\subsetneqq\mathsf{SMN}\subsetneqq \mathsf{MN}\subsetneqq\mathsf{BG}_{nil}.$$
On the other hand, it is part of a celebrated result that 
$\mathsf{BG}=\mathsf{J} \malcev \mathsf{G}$
where $\malcev$ stands for Mal'cev product \cite{Pin2}. In contrast, the inclusion 
$\mathsf{J} \malcev \mathsf{H}\subsetneqq \mathsf{BH}$ is strict for every proper subpseudovariety $\mathsf{H}$ of $\mathsf{G}$ \cite{Hig-Mar}.

\section{Preliminaries}\label{sec:prelims}
For standard notation and terminology relating to finite semigroups, we refer the reader to
\cite{Cli}. A completely $0$-simple finite semigroup $S$ is isomorphic
with a regular Rees matrix semigroup $\mathcal{M}^{0}(G, n,m;P)$,
where $G$ is a maximal subgroup of $S$, $P$ is the $m\times n$
sandwich matrix with entries in $G^{\theta}$ and $n$ and $m$ are
positive integers. The nonzero elements of $S$ are denoted 
$(g;i,j)$, where $g\in G$, $1\leq i \leq n$ and $1\leq j\leq m$; the
zero element is denoted $\theta$. The $(j,i)$-entry of $P$ is denoted $p_{ji}$. The set of nonzero elements
is denoted $\mathcal{M} (G,n,m;P)$. If all elements of $P$ are
nonzero then $\mathcal{M} (G,n,m;P)$ is a subsemigroup and every completely simple finite
semigroup is of this form. If $P=I_{n}$, the $n\times n$ identity
matrix, then $S$ is an inverse semigroup. Jespers and Okni{\'n}ski proved
that a completely $0$-simple semigroup $\mathcal{M}^{0}(G,n,m;P)$ is
Mal'cev nilpotent if and only if $n=m$, $P=I_{n}$ and $G$ is a nilpotent group
\cite[Lemma 2.1]{Jes-Okn}.

The next lemma is a necessary and sufficient condition for a finite semigroup not to be nilpotent \cite[Lemma 2.2]{Jes-Sha}.
\begin{lem} \label{finite-nilpotent}
A finite semigroup $S$ is not Mal'cev nilpotent if and only if there exist a positive integer $m$, distinct elements $x, y\in S$, and elements 
$ w_{1}, w_{2}, \ldots, w_{m}\in S^{1}$ such that $$x = \lambda_{m}(x, y, w_{1}, w_{2}, \ldots, w_{m}) \mbox{ and } y = \rho_{m}(x,y, w_{1}, w_{2}, \ldots, w_{m}).$$
\end{lem}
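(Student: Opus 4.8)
The plan is to recast the recursion as the action of a family of transformations on $S\times S$ and then run a pigeonhole argument. For $w\in S^{1}$ let $f_{w}\colon S\times S\to S\times S$ be the map $f_{w}(a,b)=(awb,\,bwa)$. Unwinding the definitions of $\lambda_{n}$ and $\rho_{n}$ gives, for all $x,y\in S$ and $z_{1},\dots,z_{n}\in S^{1}$,
$$\bigl(\lambda_{n}(x,y,z_{1},\dots,z_{n}),\ \rho_{n}(x,y,z_{1},\dots,z_{n})\bigr)=(f_{z_{n}}\circ\cdots\circ f_{z_{1}})(x,y).$$
The one fact I would isolate is that the diagonal $\Delta=\{(a,a):a\in S\}$ is \emph{absorbing} for these maps: since $f_{w}(a,a)=(awa,awa)\in\Delta$, once an orbit enters $\Delta$ it stays there; equivalently, if some point of an orbit lies off $\Delta$, then so do all earlier points. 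Observe also that a pair lies off $\Delta$ exactly when its two components are distinct, so the ``distinct $x,y$'' in the statement will come for free.

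For the implication ``$S$ not nilpotent $\Rightarrow$ the stated condition holds'', suppose $S$ is not Mal'cev nilpotent. By definition, for \emph{every} $n$ the identity $\lambda_{n}=\rho_{n}$ fails in $S$; fix $n=|S|^{2}$ together with a witnessing choice of $x,y\in S$ and $z_{1},\dots,z_{n}\in S^{1}$, so that the orbit $p_{0}=(x,y)$, $p_{k}=f_{z_{k}}(p_{k-1})$ satisfies $p_{n}\notin\Delta$. By the absorbing property, $p_{0},p_{1},\dots,p_{n}$ all lie in $(S\times S)\setminus\Delta$, which has only $|S|^{2}-|S|<n+1$ elements; hence $p_{i}=p_{j}$ for some $0\le i<j\le n$. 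Setting $(x_{0},y_{0})=p_{i}$, $m=j-i$, and $w_{\ell}=z_{i+\ell}$ for $1\le\ell\le m$, the equality $(f_{z_{j}}\circ\cdots\circ f_{z_{i+1}})(p_{i})=p_{j}=p_{i}$ says exactly that $\lambda_{m}(x_{0},y_{0},w_{1},\dots,w_{m})=x_{0}$ and $\rho_{m}(x_{0},y_{0},w_{1},\dots,w_{m})=y_{0}$, while $x_{0}\neq y_{0}$ because $p_{i}\notin\Delta$.

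For the converse, assume $x\neq y$ and $w_{1},\dots,w_{m}\in S^{1}$ satisfy $\lambda_{m}(x,y,w_{1},\dots,w_{m})=x$ and $\rho_{m}(x,y,w_{1},\dots,w_{m})=y$; equivalently, $(x,y)\notin\Delta$ is a fixed point of $f_{w_{m}}\circ\cdots\circ f_{w_{1}}$. Extend $\bar w$ periodically, setting $z_{qm+i}=w_{i}$ for all $q\ge0$ and $1\le i\le m$, and consider the orbit $p_{0}=(x,y)$, $p_{k}=f_{z_{k}}(p_{k-1})$; it has period $m$. None of its points lies in $\Delta$, for otherwise the absorbing property would force $p_{m}=p_{0}\in\Delta$. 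Hence, for every $n\ge1$, taking $z_{1},\dots,z_{n}$ as above gives $\bigl(\lambda_{n}(x,y,z_{1},\dots,z_{n}),\rho_{n}(x,y,z_{1},\dots,z_{n})\bigr)=p_{n}\notin\Delta$, whence $\lambda_{n}(x,y,z_{1},\dots,z_{n})\neq\rho_{n}(x,y,z_{1},\dots,z_{n})$. Thus no $n$ makes $\lambda_{n}=\rho_{n}$ an identity of $S$, i.e. $S$ is not Mal'cev nilpotent.

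There is no deep obstacle here. The steps that require some care are: the reformulation via the maps $f_{w}$, so that each step of the recursion depends only on the current pair and on a single new parameter from $S^{1}$; the absorbing property of $\Delta$, which both drives the pigeonhole argument and guarantees that the two output elements are automatically distinct; and, in the converse direction, using the periodic extension of $\bar w$ of exactly the length $n$ under consideration, so that $\lambda_{n}=\rho_{n}$ is refuted for every candidate degree $n$ simultaneously rather than only for multiples of $m$.
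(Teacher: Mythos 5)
Your proof is correct, and it is essentially the same argument as the one the paper relies on (the cited proof of this lemma in Jespers--Shahzamanian, reproduced in spirit in the proof of Lemma~3.5 here): a pigeonhole argument on the orbit of the pair $(\lambda_k,\rho_k)$ under the step maps, together with the observation that the diagonal is absorbing, which yields both the cycle and the distinctness of $x$ and $y$. The periodic extension in the converse direction is also the standard way to refute the identity for every $n$ at once.
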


Assume that a finite semigroup $S$ has a proper ideal $M=\mathcal{M}^{0}(G,n,n;I_{n})$ and $n>1$.
The action $\Gamma$ on the ${\mathcal R}$-classes of $M$ in \cite{Jes-Sha3} is used. In this paper, we consider the dual definition of the action $\Gamma$ as in \cite{Al-Sha}. The action $\Gamma$ is defined to be the action
of $S$ on the ${\mathcal L}$-classes of $M$, that is a representation (a semigroup homomorphism)
$\Gamma : S\to \mathcal{T},$
where $\mathcal{T}$ denotes the full transformation semigroup on the set $\{1, \ldots, n\} \cup \{\theta\}$.
The definition is as follows, for $1\leq j\leq n$ and $s\in S$,
   $$\Gamma(s)(j) = \left\{ \begin{array}{ll}
      j' & \mbox{if} ~(g;i,j)s=(g';i,j')  ~ \mbox{for some} ~ g, g' \in G, \, 1\leq i \leq n\\
       \theta & \mbox{otherwise}\end{array} \right.$$
and
      $\Gamma (s)(\theta ) =\theta .$
We call the representation $\Gamma$ the \emph{${\mathcal L}_M$-representation} of $S$.

For every $s \in S$, $\Gamma(s)$ can be written as a product of orbits which are cycles of the form
$(j_{1}, j_{2}, \ldots, j_{k})$  or sequences of the form $(j_{1}, j_{2},
\ldots, j_{k}, \theta )$, where $1\leq j_{1}, \ldots, j_{k}\leq n$.
The latter orbit means that $\Gamma(s)(j_{i})=j_{i+1}$ for $1\leq
i\leq  k-1$, $\Gamma(s)(j_{k})=\theta$, $\Gamma(s)(\theta) =\theta$ and
there does not exist $1\leq r \leq n$ such that  $\Gamma
(s)(r)=j_{1}$. Orbits of the form $(j)$ with $j\in\{1,\ldots,n\}$ are written explicitly in the decomposition of $\Gamma(s)$. 
By convention, we omit orbits of the form $(j, \theta)$ in
the decomposition of  $\Gamma(s)$ (this is the reason for writing orbits of length one).  If $\Gamma(s)(j)
=\theta$ for every $1\leq j \leq n$, then we simply denote $\Gamma(s)$ by $\overline{\theta}$.

If the orbit $\varepsilon$ appears in the expression of  $\Gamma(s)$ as
a product of disjoint orbits, then we denote this by $\varepsilon
\subseteq \Gamma(s)$. If $\Gamma(s)(j_{1,1})=j_{1,2},\Gamma(s)(j_{1,2})=j_{1,3}, \ldots, \Gamma(s)(j_{1,p_1-1})=j_{1,p_1},\ldots,\Gamma(s)(j_{q,1})=j_{q,2}, \ldots, \Gamma(s)(j_{q,p_q-1})=j_{q,p_q}$, then we write  $$[j_{1,1},j_{1,2}, \ldots,j_{1,p_1};\ldots;j_{q,1},j_{q,2}, \ldots,j_{q,p_q}] \sqsubseteq \Gamma(s).$$

Note that, if $g\in G$ and $1 \leq n_1, n_2 \leq n$ with
$n_1 \neq n_2$ then
$$\Gamma((g;n_1,n_2)) = (n_1,n_2,\theta) \mbox{  and } \Gamma((g;n_1,n_1)) = (n_1).$$
Therefore, if the group $G$ is trivial, then the elements of $M$ may be viewed as transformations.


Also, for every $s \in S$, we  recall a map
$$
\Psi(s): \{1, \ldots, n\}\cup \{\theta\}  \longrightarrow G\cup \{\theta\}
$$
as follows
$$\Psi(s)(j)=g  \;\; \;\; \mbox{ if } \Gamma(s)(j) \neq \theta
\mbox{ and } (1_G;i,j)s=(g;i,\Gamma(s)(j))$$ for some  $1\leq i \leq
n,$ otherwise $\Psi(s)(j)=\theta$. It is straightforward to verify
that $\Psi$ is well-defined.
%


Let $T$ be a semigroup with a zero $\theta_T$ and let $M$ be a regular Rees matrix semigroup $\mathcal{M}^0(\{1\},n,n;I_n)$.
Let $\Delta$ be a representation of $T$ in the full transformation semigroup on the set $\{ 1, \ldots, n\} \cup \{\theta\}$ such that for
every $t\in T$, $\Delta(t) (\theta ) =\theta$, $\Delta^{-1}(\overline{\theta})= \{\theta_T\}$, and
$\Delta(t)$ restricted to  $\{ 1, \ldots, n\} \setminus \Delta(t)^{-1}(\theta)$ is injective. 
The semigroup $S=M \cup^{\Delta} T$ is the $\theta$-disjoint union of $M$ and $T$ (that is the disjoint union with the zeros identified).
The multiplication is such that $T$ and $M$ are subsemigroups, 
$$(1;i,j) \, t = \left\{ \begin{array}{ll}
                              (1; i,\Delta (t)(j)) & \mbox{ if } \Delta (t)(j) \neq \theta\\
                              \theta & \mbox{ otherwise,}
                          \end{array} \right.
                          $$
and
$$ t(1;i,j) = \left\{ \begin{array}{ll}
    (1;i',j) & \mbox{  if }  \Delta (t)(i')=i\\
    \theta &\mbox{ otherwise. }
    \end{array} \right.
    $$
For more details see \cite{Jes-Sha3}.

Let $\mathsf{V}$ be a pseudovariety of finite semigroups.
A pro-$\mathsf{V}$ semigroup is a compact semigroup that is residually
in $\mathsf{V}$. In case $\mathsf{V}$ consists of all finite semigroups, we call pro-$\mathsf{V}$ semigroups profinite semigroups.
We denote by $\overline{\Omega}_{A}\mathsf{V}$ the free pro-$\mathsf{V}$ semigroup on the set $A$ and by $\Omega_{A}\mathsf{V}$ the free
semigroup in the (Birkhoff) variety generated by $\mathsf{V}$. Such free objects are characterized
by appropriate universal properties. For instance, $\overline{\Omega}_{A}\mathsf{V}$ comes endowed with a
mapping $\iota\colon A \rightarrow \overline{\Omega}_{A}\mathsf{V}$ such that, for every mapping $\phi\colon A \rightarrow S$ into a pro-$\mathsf{V}$ semigroup $S$, there exists a unique continuous homomorphism $\widehat{\phi}\colon \overline{\Omega}_{A}\mathsf{V} \rightarrow S$ such that $\widehat{\phi}\circ\iota=\phi$. For more details on this topic we refer the reader to \cite{Alm}. 

Let $S$ be a finite semigroup. Let $\pi_1,\ldots,\pi_r\in  \overline{\Omega}_{r}\mathsf{V}$.
Define recursively a sequence
$(u_{1,i},\ldots,u_{r,i})$ by $(u_{1,0},\ldots,u_{r,0})\in S^r$ and $u_{i,n+1} = \pi_i(u_{1,n},\ldots,u_{r,n})$. Denote $\lim_{n \rightarrow \infty}u_{i,n!}$ by $\circ_i^{\omega}(\pi_1,\ldots,\pi_r)$.
The component $\circ_i^{\omega}(\pi_1,\ldots,\pi_r)$ for $1\leq i \leq r$ is also a member of $\overline{\Omega}_{r}\mathsf{V}$. Moreover, if each $\pi_i$  is a computable operation, then so is each $\circ_i^{\omega}(\pi_1,\ldots,\pi_r)$ \cite[Corollary 2.5]{Alm4}.
Recall that a pseudoidentity (over $\mathsf{V}$) is a formal equality $\pi = \rho$ between  $\pi,\rho\in  \overline{\Omega}_{r}\mathsf{V}$ for some integer $r$.
For a set $\Sigma$ of $\mathsf{V}$-pseudoidentities, we denote by $\llbracket \Sigma \rrbracket_{\mathsf{V}}$ (or simply $\llbracket \Sigma \rrbracket$ if $\mathsf{V}$ is understood from the context) the class of all $S \in \mathsf{V}$ that satisfy all pseudoidentities from $\Sigma$. Reiterman \cite{Rei} proved that a subclass $\mathsf{V}$ of a pseudovariety $\mathsf{W}$ is a pseudovariety if and only if $\mathsf{V}$ is of the form $\llbracket \Sigma \rrbracket_{\mathsf{W}}$ for some set $\Sigma$ of $\mathsf{W}$-pseudoidentities. For the pseudovarieties $\mathsf{G_{nil}}$
and $\mathsf{BG}$, of all finite block groups, that is, finite semigroups in which each element has at most one inverse, we have 
\begin{center}
$\mathsf{G_{nil}}=\llbracket \phi^{\omega}(x)=x^{\omega},x^{\omega}y=yx^{\omega}=y  \rrbracket$ 
\end{center}
where $\phi$ is the continuous endomorphism of the free profinite semigroup on $\{x,y\}$ such that
$\phi(x)=x^{\omega-1}y^{\omega-1}xy,\phi(y)=y$ \cite[Example 4.15(2)]{Alm2}
and 
\begin{center}
$\mathsf{BG}=\llbracket (ef)^{\omega}=(fe)^{\omega} \rrbracket$ 
\end{center}
where $e=x^{\omega},f=y^{\omega}$ (see for example \cite[Exercise 5.2.7]{Alm}).



\section{Strongly nilpotent semigroups}
For a semigroup $S$ with elements $x_1,\ldots,x_t, z_{1},z_{2}, \ldots$ one recursively defines sequences 
$$\lambda_{n,i}=\lambda_{n,i}(x_1,\ldots,x_t;z_{1},\ldots, z_{n})$$ by
$\lambda_{0,i}=x_i$ and
$$\lambda_{n+1,i}=\lambda_{n,i} z_{n+1} \lambda_{n,i+1}z_{n+1}\,\cdots\, \lambda_{n,t}z_{n+1}\lambda_{n,1} z_{n+1}\,\cdots\, \lambda_{n,i-1}$$ 
for every $1\leq i\leq t$. 
A semigroup is said to be \emph{strongly Mal'cev nilpotent}, if there exists a positive integer $n$ such that 
$$\lambda_{n,1}(x_1,\ldots,x_t;z_{1},\ldots, z_{n})=\cdots=\lambda_{n,t}(x_1,\ldots,x_t;z_{1},\ldots, z_{n})$$ for all $x_1,\ldots,x_t$ in $S$ and $z_{1}, \ldots , z_{n}$ in $S^{1}$. The smallest such $n$ is called the \emph{strong Mal'cev nilpotency class} of $S$. 
We denote the class all finite strongly Mal'cev nilpotent semigroups by $\mathsf{SMN}$.
Note that if we choose $t=2$ then the sequences $\lambda_{n,1}$ and $\lambda_{n,2}$ are equal to the sequences $\lambda_{n}$ and $\rho_{n}$, respectively.
Hence, if a semigroup $S$ is strongly Mal'cev nilpotent then it is Mal'cev nilpotent too and, thus, we have $\mathsf{SMN}\subseteq \mathsf{MN}$. The set $\mathsf{SMN}$ is a pseudovariety. It is an example of ultimate equational definition of pseudovariety in the sense of Eilenberg and Sch{\"u}tzenberger \cite{Eil-Sch}. 
Since $\mathsf{SMN}\subseteq \mathsf{MN}$ and $\mathsf{MN}\subsetneqq \mathsf{BG}_{nil}$, we have the following theorem.

\begin{thm}\label{MN'BGnil}
We have $\mathsf{SMN}\subsetneqq \mathsf{BG}_{nil}$.
\end{thm}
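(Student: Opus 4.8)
The plan is to derive the statement from the two facts recorded immediately before it, namely $\mathsf{SMN}\subseteq\mathsf{MN}$ and $\mathsf{MN}\subsetneqq\mathsf{BG}_{nil}$, so the argument is essentially formal. First I would make the inclusion explicit: taking $t=2$ in the definition of the sequences $\lambda_{n,i}$ gives $\lambda_{n,1}=\lambda_n$ and $\lambda_{n,2}=\rho_n$, so the defining condition of $\mathsf{SMN}$ specializes to the defining condition of $\mathsf{MN}$; hence every finite strongly Mal'cev nilpotent semigroup is Mal'cev nilpotent, i.e.\ $\mathsf{SMN}\subseteq\mathsf{MN}$. Composing with the known inclusion $\mathsf{MN}\subseteq\mathsf{BG}_{nil}$ (finite Mal'cev nilpotent semigroups are block groups whose maximal subgroups are nilpotent) yields $\mathsf{SMN}\subseteq\mathsf{BG}_{nil}$.

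For the strictness I would observe that any witness to $\mathsf{MN}\subsetneqq\mathsf{BG}_{nil}$ is already a witness to $\mathsf{SMN}\subsetneqq\mathsf{BG}_{nil}$: choose a finite semigroup $S\in\mathsf{BG}_{nil}\setminus\mathsf{MN}$; since $\mathsf{SMN}\subseteq\mathsf{MN}$ we get $S\notin\mathsf{SMN}$, and therefore $\mathsf{SMN}\subsetneqq\mathsf{BG}_{nil}$. If one prefers an explicit $S$, Lemma~\ref{finite-nilpotent} supplies a convenient criterion: it suffices to exhibit a finite block group with nilpotent maximal subgroups carrying distinct elements $x,y$ together with $w_{1},\ldots,w_{m}\in S^{1}$ such that $x=\lambda_{m}(x,y,w_{1},\ldots,w_{m})$ and $y=\rho_{m}(x,y,w_{1},\ldots,w_{m})$, since such an $S$ lies in $\mathsf{BG}_{nil}$ but, by the lemma, not in $\mathsf{MN}$, hence not in $\mathsf{SMN}$.

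The only input that is not purely bookkeeping is the strict inclusion $\mathsf{MN}\subsetneqq\mathsf{BG}_{nil}$, which I would simply invoke, it being part of the chain of proper inclusions recalled in the introduction; everything else in the proof is immediate from the definitions and from $\mathsf{SMN}\subseteq\mathsf{MN}$. Thus there is no genuine obstacle here, and the one paragraph of formal reasoning above constitutes the whole proof.
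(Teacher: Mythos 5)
Your proposal is correct and is exactly the paper's argument: the theorem is stated as an immediate consequence of $\mathsf{SMN}\subseteq\mathsf{MN}$ (via the $t=2$ specialization) and the known strict inclusion $\mathsf{MN}\subsetneqq\mathsf{BG}_{nil}$, with any witness of the latter serving for the former.
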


Note that we can improve the definition of Mal'cev nilpotency for finite semigroups.
\begin{lem} \label{finite-nil-dif}
A finite semigroup $S$ is Mal'cev nilpotent if and only if there exists a positive integer $n$ such that 
$$\lambda_{n}(x,y,z_{1},\ldots, z_{n}) = \rho_{n}(x,y,z_{1},\ldots, z_{n})$$ for all $x,y,z_{1}, \ldots, z_{n}$
in $S$.
\end{lem}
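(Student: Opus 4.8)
The forward implication is immediate: $S$ is a subsemigroup of $S^1$, so if $\lambda_n=\rho_n$ holds for all $x,y\in S$ and all $z_1,\dots,z_n\in S^1$ then a fortiori it holds when the $z_i$ are taken in $S$. For the converse I argue by contraposition: assuming $S$ is not Mal'cev nilpotent, I exhibit elements of $S$ with $\lambda_n\neq\rho_n$ for arbitrarily large $n$. This suffices, since the property ``$\lambda_n=\rho_n$ identically on $S$'' passes from $n$ to $n+1$ (from $\lambda_n=\rho_n$ one gets $\lambda_{n+1}=\lambda_n z_{n+1}\rho_n=\rho_n z_{n+1}\lambda_n=\rho_{n+1}$), so its failure for infinitely many $n$ is its failure for every $n$. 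I will freely use this ``once equal, always equal'' stability along a fixed substitution, together with the composition law $\lambda_{k+j}(x,y,z_1,\dots,z_{k+j})=\lambda_j\bigl(\lambda_k(x,y,z_1,\dots,z_k),\rho_k(x,y,z_1,\dots,z_k),z_{k+1},\dots,z_{k+j}\bigr)$ and its $\rho$-analogue, which in particular let one iterate a ``fixed point'' $x=\lambda_m(x,y,\vec w)$, $y=\rho_m(x,y,\vec w)$ into one of length $mN$ for every $N\geq1$.

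Fix the integer $n$ furnished by the condition being negated. By Lemma~\ref{finite-nilpotent} there are $m\geq1$, distinct $x,y\in S$ and $w_1,\dots,w_m\in S^1$ with $x=\lambda_m(x,y,\vec w)$ and $y=\rho_m(x,y,\vec w)$. First suppose $\vec w$ is not constantly $1$. If $w_m=1$, put $P=\lambda_{m-1}(x,y,w_1,\dots,w_{m-1})$ and $Q=\rho_{m-1}(x,y,w_1,\dots,w_{m-1})$; then $x=PQ$ and $y=QP$, so $P\neq Q$, and the composition law shows that $(P,Q)$ is again a fixed point, now for the cyclically shifted parameter word $(1,w_1,\dots,w_{m-1})$. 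Iterating this shift, we may assume the parameter word ends in a letter of $S$. If there is still an occurrence of $1$ (otherwise iterating the present fixed point already gives substitutions from $S$ with $\lambda_{n'}\neq\rho_{n'}$ for all positive multiples $n'$ of $m$), iterate the fixed point to length $mN$ and absorb the $1$'s one at a time from the left by the composition law, each absorption consuming a prefix of the parameter word and replacing the first two arguments by products of elements of $S$. Should the leading run of letters from $S$ ever reach length $n$, the hypothesis on $S$ and stability force $\lambda=\rho$ for the whole expression, i.e. $x=y$, which is absurd. As the parameter word keeps ending in a letter of $S$, the absorption cannot exhaust it, so it terminates at a fixed point $\widehat x=\lambda_L(\alpha,\beta,v_1,\dots,v_L)$, $\widehat y=\rho_L(\alpha,\beta,v_1,\dots,v_L)$ with $L\geq1$, all arguments in $S$ and $\widehat x\neq\widehat y$; iterating it produces, for every positive multiple $n'$ of $L$, a substitution from $S$ with $\lambda_{n'}\neq\rho_{n'}$.

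There remains the case where the fixed point supplied by Lemma~\ref{finite-nilpotent} has all parameters $1$, i.e. there are $p\geq1$ and distinct $a,b\in S$ with $a=\lambda_p(a,b,1,\dots,1)$ and $b=\rho_p(a,b,1,\dots,1)$; equivalently, $(a,b)$ is an off-diagonal periodic point of the ``merge'' map $(u,v)\mapsto(uv,vu)$ on $S\times S$. I expect this to be the technical heart. When the period $p$ is $1$, the equations read $ab=a$, $ba=b$, so $a,b$ are idempotents forming a $2$-element left-zero subsemigroup and $\lambda_n(a,b,a,\dots,a)=a\neq b=\rho_n(a,b,a,\dots,a)$ for every $n$ --- contradicting the hypothesis. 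For $p>1$ one must exploit finiteness more: after iterating so that $p$ is even, the equation $a=\lambda_p(a,b,1,\dots,1)$ expresses $a$ as a long product of $a$'s and $b$'s beginning and ending with $a$, so $a=aR=La$ with $R,L\in S$, whence $a$ is a regular element, $\mathcal{L}$-related to the idempotent $R^{\omega}$ and $\mathcal{R}$-related to $L^{\omega}$ (and likewise for $b$). One then aims to localise at an idempotent, pass to a subgroup of $S$ --- where the hypothesis is exactly Mal'cev, i.e. ordinary, nilpotency, so $\lambda_N=\rho_N$ with all parameters $1$ holds for $N$ the nilpotency class --- and conclude $a=b$; or, equivalently, to replace the parameters $1$ by suitable idempotents of $S$ that leave the value of $\lambda_p$ unchanged, so that the hypothesis on $S$ applies directly. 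Carrying this out --- equivalently, showing that a finite semigroup in which $\lambda_n=\rho_n$ holds identically has no off-diagonal periodic point of the merge map --- is the main obstacle I foresee.
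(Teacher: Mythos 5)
Your forward direction and the ``once equal, always equal'' stability along a fixed substitution are correct, as is the cyclic-shift manipulation of a fixed point; but the converse has two genuine gaps. First, the endgame of your absorption argument does not close: after all occurrences of $1$ have been absorbed, what remains is an identity $x=\lambda_L(\alpha,\beta,v_1,\dots,v_L)$ whose seed $(\alpha,\beta)$ is a pair of products built from $x,y$ and the consumed prefix --- it is \emph{not} the pair of values of the expression, so this is not a fixed point and cannot be iterated. Worse, $L$ equals the length of the trailing $S$-run of the period word $\vec w$, a constant independent of the number $N$ of iterations, and it must be $<n$ (otherwise your ``leading run of length $n$'' contradiction would already have fired); an inequality $\lambda_L\neq\rho_L$ with $L<n$ and arguments in $S$ does not contradict a hypothesis that only speaks of level $n$. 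Second, and decisively, you leave open the case where all parameters of the fixed point equal $1$ with period $p>1$. That is not a removable corner case but the heart of the lemma, and you yourself flag it as an obstacle you have not overcome; as it stands the proof is incomplete.

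The paper gets past exactly these points by structural semigroup theory rather than by rewriting the fixed point. It splits on whether $S\in\mathsf{BG}_{nil}$. If $S\notin\mathsf{BG}_{nil}$, some regular $\mathcal J$-class $\mathcal{M}^0(G,n,m;P)$ either has a non-nilpotent maximal subgroup (then Neumann--Taylor gives $\lambda_n\neq\rho_n$ inside $G$, and the identity of $G$ is an element of $S$ that can be substituted for the $z_i$) or has two nonzero sandwich entries in a row or column (then explicit idempotent-like elements of the $\mathcal J$-class are witnesses lying in $S$). If $S\in\mathsf{BG}_{nil}$, the fixed point sits in an inverse Rees matrix semigroup $\mathcal{M}^0(G,q,q;I_q)$ over a nilpotent group; writing $x=(g;i,j)$, either $i\neq j$, in which case $(i,j)\subseteq\Gamma(w_1)$ for some $w_1\in S$ and the explicit substitution $w_1,w_1^2,w_1,w_1^2,\dots$ from $S$ separates $\lambda_n$ from $\rho_n$ at every level, or $i=j$, in which case the whole fixed point collapses into the maximal subgroup and Neumann--Taylor forces $G$ to be non-nilpotent, contradicting $S\in\mathsf{BG}_{nil}$. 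Your sketch for the all-ones case (``localise at an idempotent, pass to a subgroup'') points at precisely this reduction, but without invoking the Rees matrix structure of the regular $\mathcal J$-class containing $x$ and $y$ it is not carried out.
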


\begin{proof}
Suppose that there exists a finite semigroup $S$ such that $S$ satisfies the condition of the lemma and $S$ is not Mal'cev nilpotent. 

If $S\not\in \mathsf{BG}_{nil}$, then there exists a regular ${\mathcal J}$-class $\mathcal{M}^0(G,n,m;P)\setminus \{\theta\}$ of $S$ such that one of the following conditions holds:
\begin{enumerate}
\item $G$ is not a nilpotent group;
\item there exist integers $1\leq i_1,i_2\leq n$ and $1\leq j\leq m$ such that $p_{ji_1},p_{ji_2}\neq \theta$;
\item there exist integers $1\leq i\leq n$ and $1\leq j_1,j_2\leq m$ such that $p_{j_1i},p_{j_2i}\neq \theta$.
\end{enumerate}
If $G$ is not a nilpotent group, then, by \cite[Corollary 1]{Neu-Tay}, $G$ is not Mal'cev nilpotent. 
Since $G$ has an identity, $G$ does not satisfy the condition of the lemma, a contradiction.
If (2) holds, then
\begin{align*}
&\lambda_{n}((1_G;i_1,j),(1_G;i_2,j),(1_G;i_1,j),(1_G;i_1,j),\ldots, (1_G;i_1,j)) \neq\\
&\rho_{n}((1_G;i_1,j),(1_G;i_2,j),(1_G;i_1,j),(1_G;i_1,j),\ldots, (1_G;i_1,j)),
\end{align*} for every integer $0\leq n$. A contradiction with the assumption. Similarly, we have a contradiction for Condition (3).

Now, suppose that $S\in \mathsf{BG}_{nil}$. Since $S$ satisfies the condition of the lemma and $S\not\in\mathsf{MN}$, by Lemma~\ref{finite-nilpotent} there exist a positive integer $m$, distinct elements $x, y\in S$ and elements 
$ w_{1}, w_{2}, \ldots, w_{m-1}\in S^{1}$ such that 
\begin{eqnarray} \label{nil-fi}
~x = \lambda_{m}(x, y, 1,w_{1}, \ldots, w_{m-1}) \mbox{ and } y = \rho_{m}(x,y, 1,w_{1},\ldots, w_{m-1}).
\end{eqnarray}
As $x\neq y$ and $S\in \mathsf{BG}_{nil}$, by \eqref{nil-fi}, there exists a regular ${\mathcal J}$-class $$M=\mathcal{M}^0(G,n,n;I_n)\setminus \{\theta\}$$ of $S$ such that $x,y\in M$. Then, there exist elements $(g;i,j),(g';i',j')\in M$ such that $x=(g;i,j)$ and $y=(g';i',j')$. As $\lambda_1,\rho_1\in M$, we have $j=i'$ and $j'=i$. Hence, we have $(i,j)\subseteq \Gamma(w_1)$ when $\Gamma$ is an ${\mathcal L}_{M}$-representation of $M$. If $i\neq j$ then $w_1\neq 1$ and, thus,
\begin{align*}
&\lambda_{n}((1_G;i,i),(1_G;j,j),w_1,w_1^2,w_1,w_1^2,\ldots) \neq\\
&\rho_{n}((1_G;i,i),(1_G;j,j),w_1,w_1^2,w_1,w_1^2,\ldots),
\end{align*} for every integer $0\leq n$. A contradiction with the assumption. If $i=j$, then we have 
\begin{align*}
&g=\lambda_{n}(g,g',1,\psi(w_1)(i),\psi(w_2)(i),\ldots,\psi(w_{m-1})(i)),\\
&g'=\rho_{n}(g,g',1,\psi(w_1)(i),\psi(w_2)(i),\ldots,\psi(w_{m-1})(i)).
\end{align*} Since $x\neq y$, we have $g\neq g'$. It follows that $G$ is not nilpotent. 

The result follows.
\end{proof}

Now, by Lemma~\ref{finite-nil-dif} with using the same method as in the proof of \cite[Lemma 2.2]{Jes-Sha}, we can improve Lemma~\ref{finite-nilpotent}. 
\begin{lem} \label{finite-nilpotent-2}
A finite semigroup $S$ is not Mal'cev nilpotent if and only if there exist a positive integer $m$, distinct elements $x, y\in S$ and elements 
$ w_{1}, w_{2}, \ldots, w_{m}\in S$ such that $$x = \lambda_{m}(x, y, w_{1}, w_{2}, \ldots, w_{m}) \mbox{ and } y = \rho_{m}(x,y, w_{1}, w_{2}, \ldots, w_{m}).$$
\end{lem}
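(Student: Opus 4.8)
The plan is to upgrade Lemma~\ref{finite-nilpotent} so that the witnessing elements $w_1,\ldots,w_m$ may be taken in $S$ rather than in $S^1$, exactly mirroring the passage from Lemma~\ref{finite-nilpotent} to Lemma~\ref{finite-nil-dif} in the group-identity setting. One direction is immediate: if such $x,y,w_1,\ldots,w_m\in S$ exist with $x=\lambda_m$ and $y=\rho_m$ and $x\ne y$, then the same data witness non-nilpotency via Lemma~\ref{finite-nilpotent} (taking the $w_i\in S\subseteq S^1$). So the content is the forward implication: assuming $S$ is not Mal'cev nilpotent, produce a witness with all $w_i$ in $S$.

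First I would invoke Lemma~\ref{finite-nilpotent} to get a positive integer $m$, distinct $x,y\in S$, and $w_1,\ldots,w_m\in S^1$ with $x=\lambda_m(x,y,w_1,\ldots,w_m)$ and $y=\rho_m(x,y,w_1,\ldots,w_m)$. If already all $w_i\in S$ we are done, so suppose some $w_i=1$. The strategy, following the proof of Lemma~\ref{finite-nil-dif}, is to split according to whether $S\in\mathsf{BG}_{nil}$. If $S\notin\mathsf{BG}_{nil}$, then there is a regular $\mathcal J$-class $\mathcal M^0(G,n,p;P)\setminus\{\theta\}$ failing one of conditions (1)--(3) from the proof of Lemma~\ref{finite-nil-dif}: if $G$ is non-nilpotent we replace everything by an explicit failure of the $\lambda_n=\rho_n$ identity inside $G$, where the free generators $z_i$ can be chosen to be the identity of $G$, which lies in $S$ — here one must be slightly careful and instead use a nonidentity element of $G$ or observe $G$ trivial forces $G$ nilpotent, so actually conditions (2) or (3) apply; and in the configurations (2), (3) one writes the explicit pair of distinct elements $(1_G;i_1,j),(1_G;i_2,j)$ together with structure constants $(1_G;i_1,j)\in S$ as the $w_i$'s, exactly as in the displayed inequalities of the earlier proof. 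If instead $S\in\mathsf{BG}_{nil}$, then $x,y$ must lie in a common regular $\mathcal J$-class $M=\mathcal M^0(G,n,n;I_n)\setminus\{\theta\}$ (since $x\ne y$ and they are $\mathcal L$- and $\mathcal R$-related through $\lambda_1,\rho_1$), and one analyzes the $\mathcal L_M$-representation $\Gamma$: either some $w_i$ with $w_i\ne 1$ already moves the relevant index, in which case one substitutes powers of that $w_i$ (as in the $w_1,w_1^2$ pattern of the previous proof) to get the witness in $S$; or all the relevant action is trivial and the obstruction is pushed into the group $G$ via $\Psi$, where $g\ne g'$ forces $G$ non-nilpotent and one again produces the failure inside $G$ using a nonidentity group element for the $z$'s.

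The main obstacle — and the only place where genuine care is needed — is the bookkeeping in the $S\in\mathsf{BG}_{nil}$ case: one has to verify that the $S^1$-witness from Lemma~\ref{finite-nilpotent} can always be converted to an $S$-witness, handling the boundary case where the group $G$ is trivial (so that the coordinate action is all there is and no nonidentity group element is available) versus where it is nontrivial but non-nilpotent. In the trivial-group case, $M$ consists of honest transformations, and if the distinct elements $x,y$ of $M$ are to satisfy $x=\lambda_m$, $y=\rho_m$ at all then some index must be genuinely permuted, so the corresponding $w_i\ne 1$; replacing that $w_i$ and its powers (which remain in $S$ since $M$ is an ideal-like subsemigroup, or one works inside $S$) by the cyclic pattern used in the proof of Lemma~\ref{finite-nil-dif} yields an explicit non-nilpotency witness with all entries in $S$. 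The remaining cases are routine adaptations of the displayed computations already carried out in the proof of Lemma~\ref{finite-nil-dif}, so the write-up is a matter of re-running that argument while insisting that every substituted element be chosen from $S$, which the statement permits by the hypothesis that $S$ is infinite — correction: by the hypothesis only that $S$ is finite and non-nilpotent, which by Lemma~\ref{finite-nil-dif} already guarantees the relevant structural obstructions exist and are located where we need them. Collecting these cases proves the lemma.
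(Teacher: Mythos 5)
Your overall route is the paper's: reduce to the case analysis behind Lemma~\ref{finite-nil-dif} and then close the iteration up into a loop. But as written the argument does not reach the conclusion of the lemma, because the one step that actually produces the equations $x=\lambda_m(x,y,w_1,\dots,w_m)$ and $y=\rho_m(x,y,w_1,\dots,w_m)$ is missing. In each of your cases (non-nilpotent maximal subgroup, conditions (2)/(3) on the sandwich matrix, the two subcases inside $\mathsf{BG}_{nil}$) what you exhibit is a tuple of elements of $S$ on which the identity $\lambda_n=\rho_n$ fails for every $n$ --- that is, you re-derive the contrapositive of Lemma~\ref{finite-nil-dif} --- not a pair of distinct elements fixed by the iteration; for instance, with $x=(1_G;i_1,j)$, $y=(1_G;i_2,j)$ and all $w_k=(1_G;i_1,j)$ one gets $\lambda_1=(p_{ji_1}p_{ji_2};i_1,j)\neq x$ in general. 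The bridge is the pigeonhole argument from the proof of \cite[Lemma~2.2]{Jes-Sha}, which the paper explicitly invokes: since $\lambda_k=\rho_k$ propagates to all later indices, $\lambda_n\neq\rho_n$ forces $\lambda_k\neq\rho_k$ for all $k\leq n$; taking $n=\abs{S}^2+1$ there are $r_1<r_2$ with $(\lambda_{r_1},\rho_{r_1})=(\lambda_{r_2},\rho_{r_2})$, and setting $x'=\lambda_{r_1}$, $y'=\rho_{r_1}$, $m=r_2-r_1$ and $w_k=z_{r_1+k}$ yields the required equalities with $x'\neq y'$ and every $w_k$ among the chosen $z_i\in S$. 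Without this step your ``witnesses'' certify the wrong statement.

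Once that step is in place, the detour through the $S^1$-witness of Lemma~\ref{finite-nilpotent} and the subsequent conversion is unnecessary: apply the contrapositive of Lemma~\ref{finite-nil-dif} directly to obtain, for $n=\abs{S}^2+1$, elements $x,y,z_1,\dots,z_n\in S$ with $\lambda_n\neq\rho_n$, and then loop as above; all $w_k$ lie in $S$ because the $z_i$ do. Two smaller points: your hesitation in the group case is unfounded --- if $G$ is a non-nilpotent maximal subgroup, then $1_G$ is an idempotent of $S$, so it is a perfectly admissible value for the $w_i$ and no nonidentity element is needed; and the closing remark about ``the hypothesis that $S$ is infinite'' should simply be deleted.
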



Neumann and Taylor proved that a group $G$ is nilpotent with the nilpotency class $n$ if and only if it is Mal'cev nilpotent with the nilpotency $n$ \cite[Corollary 1]{Neu-Tay}. The following lemma, presents a similar result for strong Mal'cev nilpotency.

\begin{lem} \label{nil-group}
Let $n \geq 1$. Then the following conditions are equivalent for
a group $G$:
\begin{enumerate}
\item $G$ is a nilpotent group of class $n$.
\item $G$ is strongly Mal'cev nilpotent with the strong Mal'cev nilpotency class $n$.
\end{enumerate}
\end{lem}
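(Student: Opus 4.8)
The plan is to prove the two implications separately, exploiting the fact that the sequences $\lambda_{n,i}$ are built from the group commutator structure much like the classical $\lambda_n,\rho_n$.

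\medskip

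\textbf{Proof strategy.} First I would establish $(2)\Rightarrow(1)$, which is the easy direction: as noted just before the statement, taking $t=2$ recovers $\lambda_n$ and $\rho_n$, so strong Mal'cev nilpotency of class $n$ implies Mal'cev nilpotency of class at most $n$, and by Neumann--Taylor \cite[Corollary 1]{Neu-Tay} this means $G$ is nilpotent of class at most $n$. To get exactly $n$, one also needs the reverse inequality, which will come out of the other direction, so it suffices here to record class $\le n$.

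\medskip

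For $(1)\Rightarrow(2)$, the key is to rewrite the recursion for $\lambda_{n,i}$ in a group so that commutators appear. In a group $G$, set $c_{n,i} = \lambda_{n,1}^{-1}\lambda_{n,i}$ (comparing each entry against the first). The defining recursion
$$\lambda_{n+1,i} = \lambda_{n,i}\, z_{n+1}\, \lambda_{n,i+1}\, z_{n+1}\cdots \lambda_{n,t}\, z_{n+1}\, \lambda_{n,1}\, z_{n+1}\cdots \lambda_{n,i-1}$$
has the same product of all $\lambda_{n,j}$'s interleaved with $z_{n+1}$, just starting at a different index; so $\lambda_{n+1,i}$ and $\lambda_{n+1,1}$ differ only by a cyclic permutation of a fixed word $w = \lambda_{n,1} z_{n+1}\lambda_{n,2} z_{n+1}\cdots \lambda_{n,t} z_{n+1}$ (with the last $z_{n+1}$ stripped appropriately). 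Cyclic conjugates of a word differ by conjugation by a prefix, hence $\lambda_{n+1,i} = u_i^{-1}\,\lambda_{n+1,1}\, u_i$ for suitable words $u_i$ in the $\lambda_{n,j}$ and $z_{n+1}$. Therefore each $c_{n+1,i}$ lies in the subgroup generated by the $c_{n,j}$ together with commutators of the form $[\lambda_{n,1},\,\text{something}]$; an induction on $n$ then shows $c_{n,i}\in\gamma_{n+1}(G)$, the $(n+1)$-th term of the lower central series, where I would need to track carefully that each step of the recursion gains one level in the central series. Concretely, I expect the cleanest route is: show by induction on $n$ that for all choices of the $x_j$ and $z_k$, one has $\lambda_{n,i}\equiv \lambda_{n,1} \pmod{\gamma_{n+1}(G)}$ for every $i$, using at the inductive step that a cyclic permutation of a product changes it only by a commutator with the omitted cyclic block, and that block, together with one of the $\lambda_{n,j}$'s, already sits one level down by the induction hypothesis.

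\medskip

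\textbf{Main obstacle.} The delicate part is the bookkeeping in the inductive step: verifying that passing from level $n$ to level $n+1$ in the $\lambda_{n,i}$ recursion corresponds exactly to dropping from $\gamma_{n+1}(G)$ to $\gamma_{n+2}(G)$, with the correct base case, so that nilpotency class $n$ forces $\lambda_{n,1}=\cdots=\lambda_{n,t}$ (since $\gamma_{n+1}(G)=\{1\}$) and class $n-1$ is genuinely insufficient. For the latter, I would exhibit, in a group of class exactly $n$, specific choices of $x_1,\ldots,x_t$ and $z_1,\ldots,z_{n-1}$ (mirroring the standard witness for Mal'cev nilpotency, e.g.\ $t=2$ with $x_1,x_2,z_1,\ldots,z_{n-1}$ chosen so that $\lambda_{n-1}\ne\rho_{n-1}$, which exists since $G$ is not Mal'cev nilpotent of class $n-1$) showing $\lambda_{n-1,1}\neq\lambda_{n-1,2}$; combined with the upper bound from direction $(2)\Rightarrow(1)$ this pins the strong class at exactly $n$. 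The identification of $\lambda$-levels with lower-central-series levels is routine commutator calculus once set up, but the choice of how to group the cyclic-shift discrepancy into a single commutator is where care is required to avoid an off-by-one in the class.
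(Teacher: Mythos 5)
Your overall plan is close to the mark, and the invariant you propose to prove by induction --- that $\lambda_{m,i}\equiv\lambda_{m,1}\pmod{\gamma_{m+1}(G)}$ for all $i$ --- is the right one (the paper proves the dual statement, passing to $G/Z(G)$ and using the upper central series; the two are interchangeable here). The easy direction via the $t=2$ specialisation and Neumann--Taylor, and the pinning of the exact class, are also fine. The problem is the mechanism you offer for the inductive step. You argue that $\lambda_{m+1,i}$ and $\lambda_{m+1,1}$ are cyclic shifts of one another, hence differ by conjugation by a prefix, hence by a commutator with ``the omitted cyclic block'', and that this commutator ``already sits one level down by the induction hypothesis''. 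It does not: the prefix $p_i=\lambda_{m,1}z_{m+1}\cdots\lambda_{m,i-1}z_{m+1}$ is a generic element of $G$, not an element of $\gamma_{m+1}(G)$, so the commutator produced by the cyclic shift lies only in $\gamma_2(G)$, with no gain in depth as $m$ grows. If cyclic conjugation alone sufficed, every group would already satisfy the level-one identities, i.e.\ be abelian. The depth gain must come from applying the induction hypothesis to the \emph{blocks}, not to the shift.

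The correct step (which is exactly what the paper does with $Z(G)$) is: write $\lambda_{m,j}=\lambda_{m,1}c_j$ with $c_j\in\gamma_{m+1}(G)$ by the induction hypothesis; since $[\gamma_{m+1}(G),G]=\gamma_{m+2}(G)$, each $c_j$ is central modulo $\gamma_{m+2}(G)$, so in the product defining $\lambda_{m+1,i}$ all the $c_j$ may be collected at the end, leaving $(\lambda_{m,1}z_{m+1})^{t-1}\lambda_{m,1}\cdot\prod_j c_j$ modulo $\gamma_{m+2}(G)$ --- an expression independent of $i$, because the cyclic order only permutes the mutually commuting factors $c_j$. This closes the induction, and $\gamma_{n+1}(G)=1$ for a group of class $n$ then yields the identity at level $n$. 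So the gap is genuine but local: once you replace the ``cyclic shift equals one commutator'' step by the centrality of the correction factors modulo the next term of the series, your argument becomes a lower-central-series transcription of the paper's upper-central-series proof.
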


\begin{proof}
If $G$ is strongly Mal'cev nilpotent with the strong Mal'cev nilpotency class $1$, then 
$\lambda_{1,1}(x_1,x_2;1)=\lambda_{1,2}(x_1,x_2;1)$, 
for all $x_1,x_2$ in $G$. It follows that $x_1x_2 =x_2x_1$.
Thus (1), (2) are equivalent to the commutativity of $G$. 

Assume that the assertion holds for some $n > 1$. Let $x_1,\ldots,x_t,z_{1}, \ldots , z_{n}$ in $G$ and $a_i=\lambda_{n,i}(x_1,\ldots,x_t;z_{1},\ldots, z_{n})$ for every $1\leq i\leq t$.
For any $x \in G$, denote by $\overline{x}$ the image of
$x$ in $G/Z(G)$. If $G$ is nilpotent of class $n + 1$, then $G / Z (G)$ is nilpotent of class $n$ and, 
by the induction hypothesis we have
$$\lambda_{n,1}(\overline{x}_1,\ldots,\overline{x}_t;\overline{z}_{1},\ldots, \overline{z}_{n})=\cdots=
\lambda_{n,t}(\overline{x}_1,\ldots,\overline{x}_t;\overline{z}_{1},\ldots, \overline{z}_{n}).$$
Thus, there exist elements $v_{i,j}\in Z(G)$ such that 
$$a_i=\lambda_{n,i}(x_1,\ldots,x_t;z_{1},\ldots, z_{n})=\lambda_{n,j}(x_1,\ldots,x_t;z_{1},\ldots, z_{n})v_{i,j}=a_jv_{i,j}$$
for all $1\leq i,j\leq t$ with $i\neq j$.
Let $1\leq k,i,j\leq t$ with $i\neq j$ and let $$(b_1,\ldots,b_t)=(a_{k},\ldots,a_t,a_1,\ldots,a_{k-1}).$$ There exist integers $g$ and $h$ such that $b_g=a_i$ and $b_h=a_j$.
Since $a_i=a_jv_{i,j}$ and $v_{i,j}\in Z(G)$, we have 
\begin{align*} 
\lambda_{n+1,k}(x_1,\ldots,x_t;z_{1},\ldots, z_{n+1})
&=b_1z_{n+1}b_2z_{n+1}\ldots b_g\ldots b_h\ldots z_{n+1}b_t\\
&=b_1z_{n+1}b_2z_{n+1}\ldots a_i\ldots a_j\ldots z_{n+1}b_t\\
&=b_1z_{n+1}b_2z_{n+1}\ldots a_jv_{i,j}\ldots a_j\ldots z_{n+1}b_t\\
&=b_1z_{n+1}b_2z_{n+1}\ldots a_j\ldots a_jv_{i,j}\ldots z_{n+1}b_t\\
&=b_1z_{n+1}b_2z_{n+1}\ldots a_j\ldots a_i \ldots z_{n+1}b_t.
\end{align*}
Since we take the elements $a_i$ and $a_j$ arbitrarily, we have $$\lambda_{n+1,k}(x_1,\ldots,x_t;z_{1},\ldots, z_{n+1})=a_1z_{n+1}a_2z_{n+1}\ldots  z_{n+1}a_t.$$
Therefore $G$ is strongly Mal'cev nilpotent with the strong Mal'cev nilpotency class $n+1$.

Now, assume that $G$ is strongly Mal'cev nilpotent with the strong Mal'cev nilpotency class $n+1$. Hence $G$ is Mal'cev nilpotent with the nilpotency class $n'$ with $n'\leq n+1$. Then, by \cite[Corollary 1]{Neu-Tay}, $G$ is a nilpotent group with the nilpotency class $n'$. If $n'<n+1$, then, by assertion, $G$ is strongly Mal'cev nilpotent with the strong Mal'cev nilpotency class $n'$, a contradiction. Hence, $G$ is a nilpotent group with the nilpotency $n+1$.
\end{proof}

As was mentioned about Lemma \ref{finite-nilpotent}, it is proved in \cite{Jes-Sha} that a finite semigroup $S$ is not Mal'cev nilpotent if and only if there exist a positive integer $m$, distinct elements $x, y\in S$ and elements 
$ w_{1}, w_{2}, \ldots, w_{m}\in S^{1}$ such that $x = \lambda_{m}(x, y, w_{1}, w_{2}, \ldots, w_{m})$ and $y = \rho_{m}(x,y, w_{1}, w_{2}, \ldots, w_{m})$. We proceed with some lemmas that serve to give a criterion for finite semigroups not to be strongly Mal'cev nilpotent (Lemma~\ref{finite-nilpotent'}). 

\begin{lem} \label{[,]1}
Let $S$ be a finite semigroup.
Suppose that 
$$S= S_1 \supset S_2 \supset \ldots \supset S_{s} \supset S_{s+1} = \emptyset$$
is a principal series of $S$ and there is an integer $1\leq p\leq s$ such that the following conditions are satisfied:
\begin{enumerate}
\item $S_p / S_{p+1}$ is an inverse completely $0$-simple semigroup, say $$M=\mathcal{M}^{0}(G,q,q;I_q);$$
\item there exist an integer $1<t$, integers $\alpha_i,\beta_i$ $(1\leq i\leq t)$, and elements $v_1,\ldots,v_t\in S\setminus S_{p+1}$ with $$[\beta_1,\alpha_{1+i\pmod t};\ldots;\beta_t,\alpha_{t+i\pmod t}]\sqsubseteq \Gamma(v_i)\quad (1\leq i\leq t),$$  where $\Gamma$ is an ${\mathcal L}_M$-representation of $S/S_{p+1}$;
\item $1<\abs{\{\alpha_1,\ldots,\alpha_t\}}<t$ or $1<\abs{\{\beta_1,\ldots,\beta_t\}}<t$.
\end{enumerate}
Then, there exists an integer $t'$ such that the following conditions are satisfied:
\begin{enumerate}
\item $t'\neq 1$ and $t'\mid t$;
\item $\abs{\{\alpha_1,\ldots,\alpha_{t'}\}}=t'$;
\item $[\beta_1,\alpha_{1+i\pmod {t'}};\ldots;\beta_{t'},\alpha_{t'+i\pmod {t'}}]\sqsubseteq \Gamma(v_i)$ $(1\leq i\leq t')$.
\end{enumerate}
\end{lem}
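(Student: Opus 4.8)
The plan is to treat $\alpha_1,\dots,\alpha_t$ and $\beta_1,\dots,\beta_t$ as periodic sequences of elements of $\{1,\dots,q\}$ with indices read modulo $t$. First I would record two facts. Letting $i$ range over $\{1,\dots,t\}$ in hypothesis~(2), every $\alpha_k$ and every $\beta_k$ occurs as an entry of some $[\,\cdots\,]\sqsubseteq\Gamma(v_i)$, so each $\alpha_k,\beta_k$ lies in $\{1,\dots,q\}$ and is distinct from $\theta$; and, since $M$ is an \emph{inverse} completely $0$-simple ideal of $S/S_{p+1}$, for every $s\in S/S_{p+1}$ the map $\Gamma(s)$ is a partial injection, that is, injective on $\{1,\dots,q\}\setminus\Gamma(s)^{-1}(\theta)$. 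The latter is the standard property of the $\mathcal{L}_M$-representation when $P=I_q$; if it is not available from the preliminaries, it can be proved by associativity, comparing the two ways of evaluating $(1_G;i,j)\,s\,(1_G;\Gamma(s)(k),1)$ for a fixed $k$ with $\Gamma(s)(k)\neq\theta$. In particular each $\Gamma(v_i)$ is a partial injection, and hypothesis~(2) says precisely that $\Gamma(v_i)(\beta_k)=\alpha_{k+i}$ for all $i,k$ (indices modulo $t$).

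Next I would locate the integer $t'$, which will be the common least period of $\alpha$ and $\beta$. From $\Gamma(v_i)(\beta_k)=\alpha_{k+i}$: if $e$ is a period of $\beta$, then $\alpha_{k+i}=\Gamma(v_i)(\beta_k)=\Gamma(v_i)(\beta_{k+e})=\alpha_{k+e+i}$ for all $k$, so $e$ is a period of $\alpha$; conversely, if $d$ is a period of $\alpha$ then $\Gamma(v_i)(\beta_k)=\alpha_{k+i}=\alpha_{k+d+i}=\Gamma(v_i)(\beta_{k+d})\neq\theta$, and partial injectivity of $\Gamma(v_i)$ forces $\beta_k=\beta_{k+d}$, so $d$ is a period of $\beta$. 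Since the periods of a $t$-periodic sequence are exactly the multiples of its least period and that least period divides $t$, it follows that $\alpha$ and $\beta$ share a least period $t'$ with $t'\mid t$. This is the integer claimed by the lemma; it remains to verify the three conclusions.

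The condition $t'\neq 1$ is exactly where hypothesis~(3) is used: if $t'=1$, then both $\alpha$ and $\beta$ are constant, so $\abs{\{\alpha_1,\dots,\alpha_t\}}=\abs{\{\beta_1,\dots,\beta_t\}}=1$, contradicting~(3). The main point is $\abs{\{\alpha_1,\dots,\alpha_{t'}\}}=t'$. Suppose instead $\alpha_a=\alpha_b$ with $1\le a<b\le t'$, so $0<b-a<t'$. For every $i\in\{1,\dots,t\}$ we get $\Gamma(v_i)(\beta_{a-i})=\alpha_a=\alpha_b=\Gamma(v_i)(\beta_{b-i})\neq\theta$, so partial injectivity of $\Gamma(v_i)$ gives $\beta_{a-i}=\beta_{b-i}$; as $i$ runs over all residues this says that $b-a$ is a period of $\beta$, contradicting that $t'$ is its least period. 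Finally, for $1\le i,k\le t'$ we have $\Gamma(v_i)(\beta_k)=\alpha_{k+i}$, and since $\alpha$ has period $t'$ with $t'\mid t$ this equals $\alpha_{k+i\pmod{t'}}$; hence $[\beta_1,\alpha_{1+i\pmod{t'}};\dots;\beta_{t'},\alpha_{t'+i\pmod{t'}}]\sqsubseteq\Gamma(v_i)$ for $1\le i\le t'$, which is the third conclusion. The only delicate ingredient is the partial-injectivity of $\Gamma$ (and noticing that it is what forces both the transfer of periods between $\alpha$ and $\beta$ and the absence of short repetitions within a block of $\alpha$); everything else is elementary combinatorics of periodic sequences.
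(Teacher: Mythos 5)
Your proof is correct and follows essentially the same route as the paper's: both arguments rest on the partial injectivity of each $\Gamma(v_i)$ (which the paper leaves implicit in the orbit decomposition) together with the induced periodicity of the index sequences, and take $t'$ to be the resulting least period dividing $t$. Your version merely organizes this more systematically, transferring periods between $\alpha$ and $\beta$ in both directions so that the two cases of hypothesis (3) are handled uniformly rather than "similarly".
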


\begin{proof}
We have $1<\abs{\{\alpha_1,\ldots,\alpha_t\}}<t$ or $1<\abs{\{\beta_1,\ldots,\beta_t\}}<t$.
First, we assume that $1<\abs{\{\alpha_1,\ldots,\alpha_t\}}<t$. 
Since $$\abs{\{\alpha_1,\ldots,\alpha_t\}}<t,$$
there exist integers $1\leq h_1<h_2\leq t$ such that $\alpha_{h_1}=\alpha_{h_2}$ and if $\abs{h_4-h_3}<h_2-h_1$, for some distinct integers $h_3$ and $h_4$, then $\alpha_{h_3}\neq\alpha_{h_4}$. 
First, suppose that $h_2-h_1=1$. Since 
$$[\beta_1,\alpha_{1};\ldots;\beta_t,\alpha_{t}]\sqsubseteq \Gamma(v_t) \mbox{ and } [\beta_1,\alpha_{2};\beta_2,\alpha_{3};\ldots;\beta_t,\alpha_{1}]\sqsubseteq \Gamma(v_1),$$
we have $\alpha_1=\cdots=\alpha_t$, which contradicts the initial assumption. Now, suppose that $1<h_2-h_1$.
By our assumption, the integers $\alpha_1,\ldots,\alpha_{(h_2-h_1)}$ are pairwise distinct. Again, as 
$$[\beta_1,\alpha_{1};\ldots;\beta_t,\alpha_{t}]\sqsubseteq \Gamma(v_t) \mbox{ and } [\beta_1,\alpha_{2};\beta_2,\alpha_{3};\ldots;\beta_t,\alpha_{1}]\sqsubseteq \Gamma(v_1),$$ we have $\alpha_j=\alpha_{j+\gamma(h_2-h_1)\pmod t}$, for every $0\leq \gamma$ and $1\leq j\leq (h_2-h_1)$. Also, since the integers $\alpha_1,\ldots,\alpha_{(h_2-h_1)}$ are pairwise distinct, we have $(h_2-h_1)\mid t$. Therefore, we have
$$[\beta_1,\alpha_{1+i\pmod{(h_2-h_1)}};\ldots;\beta_{(h_2-h_1)},\alpha_{(h_2-h_1)+i\pmod{(h_2-h_1)}}]\sqsubseteq \Gamma(v_i),$$ for every $1\leq i\leq h_2-h_1$.

The proof in case $1<\abs{\{\beta_1,\ldots,\beta_t\}}<t$ is similar.
\end{proof}

We can get the following lemma from the results of the paper \cite{Jes-Sha3}. We present a similar lemma as well as the analogous result for strong Mal'cev nilpotency (Lemma~\ref{[,]}).
 
\begin{lem} \label{[,]'}
Let $S\in \mathsf{BG}_{nil}$. The semigroup $S$ is not Mal'cev nilpotent if and only if there exist ideals $A,B$ of $S$, an inverse Rees matrix semigroup $M=\mathcal{M}^{0}(G,q,q;I_q)$, and elements $x,y,w,v$ such that the following conditions are satisfied:
\begin{enumerate}
\item $B\subsetneqq A$ and $A/B\cong M$;
\item $x=(g;\alpha,\beta),y=(g';\alpha',\beta')\in M$ and $\alpha\neq\alpha'$;
\item $w,v\in S\setminus B$, $[\beta,\alpha';\beta',\alpha] \sqsubseteq \Gamma(w)$ and $[\beta',\alpha';\beta,\alpha] \sqsubseteq \Gamma(v)$, where $\Gamma$ is an ${\mathcal L}_M$-representation of $S/B$.
\end{enumerate}
\end{lem}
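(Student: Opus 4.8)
The plan is to prove both implications by exploiting the combinatorial description of non-nilpotency given by Lemma~\ref{finite-nilpotent-2} together with the structure of semigroups in $\mathsf{BG}_{nil}$, whose regular $\mathcal{J}$-classes are all of the form $\mathcal{M}^0(G,q,q;I_q)$ with $G$ nilpotent. For the ``if'' direction, assume such $A,B,M,x,y,w,v$ exist. I would look at the sequences $\lambda_n,\rho_n$ evaluated at $x,y$ and at the word $w,v,w,v,\ldots$ (in $S$, as allowed by Lemma~\ref{finite-nilpotent-2}), and track their images under the $\mathcal{L}_M$-representation $\Gamma$ of $S/B$. The conditions $[\beta,\alpha';\beta',\alpha]\sqsubseteq\Gamma(w)$ and $[\beta',\alpha';\beta,\alpha]\sqsubseteq\Gamma(v)$ are precisely what is needed so that the $\mathcal{L}$-coordinates of the $\lambda_n$'s and $\rho_n$'s cycle back to $\beta$ and $\beta'$ respectively (and the $\mathcal{R}$-coordinates stay at $\alpha,\alpha'$), so that all these products remain in $M$ and never fall into $B$. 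One then passes to the group coordinate via the map $\Psi$: the group entries satisfy a Mal'cev-type recursion in $G$, and since $G$ is nilpotent hence Mal'cev nilpotent, after enough steps the group coordinates of $\lambda_n$ and $\rho_n$ coincide. Combining, for a suitable $m$ one gets $\lambda_m(x,y,\ldots)=x$ and $\rho_m(x,y,\ldots)=y$ with $x\neq y$ (they differ already in the $\mathcal{R}$-coordinate $\alpha\neq\alpha'$), so by Lemma~\ref{finite-nilpotent-2} the semigroup $S$ is not Mal'cev nilpotent. This direction is essentially a bookkeeping argument imported from \cite{Jes-Sha3}.

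For the ``only if'' direction, suppose $S\in\mathsf{BG}_{nil}$ is not Mal'cev nilpotent. By Lemma~\ref{finite-nilpotent-2} choose a minimal $m$, distinct $x,y\in S$, and $w_1,\ldots,w_m\in S$ with $x=\lambda_m(x,y,w_1,\ldots,w_m)$, $y=\rho_m(x,y,w_1,\ldots,w_m)$. Since $x\neq y$ and all $\lambda_n,\rho_n$ lie in $J(x)=J(y)$ (an argument as in the proof of Lemma~\ref{finite-nil-dif}: distinct fixed points of such recursions force membership in a common regular $\mathcal{J}$-class), that $\mathcal{J}$-class, being in $\mathsf{BG}_{nil}$, is of the form $M=\mathcal{M}^0(G,q,q;I_q)\setminus\{\theta\}$. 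Write $x=(g;\alpha,\beta)$, $y=(g';\alpha',\beta')$. The fixed-point equations at level $1$ force, via the identity matrix $I_q$, matching of $\mathcal{R}$- and $\mathcal{L}$-coordinates: $\lambda_1=xw_1y\in M$ gives $\beta$ pairs with the row index of $y$, etc., which yields $[\beta,\alpha';\beta',\alpha]\sqsubseteq\Gamma(w_1)$ and $[\beta',\alpha';\beta,\alpha]\sqsubseteq\Gamma(w_2)$ for the $\mathcal{L}_M$-representation of $S/B$, where $B$ is the ideal below $M$ in a principal series. Finally one must rule out $\alpha=\alpha'$: if $\alpha=\alpha'$ (and then $\beta=\beta'$ by the same reasoning), $x$ and $y$ would be $\mathcal{H}$-equivalent and their difference would be purely in the group coordinate, giving a nilpotency failure inside $G$ via $\Psi$, contradicting nilpotency of $G$. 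Hence $\alpha\neq\alpha'$, and setting $w=w_1$, $v=w_2$, $A=J(x)\cup B$ completes the construction.

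The main obstacle I anticipate is the careful verification in the ``only if'' direction that the coordinate data extracted from $w_1,w_2$ really is independent of the later letters $w_3,\ldots,w_m$ and is captured solely by the two orbit conditions on $\Gamma(w)$ and $\Gamma(v)$ — i.e., that the alternating pattern at the top two levels of the $\lambda,\rho$ recursion already dictates the required $\sqsubseteq$ relations regardless of what happens deeper. This requires tracking how $\Gamma$ interacts with products $\lambda_{n+1}=\lambda_n z_{n+1}\rho_n$: one must show that the $\mathcal{L}$-coordinate of $\lambda_{n+1}$ is obtained by first moving along $\Gamma(z_{n+1})$ from the $\mathcal{L}$-coordinate of $\lambda_n$ and then to the $\mathcal{L}$-coordinate of $\rho_n$, and that all intermediate images stay off $\theta$. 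Once this propagation lemma is in place the argument is a direct adaptation of the corresponding result in \cite{Jes-Sha3}; the hypothesis $S\in\mathsf{BG}_{nil}$ is what guarantees the square shape and identity sandwich matrix that make the coordinate arithmetic go through, and the nilpotency of $G$ is used only to handle the degenerate $\mathcal{H}$-class case.
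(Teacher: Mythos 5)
Your overall strategy is the right one and coincides with how the paper handles this family of statements: the paper itself gives no proof of Lemma~\ref{[,]'} (it is attributed to the results of \cite{Jes-Sha3}), but it proves the strong-nilpotency analogue, Lemma~\ref{[,]}, by exactly the route you describe for the ``only if'' direction --- pigeonhole/fixed points via Lemma~\ref{finite-nilpotent-2}, localization in a principal factor $\mathcal{M}^0(G,q,q;I_q)$, extraction of the orbit conditions on $\Gamma(w_1),\Gamma(w_2)$, and disposal of the degenerate case $\alpha=\alpha'$, $\beta=\beta'$ by pushing the recursion into the group coordinate and invoking nilpotency of $G$ (cf.\ Lemma~\ref{nil-group} there, Neumann--Taylor here). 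Your closing worry is unfounded: the conditions on $\Gamma(w_1)$ and $\Gamma(w_2)$ depend only on $\lambda_1,\rho_1,\lambda_2,\rho_2$ not falling into $B$, which is automatic because $\lambda_m=x\notin B$ and $B$ is an ideal. The only points you should make explicit are that $m$ may be assumed even (replace the witness sequence by its double, since $\lambda_m=x$, $\rho_m=y$ are fixed points), and that ``$\alpha=\alpha'$ implies $\beta=\beta'$'' needs the injectivity of $\Gamma(w_1)$ on its domain, which is where $S\in\mathsf{BG}$ is used.

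There is, however, a genuine error in your ``if'' direction. You claim that because $G$ is nilpotent the group coordinates of $\lambda_n$ and $\rho_n$ eventually coincide, and that ``combining'' this yields $\lambda_m(x,y,\dots)=x$ and $\rho_m(x,y,\dots)=y$. This does not follow: equality of the two group coordinates at stage $m$ says $\lambda_m=(h;\alpha,\beta)$ and $\rho_m=(h;\alpha',\beta')$ for a common $h$, which is unrelated to returning to the original group entries $g$ and $g'$; nilpotency of $G$ gives you nothing in the direction you need (if anything it works against producing distinct fixed points). The repair is easy and you already have the ingredients: with the witnesses $x,y$ and $z_1=w$, $z_2=v$, $z_3=w,\dots$ alternating, your coordinate tracking shows every $\lambda_n$ lies in $A\setminus B$ with $\mathcal{R}$-coordinate $\alpha$ and every $\rho_n$ with $\mathcal{R}$-coordinate $\alpha'$; since $\alpha\neq\alpha'$, $\lambda_n\neq\rho_n$ for \emph{every} $n$, which already negates the definition of Mal'cev nilpotency with no fixed-point lemma needed. (Alternatively, apply the pigeonhole principle to the pairs $(\lambda_{2k},\rho_{2k})\in M\times M$ to find $2k_1<2k_2$ with equal pairs, and invoke Lemma~\ref{finite-nilpotent} at the elements $\lambda_{2k_1}\neq\rho_{2k_1}$ --- not at $x,y$ themselves, to which the sequence need not return.)
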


\begin{lem} \label{[,]}
Let $S\in \mathsf{BG}_{nil}$. The semigroup $S$ is not strongly Mal'cev nilpotent if and only if there exist ideals $A,B$ of $S$, an inverse Rees matrix semigroup $M=\mathcal{M}^{0}(G,q,q;I_q)$, an integer $t$ and elements $y_1,\ldots,y_t,v_1,\ldots,v_t$ such that the following conditions are satisfied:
\begin{enumerate}
\item $B\subsetneqq A$ and $A/B\cong M$;
\item $1<t$;
\item $y_i=(g_i;\alpha_i,\beta_i)\in M$ $(1\leq i\leq t)$ and $\abs{\{\alpha_1,\ldots,\alpha_t\}}=t$;
\item $v_1,\ldots,v_t\in S\setminus B$ and $[\beta_1,\alpha_{1+i\pmod t};\ldots;\beta_t,\alpha_{t+i\pmod t}]\sqsubseteq \Gamma(v_i)$ $(1\leq i\leq t)$, where $\Gamma$ is an ${\mathcal L}_M$-representation of $S/B$.
\end{enumerate}
\end{lem}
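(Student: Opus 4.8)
The plan is to mimic the structure of the proof of Lemma~\ref{[,]'}, but with the ``two-element'' witness $x,y$ replaced by the $t$-tuple $y_1,\dots,y_t$ coming from the definition of strong Mal'cev nilpotency, and to use Lemma~\ref{[,]1} to ``trim'' a raw witness down to one in which the first coordinates $\alpha_1,\dots,\alpha_t$ are genuinely pairwise distinct.

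For the ``if'' direction I would argue by contradiction, essentially as in the corresponding part of the criterion for ordinary Mal'cev nilpotency. Suppose such $A,B,M,t,y_1,\dots,y_t,v_1,\dots,v_t$ exist and yet $S$ is strongly Mal'cev nilpotent of some class $n$. Consider the sequence $\lambda_{m,i}(y_1,\dots,y_t;v_1,\dots,v_m)$ obtained by taking the $z$'s to cycle periodically through $v_1,\dots,v_t$ (so $z_{n+1}$ is chosen compatibly with the index shift in condition~(4)). Working in the $\mathcal L_M$-representation $\Gamma$ of $S/B$, condition~(4) says precisely that each $v_i$ ``rotates'' the configuration of $\mathcal L$-classes by one step, so the images of the $y_i$ under the $\lambda_{m,i}$ stay inside $M$ (no coordinate collapses to $\theta$, because $\abs{\{\alpha_1,\dots,\alpha_t\}}=t$ keeps the relevant partial maps injective on the active points), and the first coordinates of $\lambda_{m,i}$ and $\lambda_{m,j}$ remain distinct for $i\neq j$ (again because the $\alpha$'s are distinct and the $v_i$ act as the appropriate bijections). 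Hence $\lambda_{n,i}\neq\lambda_{n,j}$ for $i\neq j$, contradicting strong nilpotency. The one thing to be careful about here is the role of $G$: since $S\in\mathsf{BG}_{nil}$, $G$ is a nilpotent group, but the bad behaviour lives entirely in the $\mathcal L$-class structure, so the group part of the $y_i$ may be taken trivial (or irrelevant), exactly as in Lemma~\ref{[,]'}.

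For the ``only if'' direction, suppose $S\in\mathsf{BG}_{nil}$ is not strongly Mal'cev nilpotent. Then for each $n$ there are $x_1,\dots,x_t$ and $z_1,\dots,z_n$ with $\lambda_{n,i}\neq\lambda_{n,j}$ for some $i\neq j$; by a pigeonhole/finiteness argument over the finitely many subsemigroups of $S$ (the same device used in Lemma~\ref{finite-nilpotent} and its refinements, iterating the tuple-valued operation until it enters a cycle, cf.\ the $\circ_i^\omega$ construction recalled in Section~\ref{sec:prelims}) I can arrange that some power of the defining operation fixes the tuple, i.e.\ obtain $y_1,\dots,y_t$ and $v_1,\dots,v_m$ with $y_i=\lambda_{m,i}(y_1,\dots,y_t;v_1,\dots,v_m)$ for all $i$ while not all $y_i$ are equal. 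Since $S\in\mathsf{BG}_{nil}$ and the $y_i$ are related by products of the form in the recursion, the distinct ones lie in a common regular $\mathcal J$-class, which (again because $S\in\mathsf{BG}_{nil}$) has the form $A/B\cong M=\mathcal{M}^0(G,q,q;I_q)$; this gives conditions~(1) and the shape of the $y_i$ in~(3). Reading the fixed-point equations through $\Gamma$ shows that the $v_i$ must rotate the $\mathcal L$-classes as in~(4), with the raw multiplicities of the $\alpha_i$'s possibly not yet all distinct. At this point I invoke Lemma~\ref{[,]1}: it produces a divisor $t'\mid t$, $t'>1$, with $\abs{\{\alpha_1,\dots,\alpha_{t'}\}}=t'$ and condition~(4) holding for the truncated data, which is exactly conditions~(2)--(4) of the present lemma (after relabelling $t'$ as $t$). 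If instead, after this reduction, all the $\alpha_i$ coincided, one checks as in Lemma~\ref{finite-nil-dif} that the obstruction would have to come from $G$ not being nilpotent, contradicting $S\in\mathsf{BG}_{nil}$.

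The main obstacle I anticipate is the ``only if'' direction, specifically the step of converting a non–strongly-nilpotent witness (which a priori only guarantees $\lambda_{n,i}\neq\lambda_{n,j}$ for \emph{some} pair $i\neq j$, and only for each fixed $n$) into a single finite periodic witness that simultaneously lands in one $0$-simple quotient and has the clean rotational form~(4) with the right distinctness property~(3). Making the indices line up correctly under the cyclic shift $i\mapsto i+1\pmod t$ in the recursion for $\lambda_{n+1,i}$, and then applying Lemma~\ref{[,]1} to pass to $t'$ without breaking condition~(4), is where the bookkeeping is genuinely delicate; the ``if'' direction and the group-theoretic side conditions are routine given Lemma~\ref{finite-nil-dif} and Lemma~\ref{[,]'}.
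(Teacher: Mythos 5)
Your proposal follows essentially the same route as the paper: for necessity, a pigeonhole argument over $S^t$ produces a tuple fixed by $\lambda_{m,\cdot}$, which is then located in an inverse principal factor $\mathcal{M}^0(G,q,q;I_q)$, the case where all $\alpha_i$ and $\beta_i$ coincide is excluded via the group being nilpotent (Lemma~\ref{nil-group}), and Lemma~\ref{[,]1} trims the witness to one with $\abs{\{\alpha_1,\ldots,\alpha_{t'}\}}=t'$; sufficiency is the direct cyclic-iteration check you describe. This matches the paper's proof in both structure and key steps.
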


\begin{proof}
First, suppose that $S$ is not strongly Mal'cev nilpotent.  
Let $k=\abs{S}$. Since $S$ is not strongly Mal'cev nilpotent, there exist elements
$a_1,\ldots,a_t\in S$ with $t>1$, and $w_{1}, \ldots, w_{k^t+1}$ $ \in S^1$ such
that
$$
\abs{\{\lambda_{k^t+1,1}(a_1,\ldots,a_t;w_{1},\ldots, w_{k^t+1}),\ldots,\lambda_{k^t+1,t}(a_1,\ldots,a_t;w_{1},\ldots, w_{k^t+1})\}}\neq 1
.$$

Since
$\abs{S^t} =k^t$, there exist positive integers $r_1$ and $r_2
\leq k^t+1$ with $r_1< r_2$ such that
\begin{align*}
&(\lambda_{r_1,1}(a_1,\ldots,a_t;w_{1},\ldots, w_{r_1}),\ldots,\lambda_{r_1,t}(a_1,\ldots,a_t;w_{1},\ldots, w_{r_1}))\\ 
&=(\lambda_{r_2,1}(a_1,\ldots,a_t;w_{1},\ldots, w_{r_2}),\ldots,\lambda_{r_2,t}(a_1,\ldots,a_t;w_{1},\ldots, w_{r_2})).
\end{align*} 
Put $y_i= \lambda_{r_1,i}(a_1,\ldots,a_t;w_{1},\ldots, w_{r_1})$ $(1\leq i\leq t)$, $m=r_2-r_1$, and $v_j=w_{r_1+j}$ $(1\leq j\leq m)$. This gives the equalities
\begin{eqnarray}\label{lambda-kt}
&y_i= \lambda_{m,i}(y_1,\ldots,y_t;v_{1},\ldots, v_{m})\quad (1\leq i\leq t).
\end{eqnarray}
Since $$\abs{\{\lambda_{k^t+1,1}(a_1,\ldots,a_t;w_{1},\ldots, w_{k^t+1}),\ldots,\lambda_{k^t+1,t}(a_1,\ldots,a_t;w_{1},\ldots, w_{k^t+1})\}}\neq 1,$$ we have $1<\abs{\{y_1,\ldots,y_t\}}$. 
Let 
$$S= S_1 \supset S_2 \supset \ldots \supset S_{s} \supset S_{s+1} = \emptyset$$
be a principal series of $S$. Suppose that $y_1 \in S_{p} \setminus
S_{p+1}$ for some $1\leq p\leq s$. Because $S_p$ and $S_{p+1}$ are
ideals of $S$, the equalities \eqref{lambda-kt} yield $y_1,\ldots,y_t \in S_p \setminus
S_{p+1}$ and $v_{1},\ldots, v_{m} \in S \setminus S_{p+1}$.
Since $S\in \mathsf{BG}_{nil}$, $S_p / S_{p+1}$ is an inverse completely
$0$-simple semigroup, say $M=\mathcal{M}^{0}(G,q,q;I_q)$. Then there exist integers $1\leq \alpha_{i},\beta_{i}\leq q$ and elements $g_{i}\in G$ such that $y_i=(g_{i};\alpha_{i},\beta_{i})$ $(1\leq i\leq t)$.
The equalities \eqref{lambda-kt}, imply that
$$[\beta_1,\alpha_{1+i\pmod t};\ldots;\beta_t,\alpha_{t+i\pmod t}]\sqsubseteq \Gamma(v_i)\quad (1\leq i\leq t).$$
If $\alpha_1=\cdots=\alpha_t=\alpha$ and $\beta_1=\cdots=\beta_t=\beta$, then $[\beta,\alpha]\sqsubseteq \Gamma(v_i)$ $(1\leq i\leq m)$. Therefore, we have 
$$g_i= \lambda_{m,i}(g_1,\ldots,g_t;\Psi(v_{1})(\beta),\ldots, \Psi(v_{m})(\beta))\quad (1\leq i\leq t).$$
Since $1<\abs{\{y_1,\ldots,y_t\}}$, we have $1<\abs{\{g_1,\ldots,g_t\}}$. Then, by Lemma~\ref{nil-group}, $G$ is not a nilpotent group. This contradicts the assumption that $S\in \mathsf{BG}_{nil}$. Then, there exist distinct integers $1\leq h,h'\leq t$ such that $\alpha_h\neq \alpha_{h'}$ or $\beta_h\neq \beta_{h'}$. Now, by Lemma~\ref{[,]1}, 
there exists an integer $t'$ such that the following conditions are satisfied:
\begin{enumerate}
\item $t'\neq 1$ and $t'\mid t$;
\item $\abs{\{\alpha_1,\ldots,\alpha_{t'}\}}=t'$;
\item $[\beta_1,\alpha_{1+i\pmod {t'}};\ldots;\beta_{t'},\alpha_{t'+i\pmod {t'}}]\sqsubseteq \Gamma(v_i)$ $(1\leq i\leq t')$.
\end{enumerate}

The converse, follows at once from the definition of strong Mal'cev nilpotency.  
\end{proof}

Now, we can improve the definition of strong Mal'cev nilpotency for finite semigroups as well as Lemma~\ref{finite-nil-dif}.
\begin{lem} \label{finite-nilpotent-2'}
A finite semigroup $S$ is strongly Mal'cev nilpotent if and only if there exists a positive integer $n$ such that 
$$\lambda_{n,1}(x_1,\ldots,x_t;z_{1},\ldots, z_{n})=\cdots=\lambda_{n,t}(x_1,\ldots,x_t;z_{1},\ldots, z_{n})$$ for all $x_1,\ldots,x_t,z_{1}, \ldots , z_{n}$ in $S$.
\end{lem}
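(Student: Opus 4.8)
The plan is to prove the nontrivial implication by contradiction, reducing it to the structural criterion of Lemma~\ref{[,]} and then producing an explicit witness by tracking the row and column indices inside a principal factor, exactly in the spirit of the proof of Lemma~\ref{finite-nil-dif}. The forward implication is trivial: strong Mal'cev nilpotency gives the required equalities for all $z_1,\ldots,z_n$ in $S^1\supseteq S$, hence a fortiori for all of them in $S$.

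For the converse, suppose $S$ satisfies, for some positive integer $n$, the equalities $\lambda_{n,1}(x_1,\ldots,x_t;z_1,\ldots,z_n)=\cdots=\lambda_{n,t}(x_1,\ldots,x_t;z_1,\ldots,z_n)$ for every $t$ and all $x_i,z_i\in S$, and assume towards a contradiction that $S$ is not strongly Mal'cev nilpotent. First I would specialise the hypothesis to $t=2$: since $\lambda_{n,1}=\lambda_n$ and $\lambda_{n,2}=\rho_n$ in that case, this is precisely the condition of Lemma~\ref{finite-nil-dif}, so $S$ is Mal'cev nilpotent and hence $S\in\mathsf{MN}\subseteq\mathsf{BG}_{nil}$. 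Now Lemma~\ref{[,]} applies and yields ideals $B\subsetneqq A$ of $S$ with $A/B$ isomorphic to an inverse Rees matrix semigroup $M=\mathcal{M}^{0}(G,q,q;I_q)$, an integer $t>1$, elements $y_i=(g_i;\alpha_i,\beta_i)\in M$ with $\abs{\{\alpha_1,\ldots,\alpha_t\}}=t$, and elements $v_1,\ldots,v_t\in S\setminus B$ with $[\beta_1,\alpha_{1+i\pmod t};\ldots;\beta_t,\alpha_{t+i\pmod t}]\sqsubseteq\Gamma(v_i)$ for $1\leq i\leq t$, where $\Gamma$ is an ${\mathcal L}_M$-representation of $S/B$.

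Next I would fix, for each $i$, an element $\widetilde y_i\in A\setminus B$ mapping to $y_i$ under $A/B\cong M$, and set $z_k:=v_{\sigma(k)}$ for $k\geq 1$, where $\sigma(k)\in\{1,\ldots,t\}$ is the residue of $k$ modulo $t$; these lie in $S\setminus B\subseteq S$. The core of the argument is the claim, proved by induction on $n\geq 0$, that $\lambda_{n,i}(\widetilde y_1,\ldots,\widetilde y_t;z_1,\ldots,z_n)$ lies in $A\setminus B$ and its image in $M$ has the form $(\ast;\alpha_i,\beta_j)$ with $j\equiv i-n\pmod t$; the case $n=0$ is just $\widetilde y_i\mapsto y_i$. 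For the inductive step one expands $\lambda_{n+1,i}=\lambda_{n,i}\,z_{n+1}\,\lambda_{n,i+1}\,z_{n+1}\cdots z_{n+1}\,\lambda_{n,i-1}$ and multiplies the factors from left to right inside $M$: by the choice $z_{n+1}=v_{\sigma(n+1)}$ and the displayed $\sqsubseteq$-condition one has $\Gamma(z_{n+1})(\beta_m)=\alpha_{m+n+1\pmod t}$, so multiplying a partial product $(\ast;\alpha_i,\beta_m)$ by $z_{n+1}$ gives $(\ast;\alpha_i,\alpha_{m+n+1\pmod t})$, whose column index matches the row index of the next factor $\lambda_{n,k}$ supplied by the induction hypothesis; since $P=I_q$ the relevant sandwich entry equals $1_G$, so the product does not collapse to $\theta$ and the row index stays $\alpha_i$ throughout. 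Running through all $t$ of the factors $\lambda_{n,k}$ shifts the column index from $\beta_{i-n\pmod t}$ to $\beta_{i-(n+1)\pmod t}$, which completes the induction.

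Finally, applying the claim with the integer $n$ from the hypothesis, the elements $\lambda_{n,1},\ldots,\lambda_{n,t}$ evaluated at $(\widetilde y_1,\ldots,\widetilde y_t;z_1,\ldots,z_n)$ lie in pairwise distinct ${\mathcal R}$-classes of $M$ (their row indices $\alpha_1,\ldots,\alpha_t$ are pairwise distinct), hence are pairwise distinct elements of $S$; in particular $\lambda_{n,1}\neq\lambda_{n,2}$, contradicting the hypothesis applied to $t$, $\widetilde y_1,\ldots,\widetilde y_t$ and $z_1,\ldots,z_n\in S$. This forces $S$ to be strongly Mal'cev nilpotent. I expect the main obstacle to be the bookkeeping in the inductive step: one must verify at every level that the iterated products of Rees matrix elements stay outside $B$, and this works only because the chosen $z_k$ realise the cyclic shift dictated by the $\Gamma(v_i)$ and because $M$ has identity sandwich matrix; everything else is formal.
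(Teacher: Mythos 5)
Your proof is correct and follows the same skeleton as the paper's: the forward direction is immediate, and for the converse both arguments first dispose of the case $S\notin\mathsf{BG}_{nil}$ via Lemma~\ref{finite-nil-dif} (you do this by specialising to $t=2$; the paper argues by contraposition — it is the same step), and then invoke Lemma~\ref{[,]} to extract the structural data $(M,t,y_i,v_i)$ and build an explicit tuple of elements of $S$ falsifying the identity at level $n$. The only divergence is in the choice of witness: the paper first reduces to a single element $w$ with $(\alpha_1,\ldots,\alpha_t)\subseteq\Gamma(w)$ and evaluates the $\lambda_{l,i}$ at the diagonal idempotents $(1_G;\alpha_i,\alpha_i)$ with the periodic sequence $w,w^2,\ldots,w^t,w,\ldots$, whereas you keep the lifts of the $y_i$ themselves and feed in the periodic sequence $v_{\sigma(1)},v_{\sigma(2)},\ldots$. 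Your index bookkeeping — $\lambda_{n,i}\mapsto(\ast;\alpha_i,\beta_{i-n\bmod t})$, with every intermediate product staying outside $B$ because the sandwich matrix is $I_q$ and the column produced by $\Gamma(v_{\sigma(n+1)})$ always matches the row of the next factor — checks out, and in particular the $t$ values at level $n$ sit in distinct $\mathcal R$-classes, giving the contradiction. Your variant has the mild advantage of using Lemma~\ref{[,]} exactly as stated, without the intermediate step of manufacturing a single cycling element, so it is if anything more self-contained than the paper's version.
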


\begin{proof}
Suppose that there exists a finite semigroup $S$ such that $S$ satisfies the condition of the lemma and $S$ is not strongly Mal'cev nilpotent. 

If $S\not\in \mathsf{BG}_{nil}$, then $S\not\in\mathsf{MN}$ and, thus, by Lemma~\ref{finite-nil-dif}, $S$ does not satisfy the condition of the lemma, a contradiction.

Now, suppose that $S\in \mathsf{BG}_{nil}$. Since $S\not\in\mathsf{SMN}$ and $S$ satisfies the condition of the lemma, by Lemma~\ref{[,]}, there exist 
a regular ${\mathcal J}$-class $M=\mathcal{M}^0(G,n,n;I_n)\setminus \{\theta\}$ of $S$, a positive integer $t>1$, elements 
$y_i=(g_i;\alpha_i,\beta_i)\in M$, for every $1\leq i\leq t$
and an element 
$w\in S^{1}$
such that $\abs{\{\alpha_1,\ldots,\alpha_t\}}=t$ and
$\lambda_{2,i}(y_1,\ldots,y_t;1,w)\in M$ ($1\leq i\leq t$). 
As $\lambda_{1,1},\ldots,\lambda_{1,t}\in M$, we have $\beta_i=\alpha_{i+1}$ $(1\leq i\leq t-1)$ and $\beta_t=\alpha_1$. 
Hence, we have $(\alpha_1,\ldots,\alpha_t)\subseteq \Gamma(w)$ when $\Gamma$ is an ${\mathcal L}_{M}$-representation of $M$. Since 
$\abs{\{\alpha_1,\ldots,\alpha_t\}}=t$ and $t>1$, we have $w\neq 1$.
Now, as $(\alpha_1,\ldots,\alpha_t)\subseteq \Gamma(w)$, we have
\begin{align*}
&\lambda_{l,i}((1_G;\alpha_1,\alpha_1),\ldots,(1_G;\alpha_t,\alpha_t);w,w^2,\ldots,w^t,w,\ldots,w^t,\ldots)=\\
&(k_l;\alpha_i,\alpha_{i-l\mod t})\quad (1\leq i\leq t,0\leq l),
\end{align*}
for some element $k_l\in G$ and, thus,
\begin{align*}
&\lambda_{l,i}((1_G;\alpha_1,\alpha_1),\ldots,(1_G;\alpha_t,\alpha_t);w,w^2,\ldots,w^t,w,\ldots,w^t,\ldots) \neq\\
&\lambda_{l,i'}((1_G;\alpha_1,\alpha_1),\ldots,(1_G;\alpha_t,\alpha_t);w,w^2,\ldots,w^t,w,\ldots,w^t,\ldots),
\end{align*} for all integers $1\leq i<i'\leq t$ and $0\leq l$. Since $w\neq 1$, there is a contradiction with the assumption. 
\end{proof}

\begin{lem} \label{finite-nilpotent'}
A finite semigroup $S$ is not strongly Mal'cev nilpotent if and only if there exist positive integers $t>1,m$, pairwise distinct elements $a_1,\ldots,a_t$ in $S$ and elements 
$w_{1}, w_{2}, \ldots, w_{m}$ in $S$ such that $$a_i = \lambda_{m,i}(a_1,\ldots,a_t;w_{1},\ldots, w_{m}),$$ for all $1\leq i\leq t$.
\end{lem}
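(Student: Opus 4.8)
I would prove the two implications separately; the ``if'' part is short and the ``only if'' part carries all the weight, splitting according to whether $S\in\mathsf{BG}_{nil}$.

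For the ``if'' part, suppose $t>1$, $m$, pairwise distinct $a_1,\dots,a_t$ and $w_1,\dots,w_m\in S$ are as stated. The recursion defining the $\lambda_{l,i}$ immediately yields the composition rule $\lambda_{l+l',i}(\mathbf x;z_1,\dots,z_{l+l'})=\lambda_{l',i}\big((\lambda_{l,j}(\mathbf x;z_1,\dots,z_l))_j;z_{l+1},\dots,z_{l+l'}\big)$, so since $(a_1,\dots,a_t)$ is a fixed point of $\mathbf x\mapsto(\lambda_{m,i}(\mathbf x;w_1,\dots,w_m))_i$ we obtain $a_i=\lambda_{km,i}(a_1,\dots,a_t;w_1,\dots,w_m,\dots,w_1,\dots,w_m)$ for every $k\ge1$, the word being $k$ copies of $w_1\cdots w_m$. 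If $S$ were strongly Mal'cev nilpotent, of class $n_0$, then the same recursion would force $\lambda_{n,1}=\cdots=\lambda_{n,r}$ for all $r$, all $n\ge n_0$ and all arguments --- once the $\lambda_{n,j}$ coincide, $\lambda_{n+1,i}$ is a product of $r$ copies of the common value interleaved with $r-1$ copies of $z_{n+1}$, hence independent of $i$ --- and choosing $k$ with $km\ge n_0$ would give $a_1=a_2$, a contradiction.

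For the ``only if'' part, assume $S$ is not strongly Mal'cev nilpotent. If $S\notin\mathsf{BG}_{nil}$ then $S\notin\mathsf{MN}$, and Lemma~\ref{finite-nilpotent-2} gives $m$, distinct $x,y\in S$ and $w_1,\dots,w_m\in S$ with $x=\lambda_m(x,y,w_1,\dots,w_m)$ and $y=\rho_m(x,y,w_1,\dots,w_m)$; since $\lambda_m=\lambda_{m,1}$ and $\rho_m=\lambda_{m,2}$ for $t=2$, the data $t=2$, $a_1=x$, $a_2=y$, $w_1,\dots,w_m$ is as required. So suppose $S\in\mathsf{BG}_{nil}$. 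By Lemma~\ref{[,]} there are ideals $B\subsetneq A$ with $A/B\cong M=\mathcal{M}^{0}(G,q,q;I_q)$, an integer $t>1$, elements $y_i=(g_i;\alpha_i,\beta_i)\in M$ with $\abs{\{\alpha_1,\dots,\alpha_t\}}=t$, and elements $v_1,\dots,v_t\in S\setminus B$ with $[\beta_1,\alpha_{1+i\bmod t};\dots;\beta_t,\alpha_{t+i\bmod t}]\sqsubseteq\Gamma(v_i)$ for $1\le i\le t$, where $\Gamma$ is an ${\mathcal L}_M$-representation of $S/B$. The plan is to manufacture an honest fixed-point configuration out of this by iterating a ``shift''. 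Letting the sequence $z_1,z_2,\dots$ cycle through $v_1,\dots,v_t,v_1,\dots$ (so $z_l=v_r$ with $r\equiv l\pmod t$), I would prove by induction on $l$ that $\lambda_{l,i}(y_1,\dots,y_t;z_1,\dots,z_l)=(h_i^{(l)};\alpha_i,\beta_{i-l\bmod t})$ for suitable $h_i^{(l)}\in G$, the bookkeeping of the column index using precisely that $\Gamma(v_i)(\beta_j)=\alpha_{j+i\bmod t}$ and that the sandwich matrix is $I_q$; this is an identity in $S$ because at each step all the products in question stay inside the $\mathcal{J}$-class $A\setminus B$ and are therefore read off from their Rees coordinates through the principal-factor isomorphism (via $\Gamma$ and $\Psi$). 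For $l$ a multiple of $t$ the column index returns to $\beta_i$, so the self-map $\varphi\colon(x_1,\dots,x_t)\mapsto(\lambda_{t,i}(x_1,\dots,x_t;v_1,\dots,v_t))_i$ sends the finite set $C=\{((h_i;\alpha_i,\beta_i))_{i=1}^{t}:h_i\in G\}\subseteq S^t$ into itself (the computation used only the row and column indices of its arguments). Picking a periodic point $(a_1,\dots,a_t)\in C$ of $\varphi$, say with $\varphi^N$ fixing it, the composition rule gives $a_i=\lambda_{Nt,i}(a_1,\dots,a_t;v_1,\dots,v_t,\dots,v_1,\dots,v_t)$ for all $i$ (word $=N$ copies of $v_1\cdots v_t$); the $a_i$ are pairwise distinct because the $\alpha_i$ are, $t>1$, and all the $v_j$ lie in $S$, so $t$, $m=Nt$, $(a_i)$ and this word are as required.

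The step I expect to be the main obstacle is the induction in the last case: one has to be sure the ``shift'' equalities $\lambda_{l,i}(\mathbf y;z_1,\dots,z_l)=(h_i^{(l)};\alpha_i,\beta_{i-l})$ hold literally in $S$, not merely modulo $B$, which is exactly why it is essential that \emph{every} intermediate product remain inside the single $\mathcal{J}$-class $A\setminus B$ (so that each is faithfully described by its Rees coordinates) --- this is where conditions (3) and (4) of Lemma~\ref{[,]} and the identity sandwich matrix are really used. Everything else --- the composition rule, the collapse argument in the ``if'' part, and the combinatorial selection of the structural data, which is already absorbed into Lemma~\ref{[,]} --- is routine.
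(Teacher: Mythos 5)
Your proof is correct and follows essentially the same route as the paper: the ``only if'' direction splits on membership in $\mathsf{BG}_{nil}$, handling the negative case via the Mal'cev analogue (Lemma~\ref{finite-nilpotent-2}) and the positive case via Lemma~\ref{[,]} followed by an explicit Rees-coordinate iteration and a pigeonhole/periodicity argument, which is exactly the argument the paper invokes by pointing to the proof of Lemma~\ref{finite-nilpotent-2'}. Your write-up actually supplies more detail than the paper does (the composition rule, the cyclic choice $z_l=v_{l\bmod t}$, and the verification that all intermediate products stay in $A\setminus B$), but the underlying method is the same.
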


\begin{proof}
If $S\in \mathsf{BG}_{nil}$ and $S\not\in \mathsf{SMN}$, as we argue in the proof of Lemma~\ref{finite-nilpotent-2'},
by Lemma~\ref{[,]}, the result follows.
Now, suppose that $S\not\in \mathsf{BG}_{nil}$. Hence, we have $S\not\in \mathsf{MN}$ and the result follows from Lemma~\ref{finite-nil-dif}.
\end{proof}

The Rees matrix semigroup $M=\mathcal{M}^{0}(G, n,m;P)$ is Mal'cev nilpotent if and only if $G$ is nilpotent and $M$ is inverse
\cite[Lemma 2.1]{Jes-Okn}. Now, by Lemma~\ref{finite-nilpotent'}, we can present a similar result for the strong Mal'cev nilpotency.

\begin{lem} \label{Rees'}
The finite Rees matrix semigroup $M=\mathcal{M}^{0}(G, n,m;P)$ is strongly Mal'cev nilpotent if and only if $G$ is nilpotent and $M$ is inverse.
\end{lem}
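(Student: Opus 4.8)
The plan is to treat the two implications separately. The ``only if'' direction is essentially immediate: if $M$ is strongly Mal'cev nilpotent then it is Mal'cev nilpotent, since $\mathsf{SMN}\subseteq\mathsf{MN}$, so \cite[Lemma 2.1]{Jes-Okn} forces $n=m$, $P=I_n$ and $G$ nilpotent, which is just a restatement of ``$G$ nilpotent and $M$ inverse''. All of the content is therefore in the ``if'' direction, and I would establish it by proving the contrapositive through Lemma~\ref{[,]}.

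Assume then that $G$ is nilpotent and $M$ is inverse; after relabeling we may take $M=\mathcal{M}^0(G,n,n;I_n)$. Since $M$ is inverse and each of its subgroups is a subgroup of the nilpotent group $G$, we have $M\in\mathsf{BG}_{nil}$ (equivalently $M\in\mathsf{MN}\subseteq\mathsf{BG}_{nil}$ by \cite[Lemma 2.1]{Jes-Okn}), so Lemma~\ref{[,]} is applicable with $S=M$. Suppose, for contradiction, that $M\notin\mathsf{SMN}$. Then Lemma~\ref{[,]} produces ideals $B\subsetneqq A$ of $M$, an inverse Rees matrix semigroup $M'=\mathcal{M}^0(G',q,q;I_q)$ with $A/B\cong M'$, an integer $t>1$, and elements $y_i=(g_i;\alpha_i,\beta_i)\in M'$ and $v_i\in M\setminus B$ (for $1\le i\le t$) with $\abs{\{\alpha_1,\ldots,\alpha_t\}}=t$ and $[\beta_1,\alpha_{1+i\pmod t};\ldots;\beta_t,\alpha_{t+i\pmod t}]\sqsubseteq\Gamma(v_i)$, where $\Gamma$ is the ${\mathcal L}_{M'}$-representation of $M/B$.

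The key point is that a completely $0$-simple semigroup has no ideals other than $\{\theta\}$ and itself, so since $A/B\cong M'$ is nontrivial we are forced to have $A=M$ and $B=\{\theta\}$; hence $M/B$ may be identified with $M$, so $M'\cong M$, $q=n$, and $\Gamma$ with the ${\mathcal L}_M$-representation of $M$. Writing $v_i=(h_i;\gamma_i,\delta_i)\in M\setminus\{\theta\}$, the description of $\Gamma$ recalled in Section~\ref{sec:prelims} gives $\Gamma(v_i)=(\gamma_i,\delta_i,\theta)$ or $\Gamma(v_i)=(\gamma_i)$, so $\Gamma(v_i)$ maps exactly one element of $\{1,\ldots,n\}$ to a value different from $\theta$. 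On the other hand, the displayed orbit condition says $\Gamma(v_i)(\beta_k)=\alpha_{k+i\pmod t}\in\{1,\ldots,n\}$ for $k=1,\ldots,t$; as $\abs{\{\alpha_1,\ldots,\alpha_t\}}=t$ these $t$ values are pairwise distinct, hence $\beta_1,\ldots,\beta_t$ are pairwise distinct too and $\Gamma(v_i)$ carries all $t\ge 2$ of them off $\theta$ --- contradicting the previous sentence. Therefore $M\in\mathsf{SMN}$.

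I expect the main obstacle to be purely bookkeeping: checking the hypothesis $M\in\mathsf{BG}_{nil}$ needed to invoke Lemma~\ref{[,]}, and correctly transporting the data provided by the lemma across the isomorphism $A/B\cong M'$, so that the orbit $\Gamma(v_i)$ genuinely refers to the transformation of the $\mathcal{L}$-classes of $M$ induced by $v_i\in M$. Once that is in place the contradiction is instantaneous from $t>1$. A less slick alternative would apply Lemma~\ref{finite-nilpotent'} directly to $S=M$ and track the first and last Rees coordinates of $\lambda_{m,i}(a_1,\ldots,a_t;w_1,\ldots,w_m)$ together with the nonzero-product constraints in $M$; this works but requires a somewhat delicate analysis of the index sequences modulo $\gcd(m,t)$, so I would not take it.
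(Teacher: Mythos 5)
Your proof is correct, but it follows a different route from the paper's. The ``only if'' direction is identical (reduce to $\mathsf{SMN}\subseteq\mathsf{MN}$ and cite \cite[Lemma 2.1]{Jes-Okn}). For the ``if'' direction, you invoke Lemma~\ref{[,]}, whose conclusion already packages the condition $\abs{\{\alpha_1,\ldots,\alpha_t\}}=t$ with $t>1$, and then get an instant contradiction from the fact that a nonzero element of $\mathcal{M}^0(G,n,n;I_n)$ acts on the $\mathcal{L}$-classes by moving exactly one point off $\theta$. The paper instead applies Lemma~\ref{finite-nilpotent'} directly to $M$: it obtains pairwise distinct $a_i=(g_i;\alpha_i,\beta_i)$ fixed by the $\lambda_{m,i}$ with all $w_j\in M$, observes by the same single-point-support argument that this forces $\beta_1=\cdots=\beta_t$ and $\alpha_1=\cdots=\alpha_t$, and then projects the fixed-point equations onto the group coordinate to contradict the nilpotency of $G$ via Lemma~\ref{nil-group}. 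So the degenerate case (all Rees indices equal, distinct group entries) that the paper must treat explicitly is, in your version, absorbed into the proof of Lemma~\ref{[,]} — which itself uses Lemma~\ref{nil-group} to exclude exactly that configuration. Your approach is slightly slicker at the cost of using a heavier lemma; the paper's is more self-contained and not actually delicate (no analysis modulo $\gcd(m,t)$ is needed, contrary to what you anticipated, precisely because $\Gamma(w_j)$ has singleton support). One small bookkeeping remark: from $A/B\cong M'$ nontrivial and $0$-simplicity you may only conclude $A=M$ and $B\in\{\emptyset,\{\theta\}\}$, but this is harmless since either way $v_i$ is a nonzero element of $M$, which is all your contradiction needs.
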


\begin{proof}
If $M$ is strongly Mal'cev nilpotent then $M$ is Mal'cev nilpotent and, thus, by \cite[Lemma 2.1]{Jes-Okn}, the result follows. Now, suppose that $G$ is nilpotent, $M$ is inverse, and $S$ is not strongly Mal'cev nilpotent. Then, by Lemma~\ref{finite-nilpotent'}, there exist positive integers $t>1,m$, pairwise distinct elements $a_i=(g_i;\alpha_i,\beta_i)\in M$ ($1\leq i\leq t$) and elements 
$w_{1}, \ldots, w_{m}\in M$ such that $$a_i = \lambda_{m,i}(a_1,\ldots,a_t;w_{1},\ldots, w_{m})\quad (1\leq i\leq t).$$  Since $M$ is inverse and
$a_i = \lambda_{m,i}(a_1,\ldots,a_t;w_{1},\ldots, w_{m})$ ($1\leq i\leq t$), we have
$$[\beta_1,\alpha_{1+j\pmod t};\ldots;\beta_t,\alpha_{t+j\pmod t}]\sqsubseteq \Gamma(w_j)\quad (1\leq j\leq m)$$  where $\Gamma$ is an ${\mathcal L}_M$-representation of $M$. Now, as $w_{1},\ldots, w_{m}\in M$, we have
$$\beta_1=\cdots=\beta_t\mbox{ and }\alpha_1=\cdots=\alpha_t.$$
Therefore, we have 
$$g_i= \lambda_{m,i}(g_1,\ldots,g_t;\Psi(w_{1})(\beta_1),\ldots, \Psi(w_{m})(\beta_1))\quad (1\leq i \leq t).$$ Then $G$ is not nilpotent by Lemma~\ref{nil-group} in contradiction with the initial assumption.
\end{proof}

\section{Sch\"utzenberger graphs}\label{Schutz}

Let $M$ be a finite $A$-generated semigroup.
If $X$ is an $\mathcal{R}$-class of $M$, then the Sch\"utzenberger graph (with respect to $A$) of $X$, denoted $Sch_A(X)$, is the full subgraph of the  right Cayley graph of $M$ with set of vertices $X$. 
Dually, for an $\mathcal{L}$-class $Y$, the left Sch\"utzenberger graph of $Y$, denoted $Sch_A^{\rho}(Y)$, is the full subgraph of the left Cayley graph of $M$ with vertices $Y$. 
An $A$-graph $\Gamma$ is inverse, if and only if for every $w \in (A\cup A^{-1})^{\star}$ there is at run labeled $w$ from any vertex $q$ in the graph $\Gamma$ (for more detail see \cite{Ste01}). It is clear that if $M$ is a finite Mal'cev nilpotent semigroup, then $Sch_A(X)$ is inverse, for every regular $\mathcal{R}$-class $X$, and $Sch_A^{\rho}(Y)$ is inverse, for every regular $\mathcal{L}$-class $Y$. 

Let $X$ be an $\mathcal{R}$-class of $M$ and let $$L_{\beta,\alpha,X}=\{w\in A^+\mid w\mbox{ run from } \beta \mbox{ to } \alpha\},$$
for every $\alpha,\beta\in V(Sch_A(X))$. 
We define the following notions for the $\mathcal{R}$-class $X$:
\begin{enumerate}
\item $X$ is ($\mathcal{H}$-nilpotent) nilpotent in $M$, if there exist vertices $\alpha,\alpha',\beta,\beta'$ in $V(Sch_A(X))$ such that $\alpha\neq\alpha'$ ($\alpha,\alpha'$ are not in the same $\mathcal{H}$-class) and $L_{\beta,\alpha,X}\cap L_{\beta',\alpha',X}\neq\emptyset$, then $L_{\beta',\alpha,X}\cap L_{\beta,\alpha',X}=\emptyset$.
\item $X$ is ($\mathcal{H}$-strongly nilpotent) strongly nilpotent in $M$, if there exist vertices $\alpha_1,\ldots,\alpha_n,\beta_1,\ldots,\beta_n$ in $V(Sch_A(X))$ such that there exist integers $1\leq i,j\leq n$ with $\alpha_i\neq\alpha_j$ ($\alpha_i,\alpha_j$ are in distinct $\mathcal{H}$-classes) and $L_{\beta_1,\alpha_1,X}\cap\cdots \cap L_{\beta_n,\alpha_n,X}\neq\emptyset,$ then there exists an integer $k$ such that  $L_{\beta_1,\alpha_{1+k\pmod n},X}\cap\cdots \cap L_{\beta_n,\alpha_{n+k\pmod n},X}=\emptyset$.
\end{enumerate} 

The following proposition can be seen as a criterion to detect non Mal'cev nilpotent semigroups 
by the Sch\"utzenberger graphs of its regular $\mathcal{R}$-classes.

\begin{prop}\label{Alg1}
Let $S$ be an $A$-generated finite semigroup with the following conditions:
\begin{enumerate}
\item the semigroup $S$ is in the pseudovariety $\mathsf{BG}_{nil}$;
\item if $G$ is a subgroup of $S$ and $g\in G$ with $g\neq 1_G$, then $g^2\neq 1_G$. 
\end{enumerate}
If there exists a regular $\mathcal{R}$-class $X$ of $S$ such that the subset $X$ is not nilpotent in $S$, then the semigroup $S$ is not Mal'cev nilpotent. 
\end{prop}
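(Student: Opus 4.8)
The plan is to translate the combinatorial ``non-nilpotent'' picture coming from a non-nilpotent Sch\"utzenberger graph into the algebraic configuration of Lemma~\ref{[,]'}, which (since $S\in\mathsf{BG}_{nil}$) is exactly what is needed to conclude that $S$ is not Mal'cev nilpotent. Concretely, suppose $X$ is a regular $\mathcal{R}$-class of $S$ that is not nilpotent in $S$. Unwinding the definition of ``not nilpotent in $M$'', there are vertices $\alpha,\alpha',\beta,\beta'\in V(Sch_A(X))$ with $\alpha\neq\alpha'$ such that both $L_{\beta,\alpha,X}\cap L_{\beta',\alpha',X}\neq\emptyset$ and $L_{\beta',\alpha,X}\cap L_{\beta,\alpha',X}\neq\emptyset$. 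Pick $w\in L_{\beta,\alpha,X}\cap L_{\beta',\alpha',X}$ and $v\in L_{\beta',\alpha,X}\cap L_{\beta,\alpha',X}$, viewed as elements of $S$.

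Next I would set up the Rees coordinates. Let $A$ be the regular $\mathcal{J}$-class containing $X$ and let $B$ be the ideal of elements strictly $\mathcal{J}$-below it, so that $A/B\cong M=\mathcal{M}^0(G,q,q;I_q)$ is an inverse completely $0$-simple semigroup (using hypothesis (1), $S\in\mathsf{BG}_{nil}$). The vertices of $Sch_A(X)$ are the elements of $X$, one per $\mathcal{L}$-class within the $\mathcal{R}$-class, and thus correspond to the columns of $M$. Reading $\beta,\beta'$ as columns and $\alpha,\alpha'$ as the columns reached, the statements $w\in L_{\beta,\alpha,X}$, $w\in L_{\beta',\alpha',X}$, $v\in L_{\beta',\alpha,X}$, $v\in L_{\beta,\alpha',X}$ translate (via the $\mathcal{L}_M$-representation $\Gamma$ of $S/B$) into $[\beta,\alpha;\beta',\alpha']\sqsubseteq\Gamma(w)$ and $[\beta',\alpha;\beta,\alpha']\sqsubseteq\Gamma(v)$, after relabelling $\beta',\alpha'$ so the indices match the statement of Lemma~\ref{[,]'}. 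Fix any $g,g'\in G$, say $g=g'=1_G$, and set $x=(1_G;\alpha,\beta)$, $y=(1_G;\alpha',\beta')\in M$; since $\alpha\neq\alpha'$, these are distinct and all three conditions of Lemma~\ref{[,]'} hold (note $w,v\notin B$ because they act non-trivially on $M$). Hence $S$ is not Mal'cev nilpotent.

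The one genuine subtlety, and where hypothesis (2) enters, is making sure that after passing to Rees coordinates the two words $w$ and $v$ really are distinct group-coordinate-compatible witnesses and that we have not accidentally collapsed $x$ and $y$ or the two $\sqsubseteq$ relations into a degenerate case. The condition that no non-identity element of a subgroup of $S$ squares to the identity rules out the ``$t=2$ with a sign flip'' degeneracy in which $w$ and $v$ would coincide as elements of $M$ up to a central involution, which would prevent the configuration of Lemma~\ref{[,]'} (where $w$ and $v$ must genuinely give the two opposite matchings) from being realised; it guarantees the orbit structure forced by $w$ and $v$ on the relevant columns is the honest $2$-cycle pattern rather than a fixed-point pattern hiding an order-$2$ group element. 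I would spell this out by examining $\Psi(w)$ and $\Psi(v)$ on the columns $\beta,\beta'$ and checking that hypothesis (2) prevents $w^2$ (or $v^2$) from acting as a non-trivial group element over a fixed column while the induced transformation is the identity, which is precisely the obstruction to reading off conditions (2)--(3) of Lemma~\ref{[,]'}. Apart from this check, everything is a routine dictionary translation between the Sch\"utzenberger-graph language of Section~\ref{Schutz} and the Rees-matrix language of Section~\ref{sec:prelims}. Once Lemma~\ref{[,]'} applies, the conclusion that $S\notin\mathsf{MN}$ is immediate.
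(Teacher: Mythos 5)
Your overall strategy---translating the failure of nilpotency of $Sch_A(X)$ into the configuration of Lemma~\ref{[,]'}---is the same as the paper's, but there is a genuine gap exactly at the point you dismiss as ``routine dictionary translation''. The vertices of $Sch_A(X)$ are the \emph{elements} of the $\mathcal{R}$-class $X$, not its $\mathcal{L}$-classes: each $\mathcal{H}$-class of $X$ contributes $\abs{G}$ vertices. Writing the vertices in Rees coordinates as $\alpha=(g_1;a,b_1)$, $\alpha'=(g_2;a,b_2)$, $\beta=(g_3;a,b_3)$, $\beta'=(g_4;a,b_4)$ (all with the same row index $a$ since they lie in one $\mathcal{R}$-class), the hypothesis $\alpha\neq\alpha'$ gives only ``$b_1\neq b_2$ \emph{or} $g_1\neq g_2$''. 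Your step ``set $x=(1_G;\alpha,\beta)$, $y=(1_G;\alpha',\beta')$; since $\alpha\neq\alpha'$ these are distinct and all three conditions of Lemma~\ref{[,]'} hold'' treats $\alpha,\alpha'$ as column indices and silently assumes the first alternative; Lemma~\ref{[,]'} genuinely requires distinct first coordinates for $x$ and $y$, so it does not apply when $b_1=b_2$. (This is also why Proposition~\ref{Alg1} is not subsumed by Proposition~\ref{Alg3}(1): the latter assumes $\alpha,\alpha'$ lie in distinct $\mathcal{H}$-classes and needs no hypothesis on involutions. If your translation were correct, hypothesis (2) would be superfluous.)

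Hypothesis (2) exists precisely to kill the remaining case $b_1=b_2$ (hence $g_1\neq g_2$), and your account of its role---a degeneracy in which ``$w$ and $v$ coincide up to a central involution'', or in which $w^2$ acts as a nontrivial group element over a fixed column---is not the right statement and would not close the gap: the possible collapse concerns $\alpha$ and $\alpha'$, not $w$ and $v$. The correct computation (which is what the paper does) is: the four run conditions give $(g_3;a,b_3)m_1=(g_1;a,b_1)$, $(g_4;a,b_4)m_1=(g_2;a,b_2)$, $(g_4;a,b_4)m_2=(g_1;a,b_1)$, $(g_3;a,b_3)m_2=(g_2;a,b_2)$; reading off the group coordinates via $\Psi$ yields elements $g_{m_1},g_{m_2}\in G$ with $g_3g_{m_1}=g_1$, $g_4g_{m_1}=g_2$, $g_4g_{m_2}=g_1$, $g_3g_{m_2}=g_2$. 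Eliminating $g_{m_1}$ and $g_{m_2}$ gives $g_1g_2^{-1}=g_3g_4^{-1}=g_2g_1^{-1}$, hence $(g_2g_1^{-1})^2=1_G$ with $g_2g_1^{-1}\neq 1_G$, contradicting (2). Until you carry out this case split and computation, the proof is incomplete; the case $b_1\neq b_2$ is handled by Lemma~\ref{[,]'} exactly as you describe.
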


\begin{proof}
Suppose that there exist a regular $\mathcal{R}$-class $X$ of $S$ and vertices $\alpha,\alpha',\beta,$ and $\beta'$ in $V(Sch_A(X))$ such that $\alpha\neq\alpha'$ and $L_{\beta,\alpha,X}\cap L_{\beta',\alpha',X},L_{\beta',\alpha,X}\cap L_{\beta,\alpha',X}\neq\emptyset$. Let $J$ be a $\mathcal{J}$-class of $X$. There exist an integer $n$ and a finite nilpotent group $G$ such that $J\cup\{\theta\}\cong (M=)\mathcal{M}^{0}(G,n,n;I_n)$. Then, we have $$\alpha=(g_1;a,b_1), \alpha'=(g_2;a,b_2),\beta=(g_3;a,b_3),\mbox{ and }\beta'=(g_4;a,b_4)$$ for some integers $1\leq a,b_1,b_2,b_3,b_4\leq n$ and elements $g_1,g_2,g_3,g_4\in G$. Therefore, there exist elements $m_1,m_2\in S$ such that $[b_3,b_1;b_4,b_2]\sqsubseteq \Gamma(m_1)$ and $[b_4,b_1;b_3,b_2]\sqsubseteq \Gamma(m_2)$ when $\Gamma$ is an ${\mathcal L}_{M}$-representation of $J$. If $b_1\neq b_2$ then, by Lemma~\ref{[,]'}, the semigroup $S$ is not Mal'cev nilpotent. Suppose that $b_1=b_2$. Hence, $g_1\neq g_2$.
There exist elements $g_{m_1},g_{m_2}$ such that $g_3g_{m_1}=g_1$, $g_4g_{m_1}=g_2$, $g_4g_{m_2}=g_1$ and $g_3g_{m_2}=g_2$. 
%
%
It follows that $g_3g_4^{-1}=g_1g_2^{-1}$ and $g_3g_4^{-1}=g_2g_1^{-1}$ and, thus, $(g_2g_1^{-1})^2=1_G$. A contradiction with the assumption.
\end{proof}

The following propositions can be seen as criteria to detect non Mal'cev nilpotent semigroups and non strongly Mal'cev nilpotent semigroups that are obtained at once from Lemmas~\ref{[,]'}, \ref{[,]1} and \ref{[,]}.

\begin{prop}\label{Alg3}
Let $S$ be an $A$-generated semigroup in the pseudovariety $\mathsf{BG}_{nil}$. The following conditions hold:
\begin{enumerate}
\item if there exists a regular $\mathcal{R}$-class $X$ of $S$ such that the subset $X$ is not $\mathcal{H}$-nilpotent in $S$, then the semigroup $S$ is not Mal'cev nilpotent. 
\item if there exists a regular $\mathcal{R}$-class $X$ of $S$ such that the subset $X$ is not $\mathcal{H}$-strongly nilpotent in $S$, then the semigroup $S$ is not strongly Mal'cev nilpotent.
\end{enumerate}
\end{prop}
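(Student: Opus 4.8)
The plan is to translate the combinatorial statements about Sch\"utzenberger graphs into the Rees-matrix conditions of Lemmas~\ref{[,]'}, \ref{[,]1} and~\ref{[,]}, and then quote those lemmas. I would begin by fixing a regular $\mathcal{R}$-class $X$ of $S$ witnessing the failure of $\mathcal{H}$-nilpotency (resp.\ of $\mathcal{H}$-strong nilpotency), taking $J$ to be the $\mathcal{J}$-class containing $X$, and choosing a principal series $S=S_1\supset\cdots\supset S_{s+1}=\emptyset$ of $S$ in which the principal factor of $J$ occurs, say $S_p/S_{p+1}\cong M$. Since $S\in\mathsf{BG}_{nil}$, the semigroup $M$ is an inverse completely $0$-simple semigroup $\mathcal{M}^{0}(G,q,q;I_q)$ with $G$ a finite nilpotent group, and all vertices of $Sch_A(X)$ lie in one $\mathcal{R}$-class of $M$, hence have the form $(g;a,b)$ for a single fixed first coordinate $a$. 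The dictionary I would set up is: two such vertices $(g;a,b)$ and $(g';a,b')$ lie in distinct $\mathcal{H}$-classes if and only if $b\neq b'$; and a word $w\in A^{+}$ lies in $L_{\beta,\alpha,X}$, where $\beta=(h;a,c)$ and $\alpha=(h';a,b)$, if and only if $\beta w=\alpha$ in $S$, which --- since right multiplication fixes the first coordinate and $\beta w\in J$ forces $w\notin S_{p+1}$ --- entails $w\notin S_{p+1}$ and $\Gamma(w)(c)=b$, where $\Gamma$ is the $\mathcal{L}_M$-representation of $S/S_{p+1}$ as in Lemma~\ref{[,]'}. In particular, any element of an intersection $\bigcap_{\ell}L_{\beta_\ell,\alpha_\ell,X}$ becomes a single element of $S\setminus S_{p+1}$ that realises, through $\Gamma$, all of the prescribed moves on second coordinates simultaneously.

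For statement~(1) I would argue as follows. Failure of $\mathcal{H}$-nilpotency supplies vertices $\alpha=(g_1;a,b_1)$, $\alpha'=(g_2;a,b_2)$, $\beta=(g_3;a,b_3)$, $\beta'=(g_4;a,b_4)$ with $b_1\neq b_2$, together with a word $w\in L_{\beta,\alpha,X}\cap L_{\beta',\alpha',X}$ and a word $v\in L_{\beta',\alpha,X}\cap L_{\beta,\alpha',X}$. By the dictionary, $w,v\in S\setminus S_{p+1}$, $[b_3,b_1;b_4,b_2]\sqsubseteq\Gamma(w)$ and $[b_4,b_1;b_3,b_2]\sqsubseteq\Gamma(v)$, and one checks that conditions~(1)--(3) of Lemma~\ref{[,]'} then hold with $A=S_p$, $B=S_{p+1}$, $M$, the above $w$ and $v$, and $x=(1_G;b_2,b_3)$, $y=(1_G;b_1,b_4)$ --- the inequality $b_1\neq b_2$ giving the ``$\alpha\neq\alpha'$'' demanded by that lemma. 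Hence $S$ is not Mal'cev nilpotent; this is exactly the reduction already performed in the proof of Proposition~\ref{Alg1}.

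For statement~(2) I would proceed similarly, invoking Lemmas~\ref{[,]1} and~\ref{[,]}. Failure of $\mathcal{H}$-strong nilpotency supplies vertices $\alpha_\ell=(g_\ell;a,b_\ell)$ and $\beta_\ell=(h_\ell;a,c_\ell)$, $1\leq\ell\leq n$, with $1<\abs{\{b_1,\ldots,b_n\}}$ (so $n>1$), and, for each $i$, a word $v_i\in\bigcap_{\ell}L_{\beta_\ell,\alpha_{\ell+i\pmod n},X}$; by the dictionary $v_i\in S\setminus S_{p+1}$ and $[c_1,b_{1+i\pmod n};\ldots;c_n,b_{n+i\pmod n}]\sqsubseteq\Gamma(v_i)$. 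If $b_1,\ldots,b_n$ are pairwise distinct, Lemma~\ref{[,]} applies at once with $A=S_p$, $B=S_{p+1}$, $t=n$ and $y_\ell=(1_G;b_\ell,c_\ell)$. If instead $1<\abs{\{b_1,\ldots,b_n\}}<n$, I would first apply Lemma~\ref{[,]1} (with its $\alpha_i,\beta_i$ taken to be our $b_i,c_i$) to get $t'$ with $t'>1$, $t'\mid n$, $\abs{\{b_1,\ldots,b_{t'}\}}=t'$ and $[c_1,b_{1+i\pmod{t'}};\ldots;c_{t'},b_{t'+i\pmod{t'}}]\sqsubseteq\Gamma(v_i)$ for $1\leq i\leq t'$, and then apply Lemma~\ref{[,]} with $t=t'$ and $y_\ell=(1_G;b_\ell,c_\ell)$, $1\leq\ell\leq t'$. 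Either way, $S$ is not strongly Mal'cev nilpotent. The step I expect to need the most care is the dictionary set up in the first paragraph: one must verify that the Sch\"utzenberger graph of $X$, built inside $S$, faithfully records the $\Gamma$-moves of the $\mathcal{L}_M$-representation of the quotient $S/S_{p+1}$, and that a run which stays in $X$ corresponds precisely to a product which stays in the $\mathcal{J}$-class $J$. Granting this, both parts are immediate from the cited lemmas, in agreement with the remark preceding the proposition.
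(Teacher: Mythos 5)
Your proposal is correct and follows exactly the route the paper intends: the paper gives no explicit proof, merely asserting that the proposition is "obtained at once" from Lemmas~\ref{[,]'}, \ref{[,]1} and \ref{[,]}, and your dictionary between runs in $Sch_A(X)$ and the $\Gamma$-conditions of those lemmas (including the use of Lemma~\ref{[,]1} when the $b_\ell$ are not pairwise distinct) is precisely the intended reduction, matching the explicit computation already carried out in the proof of Proposition~\ref{Alg1}.
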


We recall the pseudovariety $$\mathsf{BI} = \{S\in \mathsf{S}\mid S\mbox{~is block group and all subgroups of $S$ are trivial}\}$$ where $\mathsf{S}$ is all finite semigroups.

\begin{prop}\label{Alg4}
Let $S$ be an $A$-generated semigroup in the pseudovariety $\mathsf{BI}$. The following conditions hold:
\begin{enumerate}
\item the semigroup $S$ is Mal'cev nilpotent if and only if for every regular $\mathcal{R}$-class $X$ of $S$ the subset $X$ is nilpotent in $S$. 
\item the semigroup $S$ is strongly Mal'cev nilpotent if and only if for every regular $\mathcal{R}$-class $X$ of $S$ the subset $X$ is strongly nilpotent in $S$. 
\end{enumerate}
\end{prop}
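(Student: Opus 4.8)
The plan is to derive both equivalences from results already in hand: the ``only if'' directions from Proposition~\ref{Alg3}, and the ``if'' directions, in contrapositive form, from Lemmas~\ref{[,]'} and~\ref{[,]}. Two preliminary remarks carry most of the weight. First, $\mathsf{BI}\subseteq\mathsf{BG}_{nil}$, since trivial groups are nilpotent, so the cited statements apply to $S$. Second, every maximal subgroup of $S$ is trivial, hence inside each regular $\mathcal{J}$-class of $S$ every $\mathcal{H}$-class is a singleton; consequently, for a regular $\mathcal{R}$-class $X$ and vertices $\alpha,\alpha'\in V(Sch_A(X))$ the condition $\alpha\neq\alpha'$ is equivalent to ``$\alpha,\alpha'$ lie in distinct $\mathcal{H}$-classes'', so for such $X$ the property ``$X$ is nilpotent in $S$'' coincides with ``$X$ is $\mathcal{H}$-nilpotent in $S$'' and ``$X$ is strongly nilpotent in $S$'' coincides with ``$X$ is $\mathcal{H}$-strongly nilpotent in $S$''. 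Granting this, the forward implications are immediate: if $S$ is Mal'cev nilpotent then, by the contrapositive of Proposition~\ref{Alg3}(1), every regular $\mathcal{R}$-class of $S$ is $\mathcal{H}$-nilpotent, hence nilpotent, and if $S$ is strongly Mal'cev nilpotent then, by the contrapositive of Proposition~\ref{Alg3}(2), every regular $\mathcal{R}$-class of $S$ is $\mathcal{H}$-strongly nilpotent, hence strongly nilpotent.

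For the converse in part~(1) I would argue by contraposition. Assume $S$ is not Mal'cev nilpotent. By Lemma~\ref{[,]'} there are ideals $B_0\subsetneqq A_0$ of $S$, an inverse Rees matrix semigroup $M=\mathcal{M}^{0}(G,q,q;I_q)$ with $A_0/B_0\cong M$, elements $x=(g;\alpha,\beta)$ and $y=(g';\alpha',\beta')$ of $M$ with $\alpha\neq\alpha'$, and $w,v\in S\setminus B_0$ with $[\beta,\alpha';\beta',\alpha]\sqsubseteq\Gamma(w)$ and $[\beta',\alpha';\beta,\alpha]\sqsubseteq\Gamma(v)$, where $\Gamma$ is an $\mathcal{L}_M$-representation of $S/B_0$. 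Since $S\in\mathsf{BI}$ the group $G$ is trivial, so I may identify the regular $\mathcal{J}$-class $A_0\setminus B_0$ of $S$ with $M\setminus\{\theta\}$; fix $a\in\{1,\dots,q\}$ and let $X$ be the regular $\mathcal{R}$-class of $S$ with vertex set $\{(1;a,j)\mid 1\leq j\leq q\}$. Put $p_1=(1;a,\alpha)$, $p_2=(1;a,\alpha')$, $p_3=(1;a,\beta)$, $p_4=(1;a,\beta')$, so that $p_1\neq p_2$. Writing $v$ and $w$ as words $\widetilde v,\widetilde w\in A^{+}$ (possible because $S$ is $A$-generated), the displayed relations translate, using the triviality of $G$, into $p_3\widetilde v=p_1$, $p_4\widetilde v=p_2$, $p_4\widetilde w=p_1$ and $p_3\widetilde w=p_2$ in $S$; and since a product that is $\mathcal{R}$-equivalent to its first factor keeps every prefix in the same $\mathcal{R}$-class, $\widetilde v$ and $\widetilde w$ label runs in $Sch_A(X)$. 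Hence $\widetilde v\in L_{p_3,p_1,X}\cap L_{p_4,p_2,X}$ and $\widetilde w\in L_{p_4,p_1,X}\cap L_{p_3,p_2,X}$, so both of these intersections are nonempty and $X$ is not nilpotent in $S$, which is the required negation.

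The converse in part~(2) follows the identical scheme with Lemma~\ref{[,]} in place of Lemma~\ref{[,]'}. From $S\notin\mathsf{SMN}$ one obtains ideals $B_0\subsetneqq A_0$, an inverse Rees matrix semigroup $M=\mathcal{M}^{0}(G,q,q;I_q)$ with $A_0/B_0\cong M$ and $G$ trivial, an integer $t>1$, elements $y_i=(g_i;\alpha_i,\beta_i)\in M$ with $\abs{\{\alpha_1,\dots,\alpha_t\}}=t$, and $v_1,\dots,v_t\in S\setminus B_0$ with $[\beta_1,\alpha_{1+i\pmod t};\dots;\beta_t,\alpha_{t+i\pmod t}]\sqsubseteq\Gamma(v_i)$ for $1\leq i\leq t$. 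Fixing $a\in\{1,\dots,q\}$, letting $X$ be the regular $\mathcal{R}$-class $\{(1;a,j)\mid 1\le j\le q\}$, and putting $q_i=(1;a,\alpha_i)$ and $r_i=(1;a,\beta_i)$, the elements $q_1,\dots,q_t$ are pairwise distinct, and writing each $v_k$ ($1\le k\le t$) as a word over $A$ yields, for every integer $k$, a word witnessing $\bigcap_{i=1}^{t}L_{r_i,q_{i+k\pmod t},X}\neq\emptyset$ (use $v_{k'}$ with $1\le k'\le t$ and $k'\equiv k\pmod t$). Since this holds for all $k$, in particular for $k=0$, the class $X$ is not strongly nilpotent in $S$, and we are done. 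The only point requiring care throughout is the translation between the $\mathcal{L}_M$-representation data supplied by Lemmas~\ref{[,]'} and~\ref{[,]} and the path language $L_{\beta,\alpha,X}$ of the Sch\"utzenberger graphs; I expect the cyclic-shift bookkeeping in this last step to be the most delicate point, though it is entirely routine once the definitions are unwound.
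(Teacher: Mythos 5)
Your proof is correct and follows exactly the route the paper intends (the paper leaves Proposition~\ref{Alg4} without an explicit proof, indicating it is obtained from Lemmas~\ref{[,]'} and~\ref{[,]} together with the observation that in $\mathsf{BI}$ all subgroups are trivial). Your two key points — that trivial subgroups make $\mathcal{H}$-classes singletons so that nilpotent coincides with $\mathcal{H}$-nilpotent, and that the $\mathcal{L}_M$-representation data of the two lemmas translate directly into nonempty intersections of the languages $L_{\beta,\alpha,X}$ in the Schützenberger graph — are precisely the content of the implicit argument, and the cyclic-shift bookkeeping in part~(2) is handled correctly.
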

\section{An iterative description of $\mathsf{SMN}$}\label{iterSMN}

Let $$\mathsf{SMN^{\circ}_t}=\llbracket
\phi_t^{\omega}(y_1)=\cdots =\phi_t^{\omega}(y_t)
\rrbracket$$
where $\phi_t$ is the continuous endomorphism of the free profinite
semigroup on $\{y_1,\ldots,y_t,z_1,\ldots,z_t\}$ such that $\phi_t(y_i)=\lambda_{t,i}(y_1,\ldots,y_t;z_1,\ldots,z_t)$ and $\phi_t(z_i)=z_i$, for all $1\leq i\leq t.$

\begin{thm}\label{MN-omega5}
We have $\mathsf{SMN}=(\bigcap_{2\leq t}\mathsf{SMN^{\circ}_t})$.
\end{thm}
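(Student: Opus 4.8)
The plan is to prove the equality $\mathsf{SMN}=\bigcap_{2\leq t}\mathsf{SMN^{\circ}_t}$ by a double inclusion, exploiting the finitary criteria for strong Mal'cev nilpotency established in the previous section (Lemma~\ref{finite-nilpotent'}) together with the general stabilization principle for the iterated operators $\circ_i^{\omega}$ recalled in the preliminaries.

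\medskip

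First I would prove the inclusion $\mathsf{SMN}\subseteq\bigcap_{2\leq t}\mathsf{SMN^{\circ}_t}$. Fix $t\geq 2$ and a finite semigroup $S\in\mathsf{SMN}$; say $S$ has strong Mal'cev nilpotency class $n$. I must show $S$ satisfies the pseudoidentity $\phi_t^{\omega}(y_1)=\cdots=\phi_t^{\omega}(y_t)$, i.e.\ that for any interpretation of the generators in $S$, the corresponding components $\circ_i^{\omega}(\phi_t)(y_1),\ldots$ agree. Unwinding the definition of $\phi_t^{\omega}(y_i)=\circ_i^{\omega}(\phi_t(y_1),\ldots,\phi_t(y_t),z_1,\ldots,z_t)$: starting from $(u_{1,0},\ldots,u_{t,0})=(s_1,\ldots,s_t)\in S^t$ and iterating $u_{i,k+1}=\phi_t(y_i)$ evaluated at $(u_{1,k},\ldots,u_{t,k};z_1,\ldots,z_t)$, one sees by induction on $k$ that $u_{i,k}=\lambda_{k,i}(s_1,\ldots,s_t;z_1,\ldots,z_t)$ where here the $z_j$ appearing as separators are actually all equal to the fixed generator values $z_1,\ldots,z_t$ in a periodic pattern; more precisely the word computed at step $k$ uses the separators $z_1,\ldots,z_t,z_1,\ldots,z_t,\ldots$ cyclically, so $u_{i,k}$ is a $\lambda_{k,i}$-type term in $S$. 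Since $S$ has strong nilpotency class $n$, all the $\lambda_{n,i}$ coincide, and then $u_{1,k}=\cdots=u_{t,k}$ for all $k\geq n$; in particular this holds along the subsequence $k=r!$, so passing to the limit gives $\phi_t^{\omega}(y_1)=\cdots=\phi_t^{\omega}(y_t)$ in $S$. As $s_i,z_j$ were arbitrary, $S\in\mathsf{SMN^{\circ}_t}$, and as $t$ was arbitrary, $S\in\bigcap_{2\leq t}\mathsf{SMN^{\circ}_t}$.

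\medskip

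For the reverse inclusion, suppose $S\in\bigcap_{2\leq t}\mathsf{SMN^{\circ}_t}$ but $S\notin\mathsf{SMN}$. By Lemma~\ref{finite-nilpotent'} there are integers $t>1$ and $m$, pairwise distinct elements $a_1,\ldots,a_t\in S$ and elements $w_1,\ldots,w_m\in S$ with $a_i=\lambda_{m,i}(a_1,\ldots,a_t;w_1,\ldots,w_m)$ for all $1\leq i\leq t$. The idea is to feed a suitable periodic extension of the $w_j$ into $\phi_t$ so that the fixed point produced by $\phi_t^{\omega}$ is exactly $(a_1,\ldots,a_t)$, which is not constant since the $a_i$ are pairwise distinct — contradicting $S\in\mathsf{SMN^{\circ}_t}$. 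Concretely, interpret $y_i\mapsto a_i$ and choose the generators $z_1,\ldots,z_t$ so that the cyclic separator pattern $z_1,\ldots,z_t,z_1,\ldots$ reproduces $w_1,w_2,\ldots,w_m$ when read over $m$ consecutive positions; to make this literally work one first replaces $m$ by a multiple of $t$ (using that the recursion from a fixed tuple eventually enters a cycle, exactly as in the proof of Lemma~\ref{[,]}, so one may assume $t\mid m$) and sets $z_j = w_j$ for $1\le j\le t$ — then the defining relations force $(a_1,\ldots,a_t)$ to be a periodic point of period (dividing) $m/t$ for the map $(u_1,\ldots,u_t)\mapsto(\phi_t(y_1),\ldots,\phi_t(y_t))$ evaluated at the $z$'s. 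Since $\circ_i^{\omega}$ is computed along the subsequence $k=r!$ and the orbit of $(a_1,\ldots,a_t)$ is a cycle containing $(a_1,\ldots,a_t)$, for $r$ large and divisible by the period the value $u_{i,r!}$ equals $a_i$; hence $\phi_t^{\omega}(y_i)$ evaluates to $a_i$ at this interpretation, and $\phi_t^{\omega}(y_1)=\cdots=\phi_t^{\omega}(y_t)$ would give $a_1=\cdots=a_t$, a contradiction. Therefore $S\in\mathsf{SMN}$.

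\medskip

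The main obstacle I anticipate is bookkeeping in the reverse inclusion: matching the finite word sequence $w_1,\ldots,w_m$ from Lemma~\ref{finite-nilpotent'} to the rigidly periodic separator pattern $z_1,z_2,\ldots,z_t,z_1,\ldots$ that is built into the single endomorphism $\phi_t$, and ensuring that the limit along $k=r!$ genuinely lands on the non-constant tuple rather than on some other point of the eventual cycle. This is handled by first normalizing $m$ to be a multiple of $t$ — passing, as in the proof of Lemma~\ref{[,]}, from arbitrary $w_j$ to the tuple $(y_1,\ldots,y_t)=(\lambda_{r_1,1},\ldots)$ once the iteration has stabilized into its cycle — and then observing that $(a_1,\ldots,a_t)$ lies on that cycle, so that it recurs at all sufficiently large multiples of the cycle length, in particular at infinitely many factorials. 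The forward inclusion is comparatively routine once the identification $u_{i,k}=\lambda_{k,i}$-term is made precise, and it only uses that $\lambda_{n,i}$ all coincide to conclude stabilization of the whole sequence from step $n$ on.
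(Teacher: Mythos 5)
Your forward inclusion is essentially the paper's argument (identify $\phi_t^{\,k}(y_i)$ with $\lambda_{kt,i}(y_1,\ldots,y_t;z_1,\ldots,z_t,z_1,\ldots)$ and use stabilization), and that half is fine. The reverse inclusion, however, has a genuine gap, precisely at the point you flag as the "main obstacle". Lemma~\ref{finite-nilpotent'} hands you a witness $a_i=\lambda_{m,i}(a_1,\ldots,a_t;w_1,\ldots,w_m)$ in which $w_1,\ldots,w_m$ is an \emph{arbitrary} sequence, whereas iterating $\phi_t$ can only realize separator sequences that are periodic with period $t$, namely $z_1,\ldots,z_t,z_1,\ldots,z_t,\ldots$. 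Arranging $t\mid m$ and setting $z_j=w_j$ for $1\leq j\leq t$ reproduces only the first $t$ separators; for the fixed-point relation to survive the substitution you would need $w_{j+t}=w_j$ for all $j$, and nothing in Lemma~\ref{finite-nilpotent'} or in the "enter a cycle" normalization gives you that: periodicity of the tuple sequence $(u_{1,k},\ldots,u_{t,k})$ is not periodicity of the separator sequence. So $(a_1,\ldots,a_t)$ need not be a periodic point of the map induced by $\phi_t$ under your interpretation, and the contradiction does not follow.

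The paper closes exactly this gap with structure theory rather than with bookkeeping on the witness. It first disposes of the case $S\notin\mathsf{MN}$ using the known pseudoidentity description of $\mathsf{MN}$ (which is $\mathsf{SMN}^{\circ}_2$), and then, for $S\in\mathsf{BG}_{nil}\setminus\mathsf{SMN}$, invokes Lemma~\ref{[,]} (built on Lemma~\ref{[,]1}). That lemma produces elements $y_i=(g_i;\alpha_i,\beta_i)$ in a $0$-minimal Rees factor with the $\alpha_i$ pairwise distinct and only $t$ separators $v_1,\ldots,v_t$ satisfying $[\beta_1,\alpha_{1+i\pmod t};\ldots;\beta_t,\alpha_{t+i\pmod t}]\sqsubseteq\Gamma(v_i)$; these conditions are by construction compatible with the period-$t$ pattern, since with $z_i:=v_i$ one checks $\phi_t^{\,k}(y_i)=(\ast;\alpha_i,\beta_{i-kt})$ for all $k$, so all iterates stay in the same $\mathcal{J}$-class with fixed, pairwise distinct first coordinates, whence the limits $\phi_t^{\omega}(y_i)$ are pairwise distinct and $S\notin\mathsf{SMN}^{\circ}_t$. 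Some reduction of this kind to a period-compatible witness is indispensable; your proof needs either to cite Lemma~\ref{[,]} at this step or to reprove its content.
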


\begin{proof}
First, we prove that $\mathsf{SMN}\subseteq\mathsf{SMN^{\circ}_t}$, for every $t\geq 2$. Suppose the contrary. Hence, there exists $S\in \mathsf{SMN}$, elements $y_1,\ldots,y_t,z_1,\ldots,z_t\in S$ and distinct integers $i$ and $j$ such that $\phi_t^{\omega}(y_i)\neq\phi_t^{\omega}(y_j)$. Therefore, we have
$$2\leq\abs{\{\lambda_{n,i}(y_1,\ldots,y_t;z_1,\ldots,z_t,z_1,\ldots)\mid 1\leq i\leq t\}},$$
for every positive integer $n$ which is a contradiction with $S\in \mathsf{SMN}$.

Now, suppose that there exists a finite semigroup $S$ which $S\not\in \mathsf{SMN}$ and $S\in(\bigcap_{2\leq t}\mathsf{SMN^{\circ}_t})$. If $S\not\in\mathsf{MN}$ then, by \cite[Theorem 3.1]{Al-Sha}, we have $S\not\in \mathsf{SMN}^{\circ}_2$, a contradiction. Hence, $S\in\mathsf{BG}_{nil}$ and $S\not\in \mathsf{SMN}$. By Lemma~\ref{[,]}, there exists an integer $t$ such that $S\not\in \mathsf{SMN}^{\circ}_t$, a contradiction.

The result follows.
\end{proof}

The following theorem shows that the pseudovariety $\mathsf{SMN}$ has infinite rank and, therefore, it is non-finitely based. 

\begin{thm} \label{MN' Rank}
The pseudovariety $\mathsf{SMN}$ has infinite rank.
\end{thm}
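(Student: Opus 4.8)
The plan is to exhibit, for every sufficiently large $n$, a finite semigroup $S_n \in \mathsf{SMN}$ which cannot be defined inside $\mathsf{SMN}$ using fewer than $n$ variables; equivalently, $S_n$ satisfies all pseudoidentities in fewer than $n$ variables that hold in $\mathsf{SMN}$, yet $S_n \notin \mathsf{SMN}$ forces any generating set of pseudoidentities for $\mathsf{SMN}$ to contain one with at least $n$ variables. The natural candidates come from the obstruction in Lemma~\ref{[,]} and Lemma~\ref{finite-nilpotent'}: non-strong-nilpotency is witnessed by a cyclic configuration of $t$ pairwise distinct elements $a_1,\ldots,a_t$ with $a_i = \lambda_{m,i}(a_1,\ldots,a_t;w_1,\ldots,w_m)$, and the parameter $t$ can be forced to be large. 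So I would take $S_n$ to be built from an inverse Rees matrix semigroup $M = \mathcal{M}^0(\{1\},n,n;I_n)$ together with a (block group, nilpotent) $\theta$-disjoint union $M \cup^{\Delta} T$ of the type described in Section~\ref{sec:prelims}, where $T$ carries a single generator acting as an $n$-cycle on $\{1,\ldots,n\}$. This $S_n$ is not strongly Mal'cev nilpotent because the $n$-cycle produces a genuine cyclic configuration with $t = n$ distinct idempotents; on the other hand it lies in $\mathsf{BG}_{nil}$, and any cyclic obstruction inside it necessarily involves a divisor of $n$ that is itself $\neq 1$, so "small" configurations are ruled out.

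The key steps, in order, are: (1) define the family $S_n$ precisely as a $\theta$-disjoint union $M_n \cup^{\Delta_n} T_n$ and verify $S_n \in \mathsf{BG}_{nil}$ (the only maximal subgroups are trivial, the ${\mathcal L}_{M_n}$-representation is by partial injections, and the block-group condition is immediate); (2) using Lemma~\ref{[,]} or Lemma~\ref{finite-nilpotent'}, show $S_n \notin \mathsf{SMN}$ by exhibiting the $n$ distinct idempotents $(1;\alpha_i,\alpha_i)$ of $M_n$ together with the generator of $T_n$ whose $\Gamma$-image is the full $n$-cycle $(\alpha_1,\ldots,\alpha_n)$; (3) prove the converse direction of the key lemma forces the parameter: any witnessing configuration for non-strong-nilpotency in $S_n$ has its cycle length $t$ equal to (a multiple, hence by Lemma~\ref{[,]1} a divisor $\neq 1$ of) $n$, because all nontrivial cyclic orbits in the ${\mathcal L}_{M_n}$-representation of $S_n$ come from powers of the single $n$-cycle; (4) conclude that if $n$ is prime, any pseudoidentity witnessing $S_n \notin \mathsf{SMN}$ — i.e., any pseudoidentity in the defining set $\Sigma$ for $\mathsf{SMN}$ that $S_n$ fails — must "see" $n$ distinct elements, hence must use at least $n$ distinct variables. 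Running this over the infinitely many primes $n$ shows no finite bound on the number of variables suffices, i.e. $\mathsf{SMN}$ has infinite rank; and infinite rank immediately gives non-finite basability.

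For step (4) the cleanest formulation is via the ultimately-equational description of Section~\ref{iterSMN}: by Theorem~\ref{MN-omega5}, $\mathsf{SMN} = \bigcap_{t\ge 2}\mathsf{SMN}^{\circ}_t$, and $\mathsf{SMN}^{\circ}_t$ is defined by a pseudoidentity in $2t$ variables. I would show $S_n \in \mathsf{SMN}^{\circ}_t$ for every $t < n$ (and in fact for every $t$ that is not a multiple of $n$), using step (3): the only way $\phi_t^{\omega}(y_1),\ldots,\phi_t^{\omega}(y_t)$ can fail to coincide in $S_n$ is through a genuine $t$-element cyclic obstruction, which $S_n$ does not possess unless $n \mid t$. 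Since each $\mathsf{SMN}^{\circ}_t$ has rank $\le 2t$, this shows that the rank-$r$ "approximation" $\bigcap_{t \le r}\mathsf{SMN}^{\circ}_t$ contains $S_n$ for every prime $n > r$, hence strictly contains $\mathsf{SMN}$; as $r$ is arbitrary, $\mathsf{SMN}$ is not defined by any set of bounded-variable pseudoidentities.

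The main obstacle I anticipate is step (3): one must rule out \emph{all} alternative witnessing configurations in $S_n$, not merely the obvious one — in particular configurations whose elements come partly from $M_n$ and partly from $T_n$, and configurations where the $v_i$ involve products that mix the two summands. The $\theta$-disjoint-union structure is designed to control this: an element $\lambda_{m,i}$ landing inside $M_n$ forces all the $y_i$ into $M_n$ and the $v_j$ into $S_n \setminus B$, and then the ${\mathcal L}_{M_n}$-representation of $S_n/B$ collapses the analysis to the action of the single generator of $T_n$, whose only nontrivial cyclic orbits are the $n$-cycle and, for non-coprime powers, shorter cycles whose lengths still divide $n$. Making the bookkeeping of "which divisor of $n$ appears" completely rigorous — and in particular invoking Lemma~\ref{[,]1} to pass from "a $t$-cycle with repeated $\alpha_i$'s" down to a $t'$-cycle with $t' \mid t$, $t' \ne 1$, and then observing $t' \mid n$ — is the technical heart of the argument, but it is a finite and explicit combinatorial check rather than anything genuinely deep.
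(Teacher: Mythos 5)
There is a genuine gap, and it occurs at the point your proposal itself flags as the crux. The rank of $\mathsf{SMN}$ is the least $n$ such that \emph{some} basis of pseudoidentities in $n$ variables defines it. Showing that the particular truncations $\bigcap_{t\le r}\mathsf{SMN}^{\circ}_t$ strictly contain $\mathsf{SMN}$ only rules out one candidate basis; it says nothing about other bases in $\le r$ variables, so the final inference of your step (4) ("as $r$ is arbitrary, $\mathsf{SMN}$ is not defined by any set of bounded-variable pseudoidentities") is a non sequitur. The workable criterion is the subsemigroup one: $\mathsf{SMN}$ has rank $>r$ provided there is a finite $S\notin\mathsf{SMN}$ all of whose $r$-generated subsemigroups lie in $\mathsf{SMN}$ (a failing evaluation of an $r$-variable pseudoidentity valid in $\mathsf{SMN}$ would have to land inside an $r$-generated subsemigroup). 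Your proposal never verifies this condition, and this is exactly what the paper's proof consists of.

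Worse, your candidate cannot satisfy that condition. If $S_n=M_n\cup^{\Delta}T_n$ with $T_n$ generated by a single element $c$ acting as an $n$-cycle, then $S_n$ is (essentially) generated by the two elements $c$ and $(1;1,1)$: left and right multiplication by powers of $c$ together with products inside $\mathcal{M}^0(\{1\},n,n;I_n)$ recover every $(1;i,j)$, so the full cyclic obstruction of Lemma~\ref{[,]} --- the $n$ idempotents $(1;i,i)$ together with the $v_i=c^i$ --- already lives in a $2$-generated subsemigroup. The number of \emph{variables} of a pseudoidentity is not the length $t$ of the obstruction: the elements $y_1,\ldots,y_t,v_1,\ldots,v_t$ need only lie in the subsemigroup generated by the variable values. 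This is precisely why the paper's construction uses $2p$ generators, namely the $p$ partial bijections $X_{p,i}=(\alpha_i,\beta_i,\theta)$ and the $p$ separate shift maps $W_{p,1},\ldots,W_{p,p}$; one then shows, via Lemma~\ref{[,]} and a divisibility argument on cycle lengths using the primality of $p$ (this part is indeed in the same spirit as your step (3)), that any witnessing configuration in a subsemigroup $T$ forces $t'=p$ and hence forces all $2p$ generators into $T$. Thus every $(2p-1)$-generated subsemigroup is in $\mathsf{SMN}$ while $S_p$ is not, which is what actually bounds the rank from below.
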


\begin{proof}
We prove that for every prime number $p$, there exists a finite semigroup $S$ such that $S$ is generated by $2p$ elements, $S\not\in\mathsf{SMN}$ and $$\langle x_1,\ldots, x_{2p-1}\rangle \in\mathsf{SMN},$$ for all $x_1,\ldots, x_{2t-1}\in S$.

Let the sets $A_p=\{\alpha_1,\ldots,\alpha_p\}$ and $B_p=\{\beta_1,\ldots,\beta_p\}$ with $A_p\cap B_p=\emptyset$ and the partial bijections
$X_{p,i}=(\alpha_i,\beta_i,\theta)$ and 
$$W_{p,i}=(\beta_1,\alpha_{1+i\pmod p},\theta)\,\cdots\,(\beta_p,\alpha_{p+i\pmod p},\theta)\quad (1\leq i\leq p)$$ 
on the set $A_p\cup B_p\cup\{\theta\}$.
Let $S_{p}$ be a subsemigroup of the full transformation semigroup on the set $A_p\cup B_p\cup\{\theta\}$ given by
$$
S_p=\langle X_{p,1},\ldots,X_{p,p},W_{p,1},\ldots,W_{p,p}\rangle.
$$
By Lemma~\ref{[,]}, the semigroup $S_{p}$ is not in $\mathsf{SMN}$. 

Suppose that a subsemigroup $T=\langle y_1,\ldots, y_{2p-1}\rangle$ of $S_p$ is not in $\mathsf{SMN}$. Since $S_{p}=\mathcal{M}^{0}(\{1\},2p,2p;I_{2p})\cup \{W_{p,1},\ldots,W_{p,p}\}$ and $T\not\in\mathsf{SMN}$, by Lemma~\ref{[,]}, there exist an integer $p'\leq p$ and elements 
$$a_i=(1;\alpha_{j_i},\beta_{j_i})\in\mathcal{M}^{0}(\{1\},2p,2p;I_{2p}),b_i\in\{W_{p,1},\ldots,W_{p,p}\}$$ such that $\abs{\{\alpha_{j_1},\ldots,\alpha_{j_{p'}}\}}=\abs{\{\beta_{j_1},\ldots,\beta_{j_{p'}}\}}=p'$ and
$$a_i=\lambda_{p',i}(a_1,\ldots,a_{p'};b_{1},\ldots, b_{p'}),$$ for every $1\leq i\leq p'$.
There exists an integer $1\leq k\leq p$ such that $b_1=W_{p,k}$ and, thus, we have $j_i+k=j_{i+1}\pmod p$, for every $1\leq i<p'$ and $j_{p'}+k=j_{1}\pmod p$. First, suppose that $p'$ is even. Then, we have $j_1+(p'/2)k=j_{1+p'/2}, j_1-(p'/2)k=j_{1+p'/2}\pmod p$ and, thus
$2j_1=2j_{1+p'/2}\pmod p$. Since the integers $j_1$ and $j_{1+p'/2}$ are distinct, we have $2\mid p$. As $p$ is prime, it follows that $p=p'=2$. Now, suppose that $p'$ is odd. Hence, we have $j_i+(l)k=j_{i+l\mod p'}, j_i-(l)k=j_{i-l\mod p'}\pmod p$, for every $1\leq i\leq p'$ and $1\leq l\leq (p'-1)/2$.
It follows that $j_1+j_2+\cdots+j_{p'}=p'j_1=\cdots=p'j_{p'}$. Hence, it follows that $p'\mid p$ and, thus, $p=p'$.
Therefore, we have $\{W_{p,1},\ldots,W_{p,p}\}\subsetneqq \{y_1,\ldots, y_{2p-1}\}$. Hence, there exists an integer 
$1\leq i\leq p$ such that $(\alpha_i,z,\theta)\not\in \{y_1,\ldots, y_{2p-1}\}$, for every $z\in A_p\cup B_p$ and, thus, $(\alpha_i,\beta_i,\theta)\not\in M$. Since $\{b_{1},\ldots, b_{p}\}=\{W_{p,1},\ldots,W_{p,p}\}$, we have $\{a_1,\ldots,a_{p}\}=\{(1;\alpha_i,\beta_i),\ldots,(1;\alpha_p,\beta_p)\}$, a contradiction.

The result follows.
\end{proof}

The following proposition can be seen as criteria to determine when a semigroup $S\in\mathsf{BI}$ is not Mal'cev nilpotent or is not strongly Mal'cev nilpotent. 

\begin{prop}\label{v-1v-2}
Let $S$ be a semigroup in the pseudovariety $\mathsf{BI}$. Suppose that there exist ideals $A,B$ of $S$, an inverse Rees matrix semigroup $M=\mathcal{M}^{0}(\{1\},q,q;I_q)$, an integer $t$ and elements $y_1,\ldots,y_t,v_1,v_2$ such that the following conditions are satisfied:
\begin{enumerate}
\item $B\subsetneqq A$ and $A/B\cong M$;
\item $1<t$;
\item $y_i=(1;\alpha_i,\beta_i)\in M$, for every $1\leq i\leq t$ and $\abs{\{\alpha_1,\ldots,\alpha_t\}}=t$;
\item $[\beta_1,\alpha_{1};\ldots;\beta_t,\alpha_{t}]\sqsubseteq \Gamma(v_1)$ and $[\beta_1,\alpha_{1+i\pmod t};\ldots;\beta_t,\alpha_{t+i\pmod t}]\sqsubseteq \Gamma(v_2),$ for some integer $1\leq i< t$, where $\Gamma$ is an ${\mathcal L}_M$-representation of $S/B$.
\end{enumerate}
Then, the elements $v_1$ and $v_2$ are not regular.
\end{prop}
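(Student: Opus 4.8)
The plan is to argue by contradiction: assuming that one of $v_1,v_2$ is regular, I would exhibit an element $w\in S$ such that $\Gamma(w)$, restricted to a suitable $t$-element subset of $\{1,\ldots,q\}$, is a non-trivial permutation, which is impossible because $S$ is aperiodic.

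Two facts underlie the argument. Since $M=\mathcal{M}^{0}(\{1\},q,q;I_q)$ is inverse and is the ideal $A/B$ of $S/B$, each $\Gamma(s)$ is a partial injection of $\{1,\ldots,q\}$ (its orbit decomposition consists only of cycles and $\theta$-tails, as recalled in Section~\ref{sec:prelims}); composing $\Gamma$ with the quotient $S\to S/B$, I view $\Gamma$ as a homomorphism from $S$ to the full transformation semigroup, still valued in partial injections. Also, $S\in\mathsf{BI}$ means every subgroup of $S$ is trivial, so $S$ is aperiodic; hence $w^{\omega}=w^{\omega+1}$ for all $w\in S$, so $\Gamma(w)^{\omega}=\Gamma(w)^{\omega+1}$, and therefore the orbit decomposition of $\Gamma(w)$ contains no cycle of length greater than $1$.

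I would treat $v_1$ first. Since $\Gamma(v_1)$ is a function with $\Gamma(v_1)(\beta_j)=\alpha_j$, condition~(3) forces $\beta_1,\ldots,\beta_t$ to be pairwise distinct. Suppose $v_1$ is regular, and choose $u\in S$ with $v_1uv_1=v_1$. Evaluating the identity $\Gamma(v_1)\Gamma(u)\Gamma(v_1)=\Gamma(v_1)$ at $\beta_j$ and using $\Gamma(v_1)(\beta_j)=\alpha_j$ gives $\Gamma(v_1)(\Gamma(u)(\alpha_j))=\alpha_j$; as $\Gamma(v_1)$ is injective where it is defined, this yields $\Gamma(u)(\alpha_j)=\beta_j$ for all $1\leq j\leq t$. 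Putting $w=v_2u\in S$, for every $j$ we obtain
\[
\Gamma(w)(\beta_j)=\Gamma(u)(\Gamma(v_2)(\beta_j))=\Gamma(u)(\alpha_{j+i\pmod t})=\beta_{j+i\pmod t}.
\]
Thus $\Gamma(w)$ maps $\{\beta_1,\ldots,\beta_t\}$ bijectively onto itself by the permutation $\beta_j\mapsto\beta_{j+i\pmod t}$, which is non-trivial since $1\leq i<t$; hence $\Gamma(w)$ has a cycle of length greater than $1$, contradicting the second fact above, so $v_1$ is not regular. The case of $v_2$ is symmetric: a hypothetical $u\in S$ with $v_2uv_2=v_2$ satisfies $\Gamma(u)(\alpha_{j+i\pmod t})=\beta_j$, that is, $\Gamma(u)(\alpha_k)=\beta_{k-i\pmod t}$ for all $k$, and then $w=v_1u$ gives $\Gamma(w)(\beta_j)=\beta_{j-i\pmod t}$, again a non-trivial permutation of $\{\beta_1,\ldots,\beta_t\}$, contradicting aperiodicity.

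Each individual step is routine; the one place where an idea is needed is the choice of $w$. Rather than bookkeeping with the Rees coordinates of $v_1$ inside its $\mathcal{J}$-class (which does not seem to lead anywhere), one multiplies a putative semigroup inverse $u$ of $v_1$ by the ``$i$-shifted'' element $v_2$, so that the composite action $\Gamma(v_2u)$ closes up into a non-trivial cyclic permutation on $\{\beta_1,\ldots,\beta_t\}$; aperiodicity of $S$ then delivers the contradiction.
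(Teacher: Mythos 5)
Your proof is correct and follows essentially the same route as the paper: from a semigroup inverse of one of $v_1,v_2$ combined with the other element one manufactures an element of $S$ whose image under $\Gamma$ contains a nontrivial cycle, contradicting the aperiodicity forced by $S\in\mathsf{BI}$. The only cosmetic difference is that the paper takes the (unique) inverse $v_2^{-1}$ and considers powers of $v_2^{-1}v_1$ acting on the $\alpha$'s, whereas you use a weak inverse $u$ and let $v_2u$ (resp.\ $v_1u$) act on the $\beta$'s.
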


\begin{proof}
We prove this by contradiction. Suppose that $v_2$ is regular.
Let $k=t/\mbox{gcd}(t,i)$.
Since $v_2$ is regular and $S\in \mathsf{BI}$, $v_2$ has an inverse element. Hence, we have
\begin{align*}
(\alpha_1,\alpha_{1+i\pmod t},\alpha_{1+2i\pmod t},\ldots,\alpha_{1-i\pmod t}&)\subseteq\Gamma((v_2^{-1}v_1)^{k-1}).
\end{align*}
Since $i<t$, we have $1<k$ and, thus, $S$ is not aperiodic.
This contradicts the assumption that $S\in \mathsf{BI}$.  

Similarly, we have a contradiction when $v_1$ is regular.
\end{proof}

The following example is presented to illustrate the determination of some strongly Mal'cev nilpotent semigroups by Proposition~\ref{v-1v-2}.

\begin{example}
Let $S$ be the subsemigroup of the full transformation semigroup on the set $\{1,\ldots,18\} \cup \{\theta\}$ such that 
$$S = \langle y_1,y_2,y_3,z_1,z_2,z_3\rangle,$$
where 
\begin{align*}
y_1=&(1,2,0)(13,14,0)(15,16,0),\\
y_2=&(5,6,0)(7,8,0)(17,18,0),\\
y_3=&(3,4,0)(9,10,0)(11,12,0),\\
z_1=&(2,7,0)(4,15,0)(6,11,0)(8,9,0)(10,1,0)(12,13,0)(14,5,0)(16,17,0)\\&(18,3,0),\\
z_2=&(2,3,0)(4,5,0)(6,1,0),\\
z_3=&(2,1,0)(4,3,0)(6,5,0)(8,7,0)(10,9,0)(12,11,0)(14,13,0)(16,15,0)\\&(18,17,0).
\end{align*}
The semigroup $S$ is strongly Mal'cev nilpotent.
\end{example}

\begin{proof}
The semigroup $S$ is aperiodic and $S$ has the principal series $$S= S_1 \supset S_2 \supset S_3 \supset S_{4} \supset S_{5}\supset S_{6} = \{\theta\}$$
where $S_5/S_6=(M_1=)\mathcal{M}^0(\{1\},18,18;I_{18})$,  $S_4/S_5=(M_2=)\mathcal{M}^0(\{1\},6,6;I_{6})$, $S_3\setminus S_4=\{z_1\}$, $S_2\setminus S_3=\{z_2\}$ and $S_1\setminus S_2=\{z_3\}$. Hence, only the elements $z_1,z_2$ and $z_3$ are non regular elements of $S$.
Let $\Gamma_i$ be an $M_i$-representation of $M_i$, for $1\leq i\leq 2$. We also have $\Gamma_2(z_2)=\theta$.

Since $S\in\mathsf{BI}$, if $S\not\in \mathsf{SMN}$, by Lemma~\ref{[,]} and Proposition \ref{v-1v-2}, one of the following conditions holds:
\begin{enumerate}
\item there exist distinct elements $a_1,a_2\in S$ and distinct elements 
$w_{1}, w_{2}\in\{z_1,z_2,z_3\}$ such that $a_i = \lambda_{2,i}(a_1,a_2;w_{1},w_{2})$, for all $1\leq i\leq 2$;
\item there exist pairwise distinct elements $a_1,a_2,a_3\in S$ and pairwise distinct elements 
$w_{1}, w_{2},w_3\in\{z_1,z_2,z_3\}$ such that $$a_i = \lambda_{3,i}(a_1,a_2,a_3;w_{1},w_{2},w_3),$$ for all $1\leq i\leq 3$.
\end{enumerate}
By using a Mathematica package developed by the first author, based on Proposition~\ref{Alg4}, one can check that $S\in\mathsf{MN}$, and it follows that the part (1) does not imply. Since $\abs{\{i\mid 1\leq i\leq 18\mbox{ and }\Gamma_1(z_2)(i)\neq\theta\}}=3$, $\Gamma_1(z_2)(2)\neq 0,(2,7,0)\subseteq\Gamma(z_1)$ and there does not exist any integer $i$ such that $\Gamma_1(z_2)(i)=7$, we have $a_1,a_2,a_3\not\in M_1\setminus\{\theta\}$ and, thus $a_1,a_2,a_3\in M_2\setminus\{\theta\}$. Now, as $\Gamma_2(z_2)=\theta$, the part (2) does not imply. A contradiction and, thus, $S\in \mathsf{SMN}$.
\end{proof}

%

We have $\langle\mathsf{A}\cap\mathsf{Inv}\rangle \subsetneqq\mathsf{A}\cap\mathsf{MN}$ (\cite[Theorem 8.1]{Al-Sha}). The following theorem presents the similar result for strong Mal'cev nilpotency. 

\begin{thm}\label{A MN'}
We have $\langle\mathsf{A}\cap\mathsf{Inv}\rangle \subsetneqq\mathsf{A}\cap\mathsf{SMN}$.
\end{thm}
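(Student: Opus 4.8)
The plan is to show both inclusion and strictness. The inclusion $\langle\mathsf{A}\cap\mathsf{Inv}\rangle \subseteq\mathsf{A}\cap\mathsf{SMN}$ should follow from showing that every aperiodic inverse semigroup is strongly Mal'cev nilpotent, since $\mathsf{A}\cap\mathsf{SMN}$ is a pseudovariety and hence closed under the operations generating $\langle\mathsf{A}\cap\mathsf{Inv}\rangle$. To prove that an aperiodic inverse semigroup $S$ lies in $\mathsf{SMN}$, I would argue by contradiction using the criterion of Lemma~\ref{finite-nilpotent'} (or Lemma~\ref{[,]}, which applies since $\mathsf{A}\cap\mathsf{Inv}\subseteq\mathsf{BG}_{nil}$): a failure of strong Mal'cev nilpotency produces an inverse Rees matrix coordinate block $M=\mathcal{M}^0(\{1\},q,q;I_q)$ inside a principal factor, an integer $t>1$, elements $y_i=(1;\alpha_i,\beta_i)$ with $|\{\alpha_1,\dots,\alpha_t\}|=t$, and elements $v_1,\dots,v_t\in S\setminus B$ with $[\beta_1,\alpha_{1+i\!\!\pmod t};\dots;\beta_t,\alpha_{t+i\!\!\pmod t}]\sqsubseteq\Gamma(v_i)$. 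Exactly as in the proof of Proposition~\ref{v-1v-2}, one can then form a suitable product of (inverses of) the $v_i$'s that acts as a nontrivial cyclic permutation of the set $\{\alpha_1,\dots,\alpha_t\}$ — using that in an inverse semigroup the $v_i$'s have inverses and $\Gamma$ is genuinely a partial bijection on coordinates — forcing $S$ to have a nontrivial cyclic subgroup, contradicting aperiodicity. Hence $S\in\mathsf{SMN}$, giving the inclusion.

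For strictness, the cleanest route is to reuse the known separating example. Since $\langle\mathsf{A}\cap\mathsf{Inv}\rangle\subseteq\mathsf{A}\cap\mathsf{SMN}\subseteq\mathsf{A}\cap\mathsf{MN}$ and it is already established in \cite[Theorem 8.1]{Al-Sha} that $\langle\mathsf{A}\cap\mathsf{Inv}\rangle\subsetneqq\mathsf{A}\cap\mathsf{MN}$, it would suffice to exhibit one aperiodic strongly Mal'cev nilpotent semigroup that is not in $\langle\mathsf{A}\cap\mathsf{Inv}\rangle$; but a priori the witness of \cite{Al-Sha} might lie in $\mathsf{MN}\setminus\mathsf{SMN}$. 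So the honest approach is to inspect the construction used in \cite[Theorem 8.1]{Al-Sha} — or the closely related construction of Higgins and Margolis \cite{Hig-Mar} separating $\langle\mathsf{A}\cap\mathsf{Inv}\rangle$ from $\mathsf{A}\cap\langle\mathsf{Inv}\rangle$ — and verify that the relevant example is in fact strongly Mal'cev nilpotent. Concretely, I would take the semigroup witnessing $\langle\mathsf{A}\cap\mathsf{Inv}\rangle\subsetneqq\mathsf{A}\cap\mathsf{MN}$, check that it belongs to $\mathsf{BI}$, and then apply Proposition~\ref{v-1v-2} together with Lemma~\ref{[,]} (as done in the preceding Example) to rule out every potential obstruction to strong Mal'cev nilpotency: any putative failure must live in a principal factor with inverse coordinate structure, and the non-regularity constraints from Proposition~\ref{v-1v-2} combined with the explicit action of the generators eliminate all cases.

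The main obstacle I anticipate is the second part: making sure the specific separating semigroup from \cite{Al-Sha} really is strongly Mal'cev nilpotent rather than merely Mal'cev nilpotent. If it happens not to be, one must modify the construction — presumably by thinning the set of "connecting" generators so that no nontrivial cyclic pattern of length $t>1$ can be realized, while preserving the property of lying outside $\langle\mathsf{A}\cap\mathsf{Inv}\rangle$ (which is typically witnessed by a single non-inverse idempotent-pattern that survives such thinning). Verifying non-membership in $\langle\mathsf{A}\cap\mathsf{Inv}\rangle$ for the (possibly modified) example is the delicate structural point; here I would lean on the same invariant used in \cite{Hig-Mar,Al-Sha} — typically an argument that the syntactic or division-theoretic obstruction of the example cannot be matched by any subsemigroup of a finite direct product of aperiodic inverse semigroups — rather than reproving it from scratch.
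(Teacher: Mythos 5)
Your proposal matches the paper's proof in both halves: the inclusion is obtained exactly as you describe, by combining Lemma~\ref{[,]} with Proposition~\ref{v-1v-2} (a failure of strong nilpotency forces the $v_i$ to be non-regular, which is impossible in an inverse aperiodic semigroup), and strictness is witnessed by the same example, namely the semigroup $N_4$ of \cite{Al-Sha}, which the paper checks via Lemma~\ref{[,]} to lie in $\mathsf{A}\cap\mathsf{SMN}\setminus\langle\mathsf{A}\cap\mathsf{Inv}\rangle$. The obstacle you anticipated does not materialize: the known separating example is already strongly Mal'cev nilpotent, so no modification is needed.
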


\begin{proof}

Suppose that $S\in(\mathsf{A}\cap\mathsf{Inv}) \setminus(\mathsf{A}\cap\mathsf{SMN})$. 
Since $S\in\mathsf{BI}$ and $S\not\in \mathsf{SMN}$, by Lemma~\ref{[,]} and Proposition~\ref{v-1v-2}, $S$ is not inverse, a contradiction.
Hence, $\mathsf{A}\cap\mathsf{Inv}$ is contained in $\mathsf{SMN}$ and, thus, $\langle\mathsf{A}\cap\mathsf{Inv}\rangle \subseteq\mathsf{A}\cap\mathsf{SMN}$. 

By Lemma~\ref{[,]}, the semigroup $N_4$ in \cite{Al-Sha} is in the subset $\mathsf{A}\cap\mathsf{SMN}\setminus \langle\mathsf{A}\cap\mathsf{Inv}\rangle$.
Therefore, $\langle\mathsf{A}\cap\mathsf{Inv}\rangle$ is strictly contained in $\mathsf{A}\cap\mathsf{SMN}$.
\end{proof}

Note that, we can improve the result of Theorem~\ref{A MN'} and claim that $$\langle\mathsf{A}\cap\mathsf{Inv}\rangle \subsetneqq\langle\mathsf{Inv}\rangle\cap\mathsf{A}\cap\mathsf{SMN}.$$ 
Before we present an example to show that $\langle\mathsf{A}\cap\mathsf{Inv}\rangle$ is strictly contained in $\langle\mathsf{Inv}\rangle\cap\mathsf{A}\cap\mathsf{SMN}$, we recall some definitions from \cite{Hig-Mar}.
Consider the sets $X_n=\{1,\ldots,n\}$, $X'_n=\{1',\ldots,n'\}$, $X_{2n} = \{1, \ldots, n, 1', \ldots, n'\}$ and the Rees matrix semigroup $M=\mathcal{M}^{0}(\{1\},2n,2n;I_{2n})$. Take the action $\Gamma$ of the symmetric inverse semigroup $\mathcal{I}_{2n}$ on the ${\mathcal L}$-classes of $M$.
Let $b_1,b_2,\ldots ,b_k$ be bijections on the set $X_n$, and let $U$ be the semigroup generated by the bijections $b_i$, ($1\leq i \leq k$). Higgins and Margolis introduced
the subsemigroup $S(U)$ of $\mathcal{I}_{2n}$ as follows. For each $1 \leq i \leq k$, let $b'_i$ be the map with $\mbox{dom~} b'_i =\mbox{dom~} b_i$, and $\mbox{ran~} b'_i\subseteq (\mbox{ran~} b_i)'$ which acts as follows:
$\Gamma (b'_i)(\alpha) = (\Gamma (b_i)(\alpha))'$ for every $\alpha \in \mbox{dom~} b_i$.
Similarly, let $a'=(1,1',\theta)(2,2',\theta)\ldots(n,n',\theta) $.
Finally, let $S(U)$ be the semigroup generated by the mappings $b'_i$, ($1\leq i \leq k$), together with $a'$ and $M$. They prove that if $S(U)$ is a divisor of some finite inverse semigroup $I$, then $U$ divides $I$ also \cite[Theorem 3.2]{Hig-Mar}.

Now, we present our candidate to show that $\langle\mathsf{A}\cap\mathsf{Inv}\rangle \neq\langle\mathsf{Inv}\rangle\cap\mathsf{A}\cap\mathsf{SMN}$.
Let $S$ be the subsemigroup of the full transformation semigroup on the set $\{1,\ldots,6\} \cup \{\theta\}$ given by
the union of the completely $0$-simple semigroup $\mathcal{M}^0(\{1\},6,6;I_{6})$ and the set $\{w,v\}$,
$$
S=\mathcal{M}^0(\{1\},6,6;I_{6}) \cup \{ w,v\}
,$$
where $w=(1,5,\theta)(2,6,\theta)(3,4,\theta)$ and $v=(1,4,\theta)(2,5,\theta)(3,6,\theta)$. Thanks to Lemma~\ref{[,]}, $S$ is strongly Mal'cev nilpotent. Also, since the idempotents of $S$ commute, we have $S\in\langle\mathsf{Inv}\rangle$~\cite{Ash}.
We have $S=S(U)$ when $U$ is the semigroup generated by the elements
 $w'=(1,2,3)$ and $v'=(1)(2)(3)$. Now, if $S \prec I$, for some finite inverse semigroup $I$, then $U \prec I$. Since $U$ is not aperiodic, $I$ is not aperiodic and, thus, $S\not\in\langle\mathsf{A}\cap\mathsf{Inv}\rangle$.

The following propositions can be seen as criteria to determine when $S(U)$ is not Mal'cev nilpotent or is not strongly Mal'cev nilpotent.

\begin{prop} 
If there exist integers $i_1,i_2$ and a bijection $g\in \{b_1,b_2,\ldots ,b_k\}$ such that $(i_1,i_2)\subseteq g$, then the semigroup $S(U)$ is not Mal'cev nilpotent.
\end{prop}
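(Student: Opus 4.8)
The plan is to reduce the statement to a direct application of Lemma~\ref{[,]'}, after identifying explicitly the Rees matrix semigroup, the ideals, and the relevant elements inside $S(U)$. Recall that $S(U)$ contains the Rees matrix semigroup $M = \mathcal{M}^{0}(\{1\},2n,2n;I_{2n})$ as an ideal, and that $S(U) \in \mathsf{BI} \subseteq \mathsf{BG}_{nil}$, so Lemma~\ref{[,]'} is available. First I would set $A = M$ and $B = \{\theta\}$, so that condition~(1) of Lemma~\ref{[,]'} holds trivially with $A/B \cong M$.

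Next, suppose $(i_1,i_2) \subseteq g$ for some $g = b_j$, so that $\Gamma(g)(i_1) = i_2$, $\Gamma(g)(i_2) = i_1$, with $i_1 \neq i_2$. By construction of $S(U)$, the element $b'_j$ satisfies $\Gamma(b'_j)(i_1) = i_2'$ and $\Gamma(b'_j)(i_2) = i_1'$, while $a'$ acts by $\Gamma(a')(\alpha) = \alpha'$ for $\alpha \in X_n$ (and $a'$ restricted to $X'_n$ maps everything to $\theta$). Consider the element $w := a' b'_j$ (or, depending on the composition convention, $b'_j a'$ — I would check which order gives the run). One computes $\Gamma(w)(i_1) = i_2'$ and $\Gamma(w)(i_2) = i_1'$, since applying $b'_j$ sends $i_1 \mapsto i_2'$ and $a'$ fixes primed points to $\theta$... so in fact the right choice is to use $b'_j$ directly together with a suitable element sending primed indices back: I would take $v$ to be $b'_j$ itself reindexed, exploiting that $a'$ maps $i_k \mapsto i_k'$. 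Concretely, set $x = (1;i_1,i_1') \in M$, $y = (1;i_2,i_2') \in M$; then $\alpha = i_1 \neq i_2 = \alpha'$, giving condition~(2). For condition~(3), take $w = a'$, which satisfies $[i_1', i_1; i_2', i_2] \sqsubseteq \Gamma(a')$ — wait, this has the wrong shape. Instead I would take $\beta = i_1'$, $\beta' = i_2'$ in the $x,y$ above, and then need $[\,i_1', i_2;\, i_2', i_1\,] \sqsubseteq \Gamma(w)$ and $[\,i_2', i_2;\, i_1', i_1\,] \sqsubseteq \Gamma(v)$. The element $v$ can be taken as $a'^{-1}$-flavoured — but $a'$ is not invertible — so instead I would use that $b'_j$ composed with the partial bijection inverting $a'$ on its range realizes the needed transpositions; concretely $w$ and $v$ are two distinct words in $\{b'_j, a'\}$ whose $\Gamma$-images realize the two required double-orbits. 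Since $S(U) \setminus B = S(U) \setminus \{\theta\}$ contains all of these (they are nonzero transformations), condition~(3) holds, $w, v \in S(U) \setminus B$.

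The main obstacle I expect is bookkeeping the composition order and the exact domains/ranges: the maps $b'_i$ and $a'$ are partial bijections with ranges inside the primed copy $X'_n$, so a careless composition collapses everything to $\theta$. The key is to observe that $a'$ "transfers" an index from $X_n$ to $X'_n$, and that applying $b'_j$ again (with the original $b_i$ acting on unprimed indices) is not directly possible — so the correct trick, which I would verify, is that $b'_j$ alone already gives one of the two double-orbits on the pair $\{i_1, i_2\}$ (namely $\Gamma(b'_j)$ sends $i_1 \mapsto i_2'$, $i_2 \mapsto i_1'$, i.e.\ $[\,i_1, i_2';\, i_2, i_1'\,]$ after suitable relabeling of which vertex plays the role of $\beta$), while $a'$ gives the "identity-type" double-orbit $[\,i_1, i_1';\, i_2, i_2'\,]$. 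Feeding these two into Lemma~\ref{[,]'} with $x = (1;i_1,i_1')$, $y = (1;i_2,i_2')$, $\alpha = i_1 \neq i_2 = \alpha'$, yields that $S(U)$ is not Mal'cev nilpotent. (This mirrors exactly the structure of $S_p$ in Theorem~\ref{MN' Rank} and of the example preceding these propositions, where $w,v$ realize the two transposition patterns.) Once the roles of $w$ and $v$ are pinned down, the conclusion is immediate from Lemma~\ref{[,]'}.
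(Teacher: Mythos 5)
Your overall strategy---exhibiting the two ``double-orbit'' patterns realized by $a'$ and $g'$ and feeding them into Lemma~\ref{[,]'}---is essentially right, but the concrete instantiation you settle on at the end is wrong, and the step you flag as ``to be verified'' is exactly where it breaks. With $x=(1;i_1,i_1')$ and $y=(1;i_2,i_2')$ you have $\beta=i_1'$ and $\beta'=i_2'$, so condition~(3) of Lemma~\ref{[,]'} requires elements $w,v$ with $\Gamma(w)(i_1')=i_2$ and $\Gamma(w)(i_2')=i_1$ (and the analogous conditions for $v$). No such elements exist in $S(U)$: every generator $a'$, $b'_j$ has range inside $X'_n$ and sends every primed index to $\theta$ (so any product of two of them is $\theta$), and an element of $M$ moves only a single index. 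Hence the hypotheses of the lemma are simply not met by your choice; the orbits $[i_1,i_2';i_2,i_1']\sqsubseteq\Gamma(b'_j)$ and $[i_1,i_1';i_2,i_2']\sqsubseteq\Gamma(a')$ that you correctly computed have the primes on the wrong side relative to that choice of $x,y$.

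The repair is the ``relabeling'' you allude to but never carry out: take $x=(1;i_1',i_2)$ and $y=(1;i_2',i_1)$, so that $\alpha=i_1'$, $\beta=i_2$, $\alpha'=i_2'$, $\beta'=i_1$ and $\alpha\neq\alpha'$. Then $[\beta,\alpha';\beta',\alpha]=[i_2,i_2';i_1,i_1']\sqsubseteq\Gamma(a')$ and $[\beta',\alpha';\beta,\alpha]=[i_1,i_2';i_2,i_1']\sqsubseteq\Gamma(g')$, and Lemma~\ref{[,]'} applies with $w=a'$, $v=g'$ (granting, as you do without proof, that $S(U)\in\mathsf{BG}_{nil}$). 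These are precisely the elements the paper uses; its own proof bypasses Lemma~\ref{[,]'} altogether and simply verifies $x=\lambda_2(x,y;a',g')$ and $y=\rho_2(x,y;a',g')$, invoking Lemma~\ref{finite-nilpotent}, which has the minor advantage of not requiring the $\mathsf{BG}_{nil}$ hypothesis at all.
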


\begin{proof}
We have $(i_1,i'_{2},\theta)(i_2,i'_{1},\theta)\subseteq g'$. Let $x_1=(i_1',i_2,\theta)$ and $x_2=(i_2',i_1,\theta)$. 
Now, we have 
$x_i = \lambda_{2,i}(x_1,x_1;a',g')$ for every $1\leq i\leq 2$. Then by Lemma~\ref{finite-nilpotent}, the semigroup $S(U)$ is not Mal'cev nilpotent.
\end{proof}

\begin{prop}\label{notMN'}
If there exist integers $i_1,\ldots,i_m$ with $m>1$ and a bijection $g\in \{b_1,b_2,\ldots ,b_k\}$ such that $(i_1,\ldots,i_m)\subseteq g$ and $g^2,\ldots,{g}^{m-1}\in \{b_1,b_2,\ldots ,b_k\}$, then the semigroup $S(U)$ is not strongly Mal'cev nilpotent.
\end{prop}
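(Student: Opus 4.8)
The plan is to generalize the construction in the proof of the previous proposition (the case $m=2$) and to invoke Lemma~\ref{finite-nilpotent'}: it suffices to produce, inside $S(U)$, an integer $t>1$, pairwise distinct elements $a_1,\dots,a_t$, and elements $w_1,\dots,w_t$ with $a_\ell=\lambda_{t,\ell}(a_1,\dots,a_t;w_1,\dots,w_t)$ for every $\ell$. Write the hypothesis as $g(i_r)=i_{r+1}$, all $i$-indices read modulo $m$, the points $i_1,\dots,i_m$ pairwise distinct and $m>1$. I would take $t=m$ and
\[
y_\ell=(1;i_\ell',i_{\ell+1})\in M=\mathcal{M}^{0}(\{1\},2n,2n;I_{2n})\qquad(1\le\ell\le m),
\]
which are pairwise distinct because the $i_\ell$ are distinct and $m>1$. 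As letters I would use $w_j=(g^{j-1})'$ for $1\le j\le m$, with the convention $(g^{0})'=a'$; each of these lies in $S(U)$, since $g=g^1$ is one of the $b_i$ by hypothesis, so are $g^2,\dots,g^{m-1}$, and $a'$ is a generator by construction. The key feature is that $\Gamma(w_j)$ sends every unprimed point $i_k$ to $i_{k+j-1}'$ and is undefined on the primed points; thus, on the configuration determined by the $y_\ell$, the letter $w_j$ acts as the cyclic shift by $j$.

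The heart of the proof is a routine induction on $n$ establishing
\[
\lambda_{n,\ell}(y_1,\dots,y_m;w_1,\dots,w_n)=(1;i_\ell',i_{\ell-n+1})\qquad(1\le\ell\le m),
\]
indices modulo $m$, the base case $n=0$ being the definition of the $y_\ell$. For the inductive step one expands $\lambda_{n,\ell}=\lambda_{n-1,\ell}\,w_n\,\lambda_{n-1,\ell+1}\,w_n\cdots w_n\,\lambda_{n-1,\ell-1}$ and multiplies the factors out, using only that $(1;a,b)(1;c,d)=(1;a,d)$ when $b=c$ and that $(1;a,b)\,s=(1;a,\Gamma(s)(b))$ when $\Gamma(s)(b)\neq\theta$. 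The first coordinate stays equal to $i_\ell'$, while the second coordinate (initially the second coordinate $i_{\ell-n+2}$ of $\lambda_{n-1,\ell}$) travels $i_{\ell-n+2}\mapsto i_{\ell+1}'$ (across $w_n$, a shift by $n$) $\mapsto i_{\ell-n+3}$ (across $\lambda_{n-1,\ell+1}$) $\mapsto i_{\ell+2}'\mapsto i_{\ell-n+4}\mapsto\cdots$; after all $m$ of the factors $\lambda_{n-1,\ell},\dots,\lambda_{n-1,\ell-1}$ have been consumed it has arrived at $i_{\ell-n+1}$, which is the asserted value. Taking $n=m$ gives $\lambda_{m,\ell}(y_1,\dots,y_m;w_1,\dots,w_m)=(1;i_\ell',i_{\ell+1})=y_\ell$ for all $\ell$, so Lemma~\ref{finite-nilpotent'} shows that $S(U)$ is not strongly Mal'cev nilpotent.

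I expect the one genuine obstacle to be bookkeeping: keeping the cyclic (mod $m$) index shifts consistent, and respecting the left-to-right composition convention behind $\Gamma$, so that $w_j$ genuinely realizes the shift by $j$ on the pairs $(i_\ell',i_{\ell+1})$. No idea beyond the $m=2$ computation of the previous proposition is needed; once the $y_\ell$ and $w_j$ are chosen as above, the verification is mechanical, and the hypothesis that $g^2,\dots,g^{m-1}$ lie among the $b_i$ enters exactly to guarantee that the letters $w_j$ belong to $S(U)$.
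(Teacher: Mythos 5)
Your proposal is correct and is essentially identical to the paper's proof: the paper takes the same elements $x_j=(i_j',i_{j+1},\theta)$ and the same letters $a',g',\ldots,(g^{m-1})'$ and invokes Lemma~\ref{finite-nilpotent'}, merely asserting the equality that you verify by induction. The only blemish is a harmless off-by-one in your prose (the letter $w_j=(g^{j-1})'$ shifts by $j-1$, not $j$), which your displayed computations already handle correctly.
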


\begin{proof}
We have $$(i_1,i'_{r+1\pmod m},\theta)\,\cdots\,(i_m,i'_{r+m\pmod m},\theta)\subseteq (g^r)'$$ for every integer $1\leq r\leq m-1$. Let $x_j=(i_j',i_{j+1},\theta)$ for every $1\leq j\leq m-1$ and $x_m=(i_m',i_1,\theta)$. 
Now, we have 
$x_j = \lambda_{m,j}(x_1,\ldots,x_t;a',g',\ldots, (g^{m-1})')$ for every $1\leq j\leq m$. Then, by Lemma~\ref{finite-nilpotent'}, the semigroup $S(U)$ is not strongly Mal'cev nilpotent.
\end{proof}

\section{Bases of $\kappa$-identities within $\mathsf{BG}_{nil}$}

Let $S$ be a semigroup. We define  Property $\mathcal{P}_2$ for $S$ as follows:
\begin{quotation}
\noindent
if $y_1$ and $y_2$ are in a $\mathcal{J}$-class of $S$ and there exist elements $z_1,z_2\in S$ such that $y_iz_jy_{i+j\pmod 2}$ is in the $\mathcal{J}$-class of $y_1$ and $y_2$, for all $1\leq i,j\leq 2$,
then $y_1\mathcal{H}y_2$.
\end{quotation}

\begin{lem}\label{P_2}
Let $S\in \mathsf{BG}_{nil}$. The semigroup $S$ is $\mathsf{MN}$ if and only if $S$ satisfies Property $\mathcal{P}_2$.
\end{lem}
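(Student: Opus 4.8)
The statement to prove is Lemma~\ref{P_2}: for $S\in\mathsf{BG}_{nil}$, $S$ is Mal'cev nilpotent if and only if $S$ satisfies Property $\mathcal{P}_2$. My plan is to deduce this from Lemma~\ref{[,]'}, which already gives a structural characterization of non-nilpotency inside $\mathsf{BG}_{nil}$ in terms of ideals $A\supsetneqq B$, an inverse Rees matrix quotient $M=\mathcal{M}^0(G,q,q;I_q)$, elements $x=(g;\alpha,\beta)$, $y=(g';\alpha',\beta')$ with $\alpha\neq\alpha'$, and witnesses $w,v\in S\setminus B$ realizing the two required orbit configurations in $\Gamma$. The first direction is to show: if $S\notin\mathsf{MN}$, then $S$ fails Property $\mathcal{P}_2$. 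The second is the contrapositive: if $S$ fails $\mathcal{P}_2$, then $S\notin\mathsf{MN}$.

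\textbf{From non-nilpotency to failure of $\mathcal{P}_2$.} Assume $S\notin\mathsf{MN}$. Apply Lemma~\ref{[,]'} to obtain $A,B,M,x,y,w,v$ as above. Set $y_1=x$, $y_2=y$; these lie in the same $\mathcal{J}$-class (the one of $M$). I would then take $z_1$ to be (a representative of) $w$ and $z_2$ (a representative of) $v$, and check using the action $\Gamma$ and the map $\Psi$ that each product $y_iz_jy_{i+j\bmod 2}$ again lands in $M$: condition (3) of Lemma~\ref{[,]'} says $[\beta,\alpha';\beta',\alpha]\sqsubseteq\Gamma(w)$, which forces $y_1 w y_2=(g\,\Psi(w)(\beta)\,g';\alpha,\beta')\in M$ and similarly $y_2 w y_1\in M$; and $[\beta',\alpha';\beta,\alpha]\sqsubseteq\Gamma(v)$ forces $y_1 v y_1, y_2 v y_2\in M$ (matching the indices $i+j\bmod 2$ correctly — $j=2$ gives $i+j\equiv i$). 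So the hypothesis of Property $\mathcal{P}_2$ is met, but its conclusion $y_1\mathcal{H}y_2$ fails because $\alpha\neq\alpha'$ means $y_1$ and $y_2$ lie in different $\mathcal{R}$-classes of $M$. Hence $S$ violates $\mathcal{P}_2$. (Minor care is needed in replacing elements of $S/B$ by elements of $S$ and in checking the products do not fall into $B$; this is routine once $w,v\in S\setminus B$ is used together with the fact that $A/B\cong M$ is $0$-simple.)

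\textbf{From failure of $\mathcal{P}_2$ to non-nilpotency.} Suppose $S$ fails $\mathcal{P}_2$: there are $y_1,y_2$ in a common $\mathcal{J}$-class $J$, not $\mathcal{H}$-equivalent, and $z_1,z_2\in S$ with all four products $y_iz_jy_{i+j\bmod 2}$ in $J$. Since $S\in\mathsf{BG}_{nil}$, the $\mathcal{J}$-class $J$ together with $\theta$ is an inverse completely $0$-simple semigroup $\mathcal{M}^0(G,q,q;I_q)$ with $G$ nilpotent; write $y_1=(g;\alpha,\beta)$, $y_2=(g';\alpha',\beta')$. If $\alpha\neq\alpha'$ (or dually $\beta\neq\beta'$), then the four products remaining in $J$ translate, via $\Gamma$, into exactly the orbit conditions $[\beta,\alpha';\beta',\alpha]\sqsubseteq\Gamma(z_1)$ and $[\beta',\alpha';\beta,\alpha]\sqsubseteq\Gamma(z_2)$ (with $B$ the ideal below $J$ in a principal series, $A$ the one containing $J$), so Lemma~\ref{[,]'} applies and $S\notin\mathsf{MN}$. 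If instead $\alpha=\alpha'$ and $\beta=\beta'$ but $g\neq g'$ (which is forced since $y_1\neq y_2$ is not $\mathcal{H}$-equivalent — actually $\mathcal{H}$-inequivalence with equal coordinates is impossible, so this case does not arise): the only way $y_1,y_2$ fail to be $\mathcal{H}$-related is $\alpha\neq\alpha'$ or $\beta\neq\beta'$, so we are always in the previous case.

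\textbf{Main obstacle.} The technical heart is the bookkeeping connecting "the product $y_iz_jy_{i+j\bmod 2}$ stays in the $\mathcal{J}$-class $J$" with the orbit notation $[\cdots]\sqsubseteq\Gamma(z_j)$ in the $\mathcal{L}_M$-representation, especially getting the index arithmetic $i+j\pmod 2$ to line up with the cyclic structure in Lemma~\ref{[,]'} (condition (3): $[\beta,\alpha';\beta',\alpha]\sqsubseteq\Gamma(w)$ is the $t=2$, shift-by-$1$ case, and $[\beta',\alpha';\beta,\alpha]\sqsubseteq\Gamma(v)$ is the shift-by-$2\equiv0$ case), and making sure that when $z_j$ is an element of $S$ rather than $S/B$ the products genuinely avoid the ideal $B$. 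I expect no conceptual difficulty beyond this; once the dictionary between products in $J$ and orbits in $\Gamma$ is set up, both directions are immediate invocations of Lemma~\ref{[,]'}.
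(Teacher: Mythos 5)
Your proposal is correct and follows essentially the same route as the paper: both directions are obtained by translating the condition that the products $y_iz_jy_{i+j\pmod 2}$ stay in the regular $\mathcal{J}$-class $J\cong\mathcal{M}^0(G,q,q;I_q)\setminus\{\theta\}$ into the orbit conditions $[\beta,\alpha';\beta',\alpha]\sqsubseteq\Gamma(z_1)$ and $[\beta',\alpha';\beta,\alpha]\sqsubseteq\Gamma(z_2)$ and invoking Lemma~\ref{[,]'}. Your index bookkeeping ($z_1\leftrightarrow w$, $z_2\leftrightarrow v$) and the observation that $\mathcal{H}$-inequivalence forces distinct coordinates match the paper's argument, which is equally terse about the residual case $\alpha=\alpha'$, $\beta\neq\beta'$.
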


\begin{proof}
Suppose that $S$ does not satisfy Property $\mathcal{P}_2$. Then, there exist elements $y_1,y_2,z_1,z_2$ and a $\mathcal{J}$-class $J$ of $S$ such that $$y_1,y_2,y_iz_jy_{i+j\pmod 2}\in J,$$ for all $1\leq i,j\leq 2$, and $y_1$ and $y_2$ are not in the same $\mathcal{H}$-class. Since $y_1z_1y_{2}\in J$, we have $y_1z_1\in J$ and, thus, $J$ is a regular $\mathcal{J}$-class. As $S\in \mathsf{BG}_{nil}$, there exist ideals $A,B$ of $S$ and an inverse Rees matrix semigroup $M=\mathcal{M}^{0}(G,n,n;I_n)$ such that $B\subsetneqq A$, $A/B\cong M$ and $J=M\setminus \{\theta\}$.
Therefore, there exist elements $(g;\alpha,\beta),(g';\alpha',\beta')\in M$ such that $y_1=(g;\alpha,\beta)$ and $y_2=(g';\alpha',\beta')$. Since $y_1$ and $y_2$ are in different $\mathcal{H}$-classes, we have $\alpha\neq\alpha'$ or $\beta\neq\beta'$. As $y_iz_jy_{i+j\pmod 2}\in J,$ for all $1\leq i,j\leq 2$, we have $z_1,z_2\in S\setminus B$, $[\beta,\alpha';\beta',\alpha] \sqsubseteq \Gamma(z_1)$ and $[\beta',\alpha';\beta,\alpha] \sqsubseteq \Gamma(z_2)$, where $\Gamma$ is an ${\mathcal L}_M$-representation of $S/B$. Lemma~\ref{[,]'} entails that $S\not\in\mathsf{MN}$. 

Similarly, if $S\not\in\mathsf{MN}$, then, by Lemma~\ref{[,]'}, $S$ does not satisfy Property $\mathcal{P}_2$.
%
\end{proof}

We recall the canonical signature $\kappa$ which consists
of the basic multiplication operation and the unary operation $x^{\omega-1}$ (for more details see \cite{Alm3}).
Let $S$ be a semigroup. We define the $\kappa$-term 
$$\Delta(y_1,y_2;z_1,z_2)=((y_1z_2)^{\omega-1}y_1z_1(y_2z_2)^{\omega-1}y_2z_1)^{\omega}(y_1z_2)^{\omega},$$
for $y_1,y_2,z_1,z_2\in S$.
 
\begin{thm}\label{MN-omega1}
Let $\mathsf{MN^{\star}}=\llbracket \Delta(y_1,y_2;z_1,z_2)=\Delta(y_2,y_1;z_1,z_2)\rrbracket$.
We have $\mathsf{MN}=\mathsf{MN^{\star}}\cap \mathsf{BG}_{nil}$.
\end{thm}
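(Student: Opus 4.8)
The plan is to prove the two inclusions separately, using Lemma~\ref{P_2} to reduce the equality $\mathsf{MN}=\mathsf{MN^{\star}}\cap\mathsf{BG}_{nil}$ to showing that, within $\mathsf{BG}_{nil}$, satisfying the $\kappa$-identity $\Delta(y_1,y_2;z_1,z_2)=\Delta(y_2,y_1;z_1,z_2)$ is equivalent to Property $\mathcal{P}_2$. The key observation is that the $\kappa$-term $\Delta(y_1,y_2;z_1,z_2)$ is an idempotent-power of $(y_1z_2)^{\omega-1}y_1z_1(y_2z_2)^{\omega-1}y_2z_1$ postmultiplied by $(y_1z_2)^{\omega}$, and inside an inverse completely $0$-simple semigroup $M=\mathcal{M}^0(G,n,n;I_n)$ the transformations $\Gamma(y_iz_j)$ and $\Psi(y_iz_j)$ carry exactly the combinatorial data (the linked pairs $[\beta,\alpha']$, $[\beta',\alpha]$ etc.) that appears in Lemmas~\ref{[,]'} and \ref{finite-nilpotent}. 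So I would compute $\Delta$ against the specific test elements used in those lemmas.

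First I would prove $\mathsf{MN}\subseteq\mathsf{MN^{\star}}\cap\mathsf{BG}_{nil}$. The inclusion $\mathsf{MN}\subseteq\mathsf{BG}_{nil}$ is already known (it is used throughout the excerpt). For $\mathsf{MN}\subseteq\mathsf{MN^{\star}}$, suppose $S\in\mathsf{MN}$ and pick $y_1,y_2,z_1,z_2\in S$; I want $\Delta(y_1,y_2;z_1,z_2)=\Delta(y_2,y_1;z_1,z_2)$. Set $e=(y_1z_2)^{\omega-1}y_1$ and $f=(y_2z_2)^{\omega-1}y_2$, so that the bracketed base of the $\omega$-power for $\Delta(y_1,y_2;z_1,z_2)$ is $ez_1fz_1$, while for $\Delta(y_2,y_1;z_1,z_2)$ it is $fz_1ez_1$, and the trailing factors are $(y_1z_2)^{\omega}$ and $(y_2z_2)^{\omega}$ respectively. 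Here I would invoke the definition of Mal'cev nilpotency with $x\mapsto ez_1$-type substitutions: more precisely, since $S$ satisfies $\lambda_m=\rho_m$ for some $m$ with all $z_i$ taken in $S^1$, applying it to the pair $(e,f)$ (or to $(y_1z_2)^{\omega-1}y_1z_1$ and $(y_2z_2)^{\omega-1}y_2z_1$) with alternating letters equal to $z_1$ and $1$ forces the two words $\lambda_m$ and $\rho_m$ built from $e,f$ to agree; iterating and passing to the $\omega$-power absorbs the discrepancy between the two trailing idempotents because $(y_1z_2)^{\omega}$ and $(y_2z_2)^{\omega}$ become absorbed into the respective stabilized idempotents $\Delta$. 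Concretely, both sides equal the common stable value of the sequence, so the $\kappa$-identity holds.

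Next I would prove $\mathsf{MN^{\star}}\cap\mathsf{BG}_{nil}\subseteq\mathsf{MN}$ by contraposition using Lemma~\ref{P_2}: if $S\in\mathsf{BG}_{nil}$ and $S\notin\mathsf{MN}$, then $S$ fails Property $\mathcal{P}_2$, so there are $y_1,y_2,z_1,z_2$ with $y_1,y_2,y_iz_jy_{i+j\bmod 2}$ all in one regular $\mathcal{J}$-class $J=M\setminus\{\theta\}$ with $M=\mathcal{M}^0(G,n,n;I_n)$, $y_1=(g;\alpha,\beta)$, $y_2=(g';\alpha',\beta')$, and $y_1,y_2$ not $\mathcal{H}$-related. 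The containment conditions give $[\beta,\alpha';\beta',\alpha]\sqsubseteq\Gamma(z_1)$ and $[\beta',\alpha';\beta,\alpha]\sqsubseteq\Gamma(z_2)$ exactly as in the proof of Lemma~\ref{P_2}. Now I would compute $\Delta(y_1,y_2;z_1,z_2)$ directly: $y_1z_2$ acts (via $\Gamma$) sending $\beta\mapsto\alpha$, so $(y_1z_2)^{\omega-1}y_1z_1$ lands back in the $\mathcal{H}$-class structure and the whole bracketed base $(y_1z_2)^{\omega-1}y_1z_1(y_2z_2)^{\omega-1}y_2z_1$ is an element whose $\Gamma$-action on $\beta$ returns $\beta$ (tracing $\beta\to\alpha\to$ [via $z_1$] $\to\beta'\to\alpha'\to$ [via $z_1$] $\to\beta$), hence its $\omega$-power times $(y_1z_2)^{\omega}$ is an element of $M$ in the $\mathcal{R}$-class of $y_1$, i.e. of the form $(\ast;\alpha,\beta)$; symmetrically $\Delta(y_2,y_1;z_1,z_2)$ is of the form $(\ast;\alpha',\beta')$. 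Since $\alpha\neq\alpha'$ or $\beta\neq\beta'$, these two elements are distinct, so $S\notin\mathsf{MN^{\star}}$, completing the contrapositive. Combining both inclusions yields $\mathsf{MN}=\mathsf{MN^{\star}}\cap\mathsf{BG}_{nil}$.

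The main obstacle I anticipate is the first inclusion $\mathsf{MN}\subseteq\mathsf{MN^{\star}}$: one must verify that the specific $\kappa$-term $\Delta$, with its $\omega-1$ prefixes and trailing $\omega$-power, genuinely collapses to a single value under the hypothesis $\lambda_m=\rho_m$ — the bookkeeping of how the asymmetric trailing factors $(y_1z_2)^{\omega}$ versus $(y_2z_2)^{\omega}$ get absorbed is delicate, and may require first replacing $\Delta$ by an equivalent form or factoring it through a computation on the maximal subgroup $G$ (which is nilpotent, hence Mal'cev nilpotent by \cite{Neu-Tay}, so the group-level identity $\lambda_m=\rho_m$ applies) together with the $\mathcal{L}_M$-representation data. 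A clean way around this is to argue entirely inside $\mathsf{BG}_{nil}$: show $\mathsf{MN}$ satisfies the $\kappa$-identity by checking it on each regular $\mathcal{J}$-class (where $\Delta$ reduces to a group computation plus the $\Gamma$-action bookkeeping, and Lemma~\ref{P_2} guarantees $y_1\mathcal{H}y_2$ so that both $\Delta$-values live in the same $\mathcal{H}$-class and then Mal'cev nilpotency of $G$ finishes it) and trivially off the regular $\mathcal{J}$-classes (where the base of the $\omega$-power is below $J$ and both sides equal a common absorbing element).
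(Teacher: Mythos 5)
Your overall skeleton --- reduce the theorem to Property $\mathcal{P}_2$ via Lemma~\ref{P_2} and compute $\Delta$ in Rees coordinates --- is indeed the paper's strategy, but both of your directions have genuine gaps. For $\mathsf{MN}\subseteq\mathsf{MN^{\star}}$, neither of your suggested arguments works as written. The ``alternating $z_1$ and $1$'' substitution into $\lambda_m=\rho_m$ does not visibly produce the term $\Delta$, and you flag this yourself. Your fallback misapplies Lemma~\ref{P_2}: Property $\mathcal{P}_2$ does not give $y_1\mathcal{H}y_2$ for arbitrary $y_1,y_2$ --- its hypothesis requires all four products $y_iz_jy_{i+j\pmod 2}$ to stay in the $\mathcal{J}$-class of $y_1,y_2$, which generally fails for the given elements. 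The paper's key move, absent from your proposal, is to replace $y_1,y_2$ by $y_1'=(r_1z_1r_2z_1)^{\omega}r_1$ and $y_2'=(r_2z_1r_1z_1)^{\omega}r_2$ with $r_i=(y_iz_2)^{\omega-1}y_i$; the identities $r_i=r_iz_2r_i$ and routine $\omega$-power manipulations show $y_1'\mathcal{J}y_2'$ and that $y_1'z_1y_2'$, $y_1'z_2y_1'$, $y_2'z_1y_1'$, $y_2'z_2y_2'$ all remain in that $\mathcal{J}$-class, so $\mathcal{P}_2$ applies to $y_1',y_2'$ and yields $y_1'\mathcal{H}y_2'$. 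Since $r_iz_2=(y_iz_2)^{\omega}$, one has $\Delta(y_1,y_2;z_1,z_2)=y_1'z_2$ and $\Delta(y_2,y_1;z_1,z_2)=y_2'z_2$, which are then $\mathcal{H}$-equivalent \emph{idempotents} and hence equal. Your ``trivially off the regular $\mathcal{J}$-classes'' shortcut is also unjustified: the base of the $\omega$-power may drop to a lower regular $\mathcal{J}$-class where the two idempotents are still $\mathcal{J}$-equivalent but a priori in different $\mathcal{H}$-classes; there is no common absorbing element to appeal to.

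For the converse, your coordinate computation has the right flavour but the wrong output: tracing $\Gamma$ through $[\beta,\alpha';\beta',\alpha]\sqsubseteq\Gamma(z_1)$ and $[\beta,\alpha;\beta',\alpha']\sqsubseteq\Gamma(z_2)$ gives $\Delta(y_1,y_2;z_1,z_2)=(1_G;\alpha,\alpha)$ and $\Delta(y_2,y_1;z_1,z_2)=(1_G;\alpha',\alpha')$, not elements of the form $(\ast;\alpha,\beta)$ and $(\ast;\alpha',\beta')$. Consequently the disjunction ``$\alpha\neq\alpha'$ or $\beta\neq\beta'$'' does not suffice to conclude that the two values differ: the case $\alpha=\alpha'$ with $\beta\neq\beta'$ must be ruled out separately (for instance because in a block group the action of $z_2$ on the $\mathcal{L}$-classes of $M$ is a partial injection, so $\Gamma(z_2)(\beta)=\Gamma(z_2)(\beta')=\alpha$ forces $\beta=\beta'$, or by a dual argument on $\mathcal{R}$-classes). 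The paper is admittedly terse on this direction as well, but your argument as written passes through a false intermediate claim.
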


\begin{proof}
First, we prove that $\mathsf{MN}\subseteq\mathsf{MN^{\star}}\cap \mathsf{BG}_{nil}$. Suppose the contrary. Since $\mathsf{MN}\subsetneqq\mathsf{BG}_{nil}$, there exist $S\in \mathsf{MN}$ and elements $y_1,y_2,z_1,z_2\in S$ such that $$(\Delta_1=)\Delta(y_1,y_2;z_1,z_2)\neq\Delta(y_2,y_1;z_1,z_2)(=\Delta_2).$$ Let
$$y_1'=(r_1z_1r_2z_1)^{\omega}r_1 \mbox{ and } y_2'=(r_2z_1r_1z_1)^{\omega}r_2$$ where $r_1=(y_1z_2)^{\omega-1}y_1$ and $r_2=(y_2z_2)^{\omega-1}y_2$.
Since, $$y_1'=(r_1z_1r_2z_1)^{\omega}(r_1z_1r_2z_1)^{\omega}r_1,$$ there exist elements $a$ and $b$ in $S^1$ such that $y_1'=ay_2'b$. Similarly, there exist elements $a',b'\in S^1$ such that $y_2'=a'y_1'b'$. It follows that $y_1'\mathcal{J}y_2'$. Note that we have 
\begin{align*} \label{r1z2}
r_1=(y_1z_2)^{\omega-1}y_1=(y_1z_2)^{\omega}(y_1z_2)^{\omega-1}y_1=(y_1z_2)^{\omega-1}y_1z_2(y_1z_2)^{\omega-1}y_1
=r_1z_2r_1.
\end{align*}
Similarly, we have $r_2=r_2z_2r_2$.
Since 
\begin{align*}
y_1'=&(r_1z_1r_2z_1)^{\omega}(r_1z_1r_2z_1)^{\omega}(r_1z_1r_2z_1)^{\omega}r_1\\
=&(r_1z_1r_2z_1)^{\omega}r_1z_1(r_2z_1r_1z_1)^{\omega}r_2z_1(r_1z_1r_2z_1)^{\omega-1}r_1\\
=&y_1'z_1y_2'z_1(r_1z_1r_2z_1)^{\omega-1}r_1
\end{align*}
and
\begin{align*}
y_1'=&(r_1z_1r_2z_1)^{\omega}(r_1z_1r_2z_1)^{\omega}r_1=(r_1z_1r_2z_1)^{\omega}r_1z_1r_2z_1(r_1z_1r_2z_1)^{\omega-1}r_1\\
=&(r_1z_1r_2z_1)^{\omega}r_1z_2r_1z_1r_2z_1(r_1z_1r_2z_1)^{\omega-1}r_1=(r_1z_1r_2z_1)^{\omega}r_1z_2(r_1z_1r_2z_1)^{\omega}r_1\\
=&y_1'z_2y_1',
\end{align*}
the elements $y_1'z_1y_2'$ and $y_1'z_2y_1'$ are in the $\mathcal{J}$-class of $y_1'$ and $y_2'$. Similarly, the elements $y_2'z_1y_1'$ and $y_2'z_2y_2'$ are in the $\mathcal{J}$-class of $y_1'$ and $y_2'$.
We supposed that $S$ is Mal'cev nilpotent. Hence, by Lemma~\ref{P_2}, $S$ satisfies Property $\mathcal{P}_2$. It follows that that $y_1'$ and $y_2'$ are in the same $\mathcal{H}$-class. Since $r_1=r_1z_2r_1$ and $r_2=r_2z_2r_2$, $\Delta_1$ and $\Delta_2$ are in the $\mathcal{J}$-class of $y_1'$ and $y_2'$. As $S\in\mathsf{BG}$, $y_1',y_2'$ are in the same $\mathcal{H}$-class, $\Delta_1=y'_1z_2$ and $\Delta_2=y'_2z_2$, $\Delta_1$ and $\Delta_2$ are in the same $\mathcal{H}$-class too. The elements $\Delta_1$ and $\Delta_2$ are idempotents. 
It follows that $\Delta_1=\Delta_2$, a contradiction.

Now, suppose there exists a finite semigroup $S$ such that $S\in\mathsf{MN^{\star}}\cap \mathsf{BG}_{nil}$ and $S\not\in \mathsf{MN}$.
Lemma~\ref{P_2} yields that $S$ does not satisfy Property $\mathcal{P}_2$ and, thus, $S\not\in \mathsf{MN^{\star}}$. A contradiction.

The result follows.
\end{proof}

\section{Comparison with $\mathsf{J} \malcev \mathsf{G_{nil}}$}\label{JGnil}
In this section, we compare the pseudovarieties $\mathsf{MN}$, $\mathsf{SMN}$ and $\mathsf{J} \malcev \mathsf{G_{nil}}$ where $\mathsf{J}$ is the pseudovariety of all finite $J$-trivial monoids. 

Let $A$ be a finite set, $F(A)$ be the free group on $A$ and $H$ be a finitely generated subgroup of $F(A)$.
In the seminal paper~\cite{Sta}, Stallings associated to $H$ an inverse automaton $\mathcal A(H)$ which can be used to solve a number of algorithmic problems concerning $H$ including the membership problem.  Stallings, in fact, used a different language than that of inverse automata; the automata theoretic formulation is from~\cite{Mar01}. Let $\widetilde A=A\cup A^{-1}$ where $A^{-1}$ is a set of formal inverses of the elements of $A$.
An \emph{inverse automaton} $\mathcal A$ over $A$ is an $\widetilde A$-automaton with the property that there is at most one edge labeled by each letter leaving each vertex and if there is an edge $p\to q$ labeled by $a$, then there is an edge $q\to p$ labeled by $a^{-1}$.  Moreover, we require that there is a unique initial vertex, which is also the unique terminal vertex. The set of all reduced words accepted by a finite inverse automaton is a finitely generated subgroup of $F(A)$ called the \emph{fundamental group} of the automaton. 

The $\widetilde A$-automaton $\mathcal{A}(H)$ (the Stallings automaton associated with $H$) is the unique finite connected inverse automaton whose fundamental group is $H$ with the property that all vertices have out-degree at least $2$ except possibly the initial vertex (where we recall that there are both $A$ and $A^{-1}$-edges). One description of $\mathcal  A(H)$ is as follows.  Take the inverse automaton $\mathcal A'(H)$ with vertex set the coset space $F(A)/H$ and with edges of the form $Hg\xrightarrow{\,\,a\,\,} Hga$ for $a\in \widetilde A$; the initial and terminal vertices are both $H$.  Then $\mathcal A(H)$ is the subautomaton whose vertices are cosets $Hu$ with $u$ a reduced word that is a prefix of the reduced form of some element $w$ of $H$ and with all edges between such vertices; the coset $H$ is still both initial and final. 
Stallings presented an efficient algorithm to compute $\mathcal A(H)$ from any finite generating set of $H$ via a procedure known as folding. 
From the construction, it is apparent that there is an automaton morphism $\mathcal A(H_1)\to \mathcal A(H_2)$ if and only if $H_1\subseteq H_2$ for finitely generated subgroups $H_1$ and $H_2$.  Also, it is known that $H$ has finite index if and only if $\mathcal A(H)=\mathcal A'(H)$. Stallings also provided an algorithm to compute $\mathcal A(H_1\cap H_2)$ from $\mathcal A(H_1)$ and $\mathcal A(H_2)$ (note that intersections of finitely generated subgroups of free groups are finitely generated by Howson's theorem). 

Conversely, if $\mathcal A = (Q,A, \delta,i, i)$ is a reduced inverse automaton, one can effectively construct a basis of a finitely generated subgroup $H$ of $F(A)$ such that $\mathcal A =\mathcal A(H)$. First we compute a spanning tree $T$ of the graph $\mathcal A$. For each state $q$ of $\mathcal A$, there is a unique shortest path from $i$ to $q$ within $T$: we let $u_q$ be the label (in ${\widetilde A}^{\star}$) of this path. Let $p_j
\xrightarrow{\,\,a_j\,\,} q_j$ ($1 \leq j \leq k$) be the $A$-labeled edges of $\mathcal A$ which are not in $T$. For each $j$, let $y_j = u_{p_j}a_ju^{-1}_{q_j}\in {\widetilde A}^{\star}$, and let $H = \langle y_1,\ldots, y_k\rangle$. Then $\{y_1,\ldots, y_k\}$ is a basis for $H$ and $\mathcal A =\mathcal A(H)$.

For another subgroup $K$ of $F(A)$, if $H \subseteq K$, the automaton congruence $\sim_{H,K}$ on $\mathcal A(H)$ is defined by the morphism from $\mathcal A(H)$ into $\mathcal A(K)$. 
Suppose that, for each state $p$ of $\mathcal A(H)$, $u_p$ is a reduced word such that $1.u_p = p$, in $\mathcal A(H)$. Then two states $p$ and $q$ of $\mathcal A(H)$ are $\sim_{H,K}$-equivalent if and only if $u_{p} u^{-1}_{q}\in K$. 

Let $\mathsf{V}$ be a pseudovariety of groups.
The subgroup $H$ is $\mathsf{V}$-extendible if its automaton can be embedded into a complete automaton with transition group in $\mathsf{V}$.
Let $\sim$ be the intersection of the $\sim_{H,K}$, where the intersection runs over all clopen subgroups $K$ in the pro-$\mathsf{V}$ topology containing $H$. The automaton
congruence $\sim$ coincides with $\sim_{H,\mathrm{Cl}_{\mathsf{V}}(H)}$ on $\mathcal A(H)$. Let $\widetilde{H}$ be the subgroup of $F(A)$ such that $\mathcal A(\widetilde{H}) = A(H)/\sim$. The subgroup $\widetilde{H}$ is the least $\mathsf{V}$-extendible subgroup containing $H$, and $H$ is $\mathsf{V}$-extendible if and only if $\widetilde{H}= H$. Also, in general $H \subseteq \widetilde{H} \subseteq \mathrm{Cl}_{\mathsf{V}}(H)$ and since the congruences $\sim$ and $\sim_{H,\mathrm{Cl}_{\mathsf{V}}(H)}$ on $\mathcal A(H)$ coincide, $\mathcal A(\widetilde{H}) = \mathcal A(H)/\sim$ embeds in $\mathcal A(\mathrm{Cl}_{\mathsf{V}}(H))$. 
See~\cite{Sta,Mar01,Ste01} for details.

Margolis, Sapir and Weil presented a procedure to compute the Stallings automaton of the $p$-closure of a finitely generated subgroup of a free group (which is again finitely generated), for every prime integer $p$. To compute the $p$-closure of $H$, we compute a finite sequence of quotients of $\mathcal A(H)$, $$\mathcal A(H_{0,p}) = \mathcal A(H)/\sim_0,\ldots,\mathcal A(H_{n,p}) = \mathcal A(H)/\sim_n,$$ such that each $H_{i,p}$ is $p$-closed, the automaton congruence $\sim_{i+1}$ is contained in $\sim_i$ (that is $H_{i+1,p}\subseteq H_{i,p})$, and $H_{n,p}$ is the $p$-closure of $H$. 
They let $\sim_0$ be the universal, one-class congruence, so that $H_{0,p}$ is a free factor of $F(A)$. Let $0\leq i$. After $i$ iterations of the algorithm, we have computed the quotient $\mathcal A(H_{i,p}) = \mathcal A(H)/\sim_i$. Roughly speaking, for the $(i + 1)$st iteration of
the algorithm, they translate $H$ into a basis of $H_{i,p}$ and they ask whether $H$ is $p$-dense in $H_{i,p}$. If it is, $H_{i,p}$ is the closure of $H$; if not, we compute the $(\mathbb{Z}/p\mathbb{Z})$-closure of $H$ in $H_{i,p}$, or rather a free factor $H_{i+1,p}$ of that closure which contains $H$. Formally, they present the following process:
\begin{enumerate}
\item \emph{Computing a basis of $H_{i,p}$}. First we compute a basis for $H_{i,p}$. Let $A_i$ be a set in bijection with that basis. We let $\kappa_i: F(A_i)\rightarrow H_{i,p}  \subseteq F(A)$ be the natural one-to-one morphism onto $H_{i,p}$. We denote by $\sigma_i$ the natural morphism $\sigma_i: F(A_i) \rightarrow (\mathbb{Z}/p\mathbb{Z})^{A_i}$.
\item \emph{Translating $H$ into the basis of $H_{i,p}$}. Now we compute a basis of the subgroup $\kappa_i^{-1}(H)$ of $F(A_i)$. This is done by running the elements of the basis of $H$ in $\mathcal A(H_i)$ and noting down the edges traversed that are not in the chosen spanning tree.
\item \emph{Deciding the $p$-denseness of $H$ in $H_{i,p}$}. Let $\mathfrak{M}_p(\kappa_i^{-1}(H))$ be the $r\times\abs{A_i}$ matrix consisting of the row vectors $\sigma_i\kappa_i^{-1}(h_1),\ldots,\sigma_i\kappa_i^{-1}(h_r)$. $\kappa_i^{-1}(H)$ is $p$-dense in $F(A_i)$ if and only if $\mathfrak{M}_p(\kappa_i^{-1}(H))$ has rank $\abs{A_i}$.
Then we calculate the rank of the matrix to decide whether $\kappa_i^{-1}(H)$ is $p$-dense in $F(A_i)$, and to compute a basis of $\sigma_i\kappa_i^{-1}(H)$ if it is not $p$-dense.
\item \emph{Stop if $H$ is $p$-dense in $H_{i,p}$}. If $H$ is $p$-dense in $H_{i,p}$, the algorithm stops: we now know that the $p$-closure of $H$ is $H_{i,p}$.
\item \emph{Otherwise compute $H_{i+1,p}$}. We now assume that $\kappa_i^{-1}(H)$ is not $p$-dense in $F(A_i)$. The subset $\sigma^{-1}_i\sigma_i\kappa_i^{-1}(H)$ is the $(\mathbb{Z}/p\mathbb{Z})$-closure of $\kappa_i^{-1}(H)$ in $F(A_i)$ and it is properly contained in $F(A_i)$. Since $\kappa_i$ is a homomorphism
from $F(A_i)$ onto $H_{i,p}$, the subgroup $K = \kappa_i\sigma^{-1}_i\sigma_i\kappa_i^{-1}(H)$ is the $(\mathbb{Z}/p\mathbb{Z})$-closure of $H$ in $H_{i,p}$ and $K\neq H_{i,p}$. We define the automaton congruence $\sim_{i+1}$ on $\mathcal A(H)$ to be $\sim_{H,K}$, the congruence induced by the containment of $H$ into
$K$. In particular, the subgroup $H_{i+1,p}$ such that $\mathcal A(H_{i+1,p}) =\mathcal A(H)\sim_{i+1}$ is a free factor of $K$, and hence $H_{i+1,p}$ is $p$-closed. Moreover, we have $H \subseteq H_{i+1,p} \subseteq K \subsetneqq H_{i,p}$, and hence $H_{i+1,p}$ is properly contained in $H_{i,p}$ and $\sim_{i+1}$ is properly contained in $\sim_{i}$. The automaton congruence $\sim_{i+1}$ is computed as follows. If $r$ and $s$ are states of $\mathcal A(H)$, we have $r\sim_{i+1} s$ if and only if $u_r u_s^{-1}\in K$, that is, if and only if $u_r u_s^{-1}\in H_{i,p}$ and $\sigma_i\kappa_i^{-1}(u_r u_s^{-1})\in \sigma_i\kappa_i^{-1}(H)$. To verify whether $u_r u_s^{-1}\in H_{i,p}$, and to compute in that case $\kappa_i^{-1}(u_r u_s^{-1})$, we run the reduced word obtained from $u_r u_s^{-1}$ in the automaton $\mathcal A(H_{i,p})$ starting at $1$, 
we note down the edges traversed that are not in the chosen spanning tree of that automaton (as in Step 2), 
and we require that this path ends in $1$. Then $\sigma_i\kappa_i^{-1}(u_r u_s^{-1})$ is the image of that word in $(\mathbb{Z}/p\mathbb{Z})^A$. Now it suffices to verify whether the vector $\sigma_i\kappa_i^{-1}(u_r u_s^{-1})$ lies in the vector subspace $\sigma_i\kappa_i^{-1}(H)$. This can be done effectively, using the basis of $\sigma_i\kappa_i^{-1}(H)$ computed in Step 3.
\end{enumerate}
They also proved that the nil-closure of $H$ is the intersection over all primes $p$ of the $p$-closures of $H$~\cite[Corollary 4.1]{Mar01}.

Let $M$ be a finite $A$-generated monoid and $\mathsf{H}$ a pseudovariety of groups.
Steinberg in \cite[Theorem 7.4]{Ste01} proved that $M\in \mathsf{J} \malcev \mathsf{H}$ if and only if the following conditions are satisfied:
\begin{enumerate}
\item $Sch_A(X)$ is an $\mathsf{H}$-extendible inverse $A$-graph, for each regular $\mathcal{R}$-class $X$;
\item $Sch_A^{\rho}(Y)$ is an $\mathsf{H}$-extendible inverse $A$-graph, for each regular $\mathcal{L}$-class $Y$.
\end{enumerate}

Let $A=\{a,b\}$ and $l$ be a positive integer. We define $\widetilde
A$-automata $\mathcal A_l$, with $l+1$ states, and $\mathcal B_l$ and
$\mathcal C_l$, each with $l$~states by the diagrams in
Figure~\ref{fig:diagrams}.
\begin{figure}[ht]
  \begin{center}
    \begin{tikzpicture}[>=latex, shorten >=0pt, shorten <=0pt, scale=0.9]
      \draw (-2,5.75) node (name) {$\mathcal A_l:$};
      \draw (1,4.38) node (1) {$\alpha_1$};
      \draw (2.8,3.51) node [] (2) {$\alpha_2$};
      \draw (3.25,1.56) node (3) {$\alpha_3$};
      \draw (2,0) node (4) {$\alpha_4$};
      \draw (0,0) node (ll-2) {$\alpha_{l-2}$};
      \draw (-1.25,1.56) node (ll-1) {$\alpha_{l-1}$};
      \draw (-0.8,3.51) node (ll) {$\alpha_l$};
      \draw (-0.5,5.25) node [initial, initial by arrow, initial text={}, initial where=above,
                              accepting, accepting by arrow, accepting where=above]
                              (ll+1) {$\alpha_{l+1}$};
      \path (1) edge [->, bend left=10, thick] node[above] {$a,b$} (2)
            (2) edge [->, bend left=10, thick] node[right] {$a,b$} (3)
            (3) edge [->, bend left=10, thick] node[right] {$a,b$} (4)
            (4) edge [->, bend left=10, dashed, thick] node[below] {$a,b$} (ll-2)
            (ll-2) edge [->, bend left=10, thick] node[left] {$a,b$} (ll-1)
            (ll-1) edge [->, bend left=10, thick] node[left] {$a,b$} (ll)
            (ll) edge [->, bend left=10, thick] node[above] {$a$} (1)
            (ll+1) edge [->, bend left=10, thick] node[above] {$b$} (1);
    \end{tikzpicture}

    \begin{tikzpicture}[>=latex, shorten >=0pt, shorten <=0pt, scale=0.9]
      \draw (-1,5.0) node (name) {$\mathcal B_l:$};
      \draw (1,4.38) node [initial, initial by arrow, initial text={}, initial where=above,
                           accepting, accepting by arrow, accepting where=above]
                          (1) {$\beta_1$};
      \draw (2.8,3.51) node (2) {$\beta_2$};
      \draw (3.25,1.56) node (3) {$\beta_3$};
      \draw (2,0) node (4) {$\beta_4$};
      \draw (0,0) node (ll-2) {$\beta_{l-2}$};
      \draw (-1.25,1.56) node (ll-1) {$\beta_{l-1}$};
      \draw (-0.8,3.51) node (ll) {$\beta_l$};
      \path (1) edge [->, bend left=10, thick] node[above] {$a,b$} (2)
            (2) edge [->, bend left=10, thick] node[right] {$a,b$} (3)
            (3) edge [->, bend left=10, thick] node[right] {$a,b$} (4)
            (4) edge [->, bend left=10, dashed, thick] node[below] {$a,b$} (ll-2)
            (ll-2) edge [->, bend left=10, thick] node[left] {$a,b$} (ll-1)
            (ll-1) edge [->, bend left=10, thick] node[left] {$a,b$} (ll)
            (ll) edge [->, bend left=10, thick] node[above] {$a$} (1);
    \end{tikzpicture}
    \qquad
    \begin{tikzpicture}[>=latex, shorten >=0pt, shorten <=0pt, scale=0.9]
      \draw (-1,5.0) node (name) {$\mathcal C_l:$}; 
      \draw (1,4.38) node [initial, initial by arrow, initial text={}, initial where=above,
                           accepting, accepting by arrow, accepting where=above]
                          (1) {$\gamma_1$}; 
      \draw (2.8,3.51) node (2) {$\gamma_2$}; 
      \draw (3.25,1.56) node (3) {$\gamma_3$}; 
      \draw (2,0) node (4) {$\gamma_4$}; 
      \draw (0,0) node (ll-2) {$\gamma_{l-2}$};
      \draw (-1.25,1.56) node (ll-1) {$\gamma_{l-1}$}; 
      \draw (-0.8,3.51) node (ll) {$\gamma_l$};
      \path (1) edge [->, bend left=10, thick] node[above] {$a,b$} (2)
            (2) edge [->, bend left=10, thick] node[right] {$a,b$} (3)
            (3) edge [->, bend left=10, thick] node[right] {$a,b$} (4)
            (4) edge [->, bend left=10, dashed, thick] node[below] {$a,b$} (ll-2)
            (ll-2) edge [->, bend left=10, thick] node[left] {$a,b$} (ll-1)
            (ll-1) edge [->, bend left=10, thick] node[left] {$a,b$} (ll)
            (ll) edge [->, bend left=10, thick] node[above] {$a,b$} (1);
    \end{tikzpicture}
  \end{center}
  \caption{Diagrams of the automata $\mathcal A_l$, $\mathcal B_l$,
    and $\mathcal C_l$}
  \label{fig:diagrams}
\end{figure}
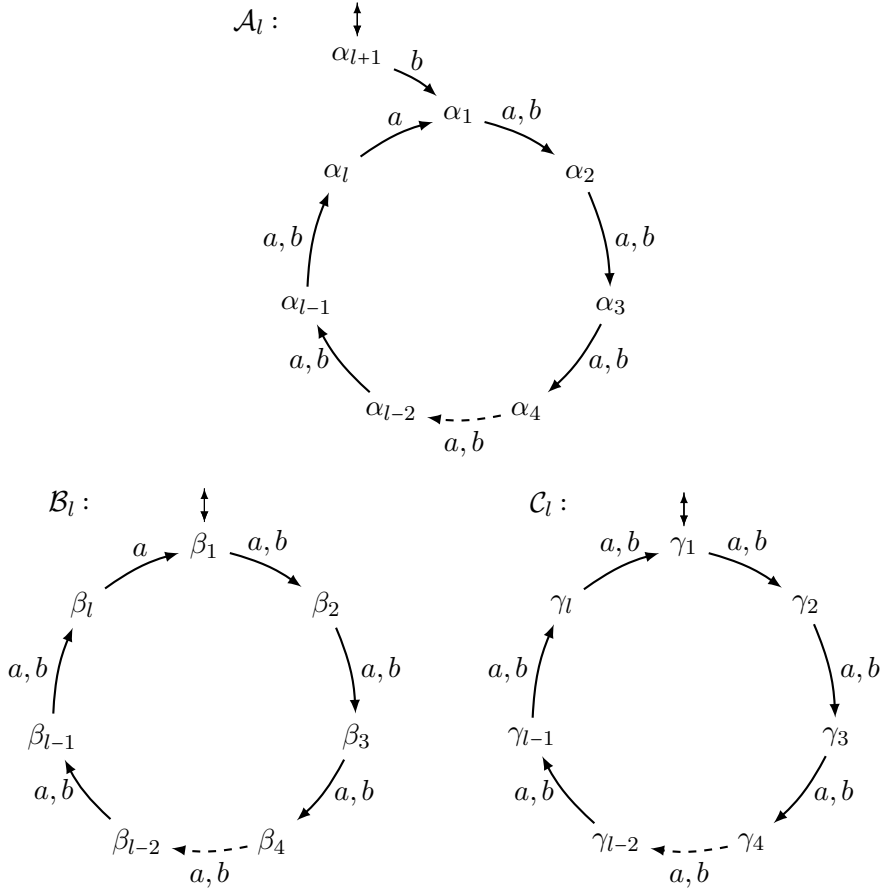
Margolis, Sapir and Weil proved that $\mathcal A_6$ is not $\mathsf{G}_{nil}$-extendible. We extend their result using a similar technique through the following lemma and theorem.

\begin{lem} \label{B_nC_n}
Let $n$ be a positive integer. Suppose that $n=p_1^{n_1}\,\cdots\, p_m^{n_m}$ where $p_1,\ldots,p_m$ are pairwise distinct prime numbers and $1\leq n_1,\ldots,n_m$. Let $\mathcal B_{n}=\mathcal A(H)$ and $\mathcal C_{n}=\mathcal A(H')$, for some finitely generated subgroups $H$ and $H'$ of $F(\{a,b\})$. If $m=1$, then $\mathrm{Cl}_{nil}(H)=H$, otherwise, we have $\mathrm{Cl}_{nil}(H)=H'$.
\end{lem}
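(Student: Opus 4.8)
The plan is to compute $\mathrm{Cl}_{nil}(H)$ through its primary decomposition, using that $\mathrm{Cl}_{nil}(H)=\bigcap_{p}\mathrm{Cl}_p(H)$, the intersection running over all primes $p$ and $\mathrm{Cl}_p$ denoting the pro-$p$ closure (see~\cite[Corollary~4.1]{Mar01}). First I would describe $H$ and $H'$ concretely. In $\mathcal C_n$ the letters $a$ and $b$ act as one and the same $n$-cycle on the vertex set, so a reduced word labels a closed path at the base vertex exactly when its total signed exponent sum is a multiple of $n$; hence $H'=\varepsilon^{-1}(n\mathbb Z)$, where $\varepsilon\colon F(\{a,b\})\to\mathbb Z$ is the homomorphism with $\varepsilon(a)=\varepsilon(b)=1$. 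More generally, for a fixed prime $p$ set $M_j=\varepsilon^{-1}(p^j\mathbb Z)$; then $M_0=F(\{a,b\})$, the subgroup $M_j$ has index $p^j$, and $\mathcal A(M_j)=\mathcal C_{p^j}$ (with $\mathcal C_1$ understood as the one-vertex bouquet), since $\varepsilon(a)$ generates $\mathbb Z/p^j\mathbb Z$. For $H$ itself, the spanning tree of $\mathcal B_n$ given by its $a$-labeled path $\beta_1\to\cdots\to\beta_n$ yields the basis $\{a^n\}\cup\{a^{i-1}ba^{-i}\mid 1\le i\le n-1\}$, so $H\subseteq H'$ and $H$ has infinite index (as $b$ is only partially defined on $\mathcal B_n$). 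For a prime $p\nmid n$, the matrix $\mathfrak M_p(H)$ over $\mathbb Z/p\mathbb Z$ built from the abelianizations of this basis has the rows $(n,0)$ and $(-1,1)$, which are independent over $\mathbb Z/p\mathbb Z$ (their determinant is $n\not\equiv 0$); thus $\mathfrak M_p(H)$ has rank $2$ and $\mathrm{Cl}_p(H)=F(\{a,b\})$.

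The heart of the argument is the case of a prime $p\mid n$; write $a=v_p(n)$ and $m'=n/p^a$, so $p\nmid m'$. I would run the procedure of Margolis, Sapir and Weil recalled above and show inductively that, for $1\le j\le a$, its $j$-th iteration starts with $H_{j-1,p}=M_{j-1}$. Reading the automaton $\mathcal C_{p^{j-1}}=\mathcal A(M_{j-1})$ with the spanning tree given by its $a$-labeled path gives the basis $c_0=a^{p^{j-1}}$, $c_i=a^{i-1}ba^{-i}$ $(1\le i\le p^{j-1}-1)$, $c_{p^{j-1}}=a^{p^{j-1}-1}b$, and the generators of $H$ translate as $a^n\mapsto c_0^{\,n/p^{j-1}}$ and, on writing $i-1=sp^{j-1}+r$, $a^{i-1}ba^{-i}\mapsto c_0^{\,s}c_{r+1}c_0^{-s}$ when $r<p^{j-1}-1$ and $a^{i-1}ba^{-i}\mapsto c_0^{\,s}c_{p^{j-1}}c_0^{-(s+1)}$ when $r=p^{j-1}-1$. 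Since $p\mid n/p^{j-1}$ whenever $j\le a$, the abelianized row of $a^n$ is zero modulo $p$, the remaining rows span exactly the hyperplane $x_{c_0}+x_{c_{p^{j-1}}}=0$, and therefore $H$ is not $p$-dense in $M_{j-1}$ while its $(\mathbb Z/p\mathbb Z)$-closure inside $M_{j-1}$ equals $M_j$ (because $\varepsilon$ sends $c_0$ and $c_{p^{j-1}}$ to generators of $p^{j-1}\mathbb Z/p^j\mathbb Z$ and the other $c_i$ to $0$). The induced automaton congruence $\sim_j$ on $\mathcal B_n$ identifies $\beta_i$ with $\beta_{i'}$ exactly when $i\equiv i'\pmod{p^j}$; as long as $p^j<n$ these classes are non-singletons, so the partially defined letter $b$ becomes total on the quotient, $\mathcal B_n/{\sim_j}$ folds to $\mathcal C_{p^j}$, and $H_{j,p}=M_j$, which closes the induction.

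Two endgames then remain, and this is the step I expect to demand the most care. If $n=p^k$ is a prime power (so $m'=1$, equivalently $m=1$ in the statement), then at $j=a$ the congruence $\sim_a$ has modulus $p^a=n$ and so is trivial; hence $\mathcal B_n/{\sim_a}=\mathcal B_n$, $H_{a,p}=H$, and at the next step $H$ is trivially $p$-dense in itself, so the procedure halts with $\mathrm{Cl}_p(H)=H$. If $m'>1$, the induction runs all the way to $H_{a,p}=M_a$, and at iteration $a+1$ the abelianized row of $a^n$ becomes $m'\,e_{c_0}$ with $m'\not\equiv 0\pmod p$, which together with the other rows spans the whole space; hence $H$ is $p$-dense in $M_a$ and $\mathrm{Cl}_p(H)=M_a=\varepsilon^{-1}(p^a\mathbb Z)$. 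Assembling the pieces: if $n=p^k$, then $p$ is the only prime with $\mathrm{Cl}_p(H)\ne F(\{a,b\})$, so $\mathrm{Cl}_{nil}(H)=\mathrm{Cl}_p(H)=H$; if $n=p_1^{n_1}\cdots p_m^{n_m}$ with $m\ge 2$, then $\mathrm{Cl}_{nil}(H)=\bigcap_{i=1}^m\varepsilon^{-1}(p_i^{n_i}\mathbb Z)=\varepsilon^{-1}(n\mathbb Z)=H'$, using $\operatorname{lcm}(p_1^{n_1},\dots,p_m^{n_m})=n$. The main obstacle is the inductive bookkeeping: tracking the changing bases $A_{j-1}$ and the natural isomorphisms $\kappa_{j-1}\colon F(A_{j-1})\to M_{j-1}$, checking that each quotient automaton folds to $\mathcal C_{p^j}$ exactly and not to something larger, and handling the boundary iteration $j=a$ correctly according to whether $m'=1$.
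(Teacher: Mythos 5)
Your proposal is correct and follows essentially the same route as the paper: it runs the Margolis--Sapir--Weil $p$-closure algorithm for each prime, identifies the successive quotients $H_{j,p}$ with the subgroups whose Stallings automata are $\mathcal C_{p^j}$ (degenerating to $\mathcal B_{p^{n_p}}$ exactly in the prime-power case), and then intersects the $p$-closures via \cite[Corollary 4.1]{Mar01}. Your systematic use of the exponent-sum homomorphism $\varepsilon$ to name the intermediate subgroups $M_j=\varepsilon^{-1}(p^j\mathbb Z)$ and to evaluate the final intersection is only a cleaner bookkeeping of the same rank computations the paper performs with explicit matrices.
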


\begin{proof}
We take as a spanning tree of $\mathcal A(H)$ the path from vertex $\beta_1$ to $\beta_n$, labeled $a$ for every edge. Then, we have
$$H=\langle ab^{-1},a^2b^{-1}a^{-1},\ldots,a^{n-1}b^{-1}a^{-(n-2)},a^n\rangle,$$ and we need to compute the rank of the matrix
$\begin{bmatrix}
       1 & -1\\[0.3em]
       n & 0
     \end{bmatrix}$. Since $n=p_1^{n_1}\,\cdots\, p_m^{n_m}$, for every prime $p$ with $p\not\in\{p_1,\ldots,p_m\}$, this matrix has rank 2 and, thus, $H$ is $p$-dense in $F(\{a,b\})$. 
    
Suppose that $p\in\{p_1,\ldots,p_m\}$. 
Since $\sigma_0(H)$ is generated by $a - b$, we have $\beta_i\sim_1 \beta_{i+p}\sim_1 \beta_{i+2p}\sim_1 \ldots$, for every integer $1\leq i\leq p$ and there is no relation $\sim_1$ between any vertices $\beta_{i}$ and $\beta_{i+k_1p+k_2}$, for every integer $1\leq i\leq p$, $0\leq k_1$ and $1\leq k_2\leq p-1$ with $i+k_1p+k_2\leq n$. 

If $n$ is a prime number, then $\mathcal{A}(H_{1,p})=\mathcal{B}_p$ and, thus $H$ is $p$-closed. Hence, we have $\mathrm{Cl}_{nil}(H)=H$.

Now, suppose that $n$ is not prime. First, we assume that $1<m$. 
There is no relation $\sim_1$ between any vertices $\beta_i$ and $\beta_j$ with $1\leq i,j\leq p$ and $i\neq j$. Since $1<m$, there are edges between $\beta_i$ and $\beta_{i+1}$ for $1\leq i\leq p-1$ labeled $a$ and $b$. Now, as $\beta_{i}\sim_1 \beta_{i+k_1p}$, for every integer $1\leq i\leq p$ and $0\leq k_1$, we have $\mathcal{A}(H_{1,p})=\mathcal{C}_p$.
Hence, we have 
\begin{align*}
H_{1,p}=\langle& ab^{-1}(=x_1),a^2b^{-1}a^{-1}(=x_2),\ldots,a^{p-1}b^{-1}a^{-(p-2)}(=x_{p-1}),a^{p-1}b(=x_p),\\
&a^p(=x_{p+1})\rangle.
\end{align*}
Let $1\leq n'\leq n-1$. There exist integers $0\leq k_1$ and $1\leq k_2\leq p$ such that $n'=k_1p+k_2$.
Since $a^{p}b^{-1}a^{-(p-1)}=x_{p+1}x_p^{-1}$ and
$$a^{n'}b^{-1}a^{-(n'-1)}=a^{k_1p}a^{k_2}b^{-1}a^{-(k_2-1)}a^{-k_1p},$$ we have $a^{n'}b^{-1}a^{-(n'-1)}=x_{p+1}^{k_1}x_ix_{p+1}^{-k_1}$, for some $1\leq i\leq p-1$, or $a^{n'}b^{-1}a^{-(n'-1)}=x_{p+1}^{k_1}x_{p+1}x_p^{-1}x_{p+1}^{-k_1}$.
Now, as $p\in\{p_1,\ldots,p_m\}$ and $$H=\langle ab^{-1},a^2b^{-1}a^{-1},\ldots,a^{n-1}b^{-1}a^{-(n-2)},a^n\rangle,$$ we have 
\begin{align*}
H=\langle& x_1,x_2,\ldots,x_{p-1},x_{p+1}x_p^{-1},x_{p+1}x_1x_{p+1}^{-1},\ldots,x_{p+1}x_{p-1}x_{p+1}^{-1},x_{p+1}x_{p+1}x_p^{-1}x_{p+1}^{-1},\\
&x_{p+1}^2x_1x_{p+1}^{-2},\dots,x_{p+1}^{r-1}x_{p-1}x_{p+1}^{-(r-1)},x_{p+1}^r\rangle
\end{align*}
which $r=n/p$. Then, we need to compute the rank of the matrix
$$\begin{bmatrix}
    1 & 0 &  \dots  & 0 & 0 & 0\\
    0 & 1 &  \dots  & 0 & 0 & 0\\
    \vdots & \vdots &  \ddots & \vdots & \vdots& \vdots\\
    0 & 0 &  \dots  & 1 & 0 & 0\\
    0 & 0 &  \dots  & 0 & -1 & 1\\
    1 & 0 &  \dots  & 0 & 0 & 0\\
    \vdots & \dots &  \vdots & \vdots & \vdots& \vdots\\
     0 & 0 &  \dots  & 1 & 0 & 0\\
    0 & 0 &  \dots  & 0 & 0 & r
     \end{bmatrix}.$$
If $n_p=1$, then the rank of this matrix is $p+1$ and, thus, $H$ is $p$-dense in $H_{1,p}$. Hence, we have $\mathrm{Cl}_{p}(H)=H_{1,p}$. 
Otherwise, the rank of this matrix is $p$ and, thus, we must calculate $H_{2,p}$. 
If $\gamma_i\sim_2 \gamma_j$, for some integers $1\leq i,j\leq p$, then $a^{i-j}\in H_{1,p}$ and, thus, $i-j=i'p$, for some integer $i'$. Hence, we have $u_iu_j^{-1}=x_{p+1}^{i'}$. If $\sigma_2(x_{p+1}^{i'})\in \sigma_2\kappa_2^{-1}(H)$, then $i'=i''p$, for some integer $i''$ and, thus $u_iu_j^{-1}=a^{i''p^2}$. It follows that
$\mathcal{A}(H_{2,p})=\mathcal{C}_{p^2}$. By induction, it is easy to verify that $\mathcal{A}(H_{i,p})=\mathcal{C}_{p^i}$ for all $1\leq i\leq n_p$. Then, we have 
\begin{align*}
H_{n_p,p}=\langle& ab^{-1}(=x_1),a^2b^{-1}a^{-1}(=x_2),\ldots,a^{p^{n_p}-1}b^{-1}a^{-(p^{n_p}-2)}(=x_{p^{n_p}-1}),\\
&a^{p^{n_p}-1}b(=x_{p^{n_p}}),a^{p^{n_p}}(=x_{p^{n_p}+1})\rangle
\end{align*}
and
$$H=\langle x_1,x_2,\ldots,x_{p^{n_p}-1},x_{p^{n_p}+1}x_{p^{n_p}}^{-1},\dots,x_{p^{n_p}+1}^{r_{n_p}}\rangle$$ which $r_{n_p}=n/p^{n_p}$. Then, we need to compute the rank of the matrix
$$\begin{bmatrix}
    1 & 0 &  \dots  & 0 & 0 & 0\\
    0 & 1 &  \dots  & 0 & 0 & 0\\
    \vdots & \vdots &  \ddots & \vdots & \vdots& \vdots\\
    0 & 0 &  \dots  & 1 & 0 & 0\\
    0 & 0 &  \dots  & 0 & -1 & 1\\
    1 & 0 &  \dots  & 0 & 0 & 0\\
    \vdots & \dots &  \vdots & \vdots & \vdots& \vdots\\
     0 & 0 &  \dots  & 0 & -1 & 1\\
    0 & 0 &  \dots  & 0 & 0 & r_{n_p}
     \end{bmatrix}.$$
The rank of this matrix is $p+1$ and, thus, $H$ is $p$-dense in $H_{n_p,p}$. Hence, we have $\mathrm{Cl}_{p}(H)=H_{n_p,p}$.

Therefore, we have $\mathrm{Cl}_{nil}(H)=H_{n_{p_1},p_1}\cap\ldots\cap H_{n_{p_m},p_m}$. 
Let $\lambda=\omega_1^{\kappa_1}\,\cdots\, \omega_{m'}^{\kappa_{m'}}$ with $\omega_1,\ldots,\omega_{m'}\in\{a,b\}$ and $\mathcal{C}_l=\mathcal{A}(H_{\mathcal{C}_l})$ for some integer $l$ and finitely generated subgroup $H_{\mathcal{C}_l}$ of $F(\{a,b\})$. We have $\lambda \in H_{\mathcal{C}_l}$ if and only if $\kappa_1+\cdots+\kappa_{m'} \equiv_l 0$. Hence $\lambda\in H_{n_{p_1},p_1}\cap\ldots\cap H_{n_{p_m},p_m}$ if and only if $\kappa_1+\cdots+\kappa_{m'} \equiv 0 \pmod {p_i^{n_i}}$, for all $1\leq i\leq m$. Therefore, we have $H_{n_{p_1},p_1}\cap\ldots\cap H_{n_{p_m},p_m}=H'$. 

Now, we assume that $m=1$. Similarly, we have $\mathcal{A}(H_{i,p})=\mathcal{C}_{p^{i}}$, for all $1\leq i\leq n_p-1$, and $\mathcal{A}(H_{n_p,p})=\mathcal{B}_{p^{n_p}}$. Thus $H$ is $p$-closed and, we have $\mathrm{Cl}_{nil}(H)=H$.
\end{proof}

\begin{thm} \label{A_n}
Let $\mathcal A$ be an inverse automaton. If there exists an integer $n$ such that $n=p_1^{n_1}\,\cdots\, p_m^{n_m}$ where $p_1,\ldots,p_m$ are pairwise distinct prime numbers, $1\leq n_1,\ldots,n_m$, $m>1$ and 
$\mathcal A_n$ is a subgraph of $\mathcal A$, then $\mathcal A$ is not $\mathsf{G}_{nil}$-extendible.
\end{thm}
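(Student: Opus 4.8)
The plan is to reduce the assertion to the single automaton $\mathcal A_n$ and then to invoke Lemma~\ref{B_nC_n} through a conjugation. Recall that $\mathsf{G}_{nil}$-extendibility of an $\widetilde A$-automaton means that it embeds into a complete automaton whose transition group lies in $\mathsf{G}_{nil}$; since any subgraph of an automaton embeds into every complete automaton into which the ambient automaton embeds, if $\mathcal A$ were $\mathsf{G}_{nil}$-extendible then so would be its subgraph $\mathcal A_n$. Hence it suffices to show that $\mathcal A_n$ is not $\mathsf{G}_{nil}$-extendible. Now $\mathcal A_n$ is a connected, folded, inverse $\widetilde A$-automaton in which every vertex has out-degree at least $2$ except the basepoint $\alpha_{n+1}$ (which has out-degree $1$); thus it is the Stallings automaton $\mathcal A(H_n)$ of its fundamental group $H_n$, and by the characterization of $\mathsf{G}_{nil}$-extendibility recalled above, $\mathcal A_n$ is $\mathsf{G}_{nil}$-extendible if and only if the automaton congruence $\sim_{H_n,\mathrm{Cl}_{nil}(H_n)}$ on $\mathcal A(H_n)$ is trivial. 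So the task becomes to exhibit two distinct vertices of $\mathcal A_n$ that are $\sim_{H_n,\mathrm{Cl}_{nil}(H_n)}$-equivalent.

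The key remark is that $\mathcal A_n$ is obtained from $\mathcal B_n$ by adjoining $\alpha_{n+1}$ as a new basepoint, joined to the vertex $\beta_1$ (henceforth renamed $\alpha_1$) of $\mathcal B_n$ by a single edge labelled $b$. Because $\beta_1$ has no incoming $b$-edge in $\mathcal B_n$, no folding is triggered by this adjunction, and it exhibits $H_n = b\,K_n\,b^{-1}$, where $K_n$ is the fundamental group of $\mathcal B_n$, so that $\mathcal B_n = \mathcal A(K_n)$. Since $m>1$, Lemma~\ref{B_nC_n} gives $\mathrm{Cl}_{nil}(K_n) = K'_n$, where $\mathcal C_n = \mathcal A(K'_n)$. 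As $\mathrm{Cl}_{nil}$ commutes with automorphisms of $F(\{a,b\})$, in particular with conjugation by $b$, we obtain
$$\mathrm{Cl}_{nil}(H_n) \;=\; \mathrm{Cl}_{nil}\bigl(b\,K_n\,b^{-1}\bigr) \;=\; b\,\mathrm{Cl}_{nil}(K_n)\,b^{-1} \;=\; b\,K'_n\,b^{-1}.$$

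It remains to name the identified pair. In $\mathcal C_n$ there is an edge $\gamma_n \xrightarrow{\,b\,}\gamma_1$, so the reduced word $a^{n-1}b$ labels a loop at $\gamma_1$ and hence belongs to $K'_n$; therefore $b\,(a^{n-1}b)\,b^{-1} = b\,a^{n-1}$ belongs to $b\,K'_n\,b^{-1} = \mathrm{Cl}_{nil}(H_n)$, and so does its inverse $(b\,a^{n-1})^{-1}$. On the other hand, reading $b\,a^{n-1}$ in $\mathcal A_n$ from the basepoint $\alpha_{n+1}$ follows the path $\alpha_{n+1}\xrightarrow{\,b\,}\alpha_1\xrightarrow{\,a\,}\alpha_2\to\cdots\xrightarrow{\,a\,}\alpha_n$; hence one may take $u_{\alpha_{n+1}}$ to be the empty word and $u_{\alpha_n}=b\,a^{n-1}$. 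Then $u_{\alpha_{n+1}}\,u_{\alpha_n}^{-1}=(b\,a^{n-1})^{-1}\in\mathrm{Cl}_{nil}(H_n)$, so by the criterion for $\sim_{H_n,\mathrm{Cl}_{nil}(H_n)}$ recalled above we get $\alpha_{n+1}\sim_{H_n,\mathrm{Cl}_{nil}(H_n)}\alpha_n$, while $\alpha_{n+1}\neq\alpha_n$. Thus $\sim_{H_n,\mathrm{Cl}_{nil}(H_n)}$ is non-trivial, $\mathcal A_n$ is not $\mathsf{G}_{nil}$-extendible, and by the first step $\mathcal A$ is not $\mathsf{G}_{nil}$-extendible either.

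The step I expect to require most care is the identification $H_n = b\,K_n\,b^{-1}$ together with the commutation of $\mathrm{Cl}_{nil}$ with conjugation by $b$: one must check that $\mathcal A_n$ really is the Stallings automaton of $b\,K_n\,b^{-1}$, so that no hidden folding occurs when the pendant $b$-edge is added and the basepoint is moved to $\alpha_{n+1}$, after which the computation above is routine. Note that the hypothesis $m>1$ is used precisely where Lemma~\ref{B_nC_n} is applied; for $m=1$ that lemma gives $\mathrm{Cl}_{nil}(K_n)=K_n$, and then $\mathcal A_n$ is in fact $\mathsf{G}_{nil}$-extendible, so the hypothesis cannot be dropped.
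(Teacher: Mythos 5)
Your proof is correct and follows essentially the same route as the paper's: reduce to $\mathcal A_n$, identify its fundamental group as $b\,K_n\,b^{-1}$ with $\mathcal B_n=\mathcal A(K_n)$, apply Lemma~\ref{B_nC_n} together with the compatibility of $\mathrm{Cl}_{nil}$ with conjugation, and conclude non-extendibility. The only (harmless) divergence is at the very end: the paper derives the contradiction from the impossibility of embedding the $(n+1)$-vertex automaton $\mathcal A_n$ into the $n$-vertex automaton $\mathcal C_n$ (using that the fundamental group of $\mathcal C_n$ is normal), whereas you exhibit the explicit identified pair $\alpha_{n+1}\sim\alpha_n$; both are valid.
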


\begin{proof}
First, we prove that the automaton $\mathcal A_n$ is not $\mathsf{G}_{nil}$-extendible.

There exist finitely generated subgroups $H$, $H'$ and $H''$ of $F(\{a,b\})$ such that $\mathcal A_n=\mathcal A(H)$, $\mathcal B_n=\mathcal A(H')$, and $\mathcal C_n=\mathcal A(H'')$. 
Since $M(\mathcal C_n)$ is a cyclic group, $H''$ is a normal subgroup of $F(\{a,b\})$ and, thus, $bH''b^{-1}=H''$. As conjugation by $b$ is a homomorphism, we have $\mathrm{Cl}_{nil}(H)=\mathrm{Cl}_{nil}(bH'b^{-1})=b\mathrm{Cl}_{nil}(H')b^{-1}$. 
By Lemma~\ref{B_nC_n}, it follows that $\mathrm{Cl}_{nil}(H')=H''$.
Now, as $bH''b^{-1}=H''$, we have $\mathrm{Cl}_{nil}(H)=H''$. If $\mathcal A_n$ is $\mathsf{G}_{nil}$-extendible, then 
$\mathcal A_n$ embeds in $\mathcal C_n$.
This yields a contradiction.

If the automaton $\mathcal A$ is $\mathsf{G}_{nil}$-extendible, then there is a complete automaton $\mathcal D$ such that $\mathcal A\subseteq \mathcal D$ and $M(\mathcal D)\in \mathsf{G}_{nil}$. Hence, there is a complete automaton $\mathcal D'$ such that $\mathcal A_n\subseteq \mathcal D'$ and $M(\mathcal D')\in \mathsf{G}_{nil}$ which is a contradiction.

The result follows.
\end{proof}

\begin{thm} \label{A_nMN}
Let $n$ be a positive integer and $N$ be the subsemigroup of the full transformation semigroup on the set $\{1,\ldots,n+1\} \cup \{\theta\}$ such that 
$$N = \mathcal{M}^0(\{1\},n+1,n+1;I_{n+1})  \cup \langle a,b\rangle\cup\{1\},$$
where $a=(1,2,\ldots,n)$ and $b=(n+1,1,2,\ldots,n,\theta)$.  Then, $N\in\mathsf{MN}$ if and only if the integer $n$ is odd.  
\end{thm}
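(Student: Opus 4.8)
The plan is to apply the criterion of Lemma~\ref{[,]'}, after first placing $N$ inside $\mathsf{BG}_{nil}$. Since every generator of $N$ is a partial injection of $\{1,\dots,n+1\}$ (with $\theta$ read as ``undefined''), $N$ embeds into the symmetric inverse monoid on $\{1,\dots,n+1\}$, hence is a block group; its maximal subgroups are cyclic, generated by suitable powers of $a$, so they are nilpotent, and thus $N\in\mathsf{BG}_{nil}$. Next I would locate the regular $\mathcal{J}$-classes of $N$ that can feed Lemma~\ref{[,]'}: the only one with more than one $\mathcal{R}$-class is the $0$-minimal ideal $M=\mathcal{M}^{0}(\{1\},n+1,n+1;I_{n+1})$, the only other regular $\mathcal{J}$-class being that of the idempotent $a^{n}$, whose elements are exactly the powers of $a$ and which forms a single $\mathcal{H}$-class isomorphic to $\mathbb{Z}/n$; since condition~(2) of Lemma~\ref{[,]'} needs $\alpha\neq\alpha'$, any witness of $N\notin\mathsf{MN}$ must have $A=M$ and $B=\{\theta\}$. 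Finally, the $\mathcal{L}_{M}$-representation $\Gamma$ of $N$, restricted to $\langle a,b\rangle^{1}$, is precisely the natural action of $a$ and $b$ on the index set $\{1,\dots,n+1\}$ of the $\mathcal{L}$-classes of $M$, and the resulting $\{a,b\}$-graph on $\{1,\dots,n+1\}$ is exactly the automaton $\mathcal{A}_{n}$ of Figure~\ref{fig:diagrams} under the identification $\alpha_{i}\leftrightarrow i$. Since the elements of $M$ have rank $1$ and the identity fixes every point, in a witness at least one of $w,v$ lies in $\langle a,b\rangle$ and the other in $\langle a,b\rangle^{1}$. Altogether, $N\notin\mathsf{MN}$ if and only if there exist distinct $\beta,\beta'$ and distinct $\alpha,\alpha'$ in $\{1,\dots,n+1\}$ and words $w,v$ over $\{a,b\}$ such that in $\mathcal{A}_{n}$ the word $w$ takes $\beta\mapsto\alpha'$ and $\beta'\mapsto\alpha$, while $v$ takes $\beta\mapsto\alpha$ and $\beta'\mapsto\alpha'$.

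The combinatorial core is to show this configuration exists iff $n$ is even, and two elementary facts about $\mathcal{A}_{n}$ suffice. First, the vertex $n+1$ has in-degree $0$, so no nonempty run can end there; hence $\alpha,\alpha'\in\{1,\dots,n\}$. Second, whenever the run of a word $u$ starting at a vertex $x\in\{1,\dots,n\}$ is defined, it ends at $x+|u|$ computed modulo $n$ inside $\{1,\dots,n\}$, because every edge leaving a vertex of $\{1,\dots,n\}$ increments the index by $1$ modulo $n$; consequently $u(x)-u(y)\equiv x-y\pmod n$ whenever both sides are defined. If $\beta,\beta'\in\{1,\dots,n\}$, applying this to $w$ and to $v$ gives $\alpha'-\alpha\equiv\beta-\beta'\equiv\alpha-\alpha'\pmod n$, so $2(\alpha-\alpha')\equiv0\pmod n$; since $\alpha\neq\alpha'$ lie in $\{1,\dots,n\}$ this forces $n$ even. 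If one of $\beta,\beta'$ equals $n+1$, say $\beta'=n+1$ (the two cases being symmetric), then both $w$ and $v$ must begin with $b$ (the only edge leaving $n+1$) and $\beta\le n-1$ (else $b$ is undefined at $\beta$); deleting the leading $b$ turns the runs into runs from $\beta+1$ and from $1$ inside $\{1,\dots,n\}$, and the same modular identity again yields $2(\alpha-\alpha')\equiv0\pmod n$, hence $n$ even. This proves that $n$ odd implies $N\in\mathsf{MN}$.

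For the converse, assume $n$ is even and put $r=n/2$. I would exhibit the witness $A=M$, $B=\{\theta\}$, $x=(1;1,1)$, $y=(1;1+r,1+r)$, $w=a^{r}$ and $v=a^{n}$. Indeed $\Gamma(w)$ is the $r$-th power of the $n$-cycle, which interchanges the indices $1$ and $1+r$, so $[1,1+r;1+r,1]\sqsubseteq\Gamma(w)$; and $\Gamma(v)$ is the identity on $\{1,\dots,n\}$, so $[1+r,1+r;1,1]\sqsubseteq\Gamma(v)$. As $w,v\in\langle a\rangle\subseteq N\setminus\{\theta\}$ and the row indices $1$ and $1+r$ of $x$ and $y$ are distinct, conditions~(1)--(3) of Lemma~\ref{[,]'} all hold, so $N\notin\mathsf{MN}$. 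Combining the two directions gives $N\in\mathsf{MN}$ if and only if $n$ is odd.

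The modular counting in the second paragraph is routine. The step that needs most care is the bookkeeping in the first paragraph: verifying that $N\in\mathsf{BG}_{nil}$, that $M$ is the only regular $\mathcal{J}$-class of $N$ capable of producing the configuration of Lemma~\ref{[,]'}, and that $\Gamma$ restricted to $\langle a,b\rangle$ is the action whose graph is $\mathcal{A}_{n}$ --- that is, translating faithfully between the abstract Rees-matrix language of Lemma~\ref{[,]'} and the concrete transformation semigroup $N$. The split according to whether $n+1$ occurs among $\beta,\beta'$ is a minor but unavoidable extra case.
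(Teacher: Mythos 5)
Your proof is correct, but it takes a genuinely different route from the paper's. The paper argues directly from Lemma~\ref{finite-nilpotent}: a witness pair $x\neq y$ with $x=\lambda_m$, $y=\rho_m$ may a priori sit in any regular $\mathcal{J}$-class, so the authors must separately dispose of the case $x,y\in\langle a,b\rangle$, which they do by a fairly heavy computation counting word lengths modulo $n$ and comparing the domain sets of $x$ and $y$; only then do they treat the case $x,y$ in the minimal ideal, where they derive the same congruence $2(\alpha_1-\alpha_2)\equiv 0\pmod n$ that you obtain. Your detour through Lemma~\ref{[,]'} buys you exactly the elimination of that first case: the lemma forces the obstruction into a regular $\mathcal{J}$-class with two distinct $\mathcal{R}$-indices, and since the only regular $\mathcal{J}$-classes of $N$ are $\{1\}$, the single group $\mathcal{H}$-class $\langle a\rangle\cong\mathbb{Z}/n$ (no word containing $b$ is regular, as nothing in $N$ of rank $>1$ maps onto $n+1$), and the minimal ideal, the whole analysis collapses onto the automaton $\mathcal{A}_n$ --- which also makes the link with Theorem~\ref{A_n} transparent. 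The price is the bookkeeping you flag (verifying $N\in\mathsf{BG}_{nil}$ and the classification of regular $\mathcal{J}$-classes), all of which checks out; your even-case witness is essentially the paper's, transported into the format of Lemma~\ref{[,]'}. A further small plus of your version: you handle the subcase $\beta'=n+1$ explicitly by stripping the leading $b$, where the paper's modular identities are stated only for indices in $\{1,\ldots,n\}$.
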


\begin{proof}
If $n$ is an even integer, then we have $n=2n'$ for some positive integer $n'$. It follows that 
$$(1;1,n'+1) = \lambda_{2}((1;1,n'+1), (1;n'+1,1),a^{n},a^{n'})$$ and 
$$(1;n'+1,1) = \rho_{2}((1;1,n'+1), (1;n'+1,1),a^{n},a^{n'}).$$ Therefore, we have $N\not\in\mathsf{MN}$.

Now, we suppose that $n$ is odd. We prove $N\in\mathsf{MN}$ by contradiction. If $N\not\in\mathsf{MN}$, then, by Lemma~\ref{finite-nilpotent}, there exist a positive integer $m$, distinct elements $x, y\in N$ and elements 
$ w_{1}, w_{2}, \ldots, w_{m}\in N$ such that $x = \lambda_{m}(x, y, w_{1}, w_{2}, \ldots, w_{m})$ and $y = \rho_{m}(x,y, w_{1}, w_{2}, \ldots, w_{m})$. Since $N$ is the subsemigroup of the full transformation semigroup on the set $\{1,\ldots,n+1\} \cup \{\theta\}$, there exist integers $1\leq e_1,\ldots,e_{n_1}\leq n+1$ and $1\leq f_1,\ldots,f_{n_2}\leq n+1$ such that $\abs{\{e_1,\ldots,e_{n_1}\}}=n_1$, $\abs{\{f_1,\ldots,f_{n_2}\}}=n_2$,
$\Gamma(x)(e_i),\Gamma(y)(f_j)\neq \theta$, for every $1\leq i\leq n_1$ and $1\leq j\leq n_2$, 
and $\Gamma(x)(e)=\Gamma(y)(f)= \theta$, for every $e\not\in\{e_1,\ldots,e_{n_1}\}$ and $f\not\in\{f_1,\ldots,f_{n_2}\}$
where $\Gamma$ is an ${\mathcal L}_{\mathcal{M}^0(\{1\},n+1,n+1;I_{n+1})}$-representation of $N$. 
It is easy to verify that $x$ and $y$ are in a regular $\mathsf{J}$-class. Hence, we have $n_1=n_2$.

Since $1=(1)(2)\,\cdots\,(n+1)$, $a=(1,2,\ldots,n)$ and $b=(n+1,1,2,\ldots,n,\theta)$, the elements $1$ and $w$ are not in the same $\mathsf{J}$-class for every $w\in \langle a,b \rangle$. Hence, the $\mathsf{J}$-class of $1$ has only one element and, thus $x,y\neq 1$.  

First, suppose that $x,y\in \langle a,b\rangle$. Thus, we have $w_1,\ldots,w_m\in \langle a,b\rangle^1$ and there exist letters $$c_1,\ldots,c_{m_1},d_1,\ldots,d_{m_2},w_{1,1},\ldots,w_{1,l_1},\ldots,w_{m,1},\ldots,w_{m,l_m}\in\{a,b\}$$ such that $x=c_1\,\cdots\, c_{m_1}$, $y=d_1\,\cdots\, d_{m_2}$ and $w_i=w_{i,1}\,\cdots\, w_{i,l_i}$  (if $w_i=1$, we put $l_i=0$), for all $1\leq i\leq m$. Since $x,y\neq 0$, there exist integers $1\leq i,j\leq n+1$ and $1\leq i',j'\leq n$ such that $\Gamma(x)(i)=i'$ and $\Gamma(y)(j)=j'$.
As $x = \lambda_{m}(x, y, w_{1}, w_{2}, \ldots, w_{m})$, if $i\neq n+1$, then we have 
$$m_1=2^{m-1}(m_1+m_2+l_1)+2^{m-2}l_2+\ldots+2^0l_m=i'-i\pmod n;$$
otherwise, we have
$$m_1=2^{m-1}(m_1+m_2+l_1)+2^{m-2}l_2+\ldots+2^0l_m=i'\pmod n.$$
Similarly, as $y = \rho_{m}(x,y, w_{1}, w_{2}, \ldots, w_{m})$,  if $j\neq n+1$, then we have 
$$m_2=2^{m-1}(m_2+m_1+l_1)+2^{m-2}l_2+\ldots+2^0l_m=j'-j\pmod n;$$
otherwise, we have
$$m_2=2^{m-1}(m_2+m_1+l_1)+2^{m-2}l_2+\ldots+2^0l_m=j'\pmod n.$$ 
Therefore we have $m_1=m_2\pmod n$. 

Again, as $x = \lambda_{m}(x, y, w_{1}, w_{2}, \ldots, w_{m})$, if $1\leq i\leq n_1$, then $$\Gamma(y)(\Gamma(xw_1)(e_i))\neq 0$$ and, thus, $\Gamma(xw_1)(e_i)\in\{f_1,\ldots,f_{n_1}\}$. Similarly, as $y = \rho_{m}(x,y, w_{1}, \ldots, w_{m})$, if $1\leq j\leq n_1$, then $$\Gamma(x)(\Gamma(yw_1)(f_j))\neq 0$$ and, thus, $\Gamma(yw_1)(f_j)\in\{e_1,\ldots,e_{n_1}\}$.
Now, since $x\neq y$ and $m_1=m_2\pmod n$, there exist subsets $$\{i_1,\ldots,i_{n'}\}, \{j_1,\ldots,j_{n'}\}\subseteq\{1,\ldots,n_1\}$$ such that $e_{i_{t+1}}-e_{i_{t}}=2(m_1+l_1)\pmod n$, $f_{j_{t+1}}-f_{j_{t}}=2(m_1+l_1)\pmod n$, $f_{j_{t}}-e_{i_{t}}=(m_1+l_1)\pmod n$, for every $1\leq t< n'$, $e_{i_{1}}-e_{i_{n'}}=2(m_1+l_1)\pmod n$, $f_{j_{1}}-f_{j_{n'}}=2(m_1+l_1)\pmod n$, $f_{j_{n'}}-e_{i_{n'}}=(m_1+l_1)\pmod n$ and $\{i_1,\ldots,i_{n'}\}\neq \{j_1,\ldots,j_{n'}\}$. 

Since $n$ is odd, there exist integers $r$ and $s$ such that $2r+ns=1$. Hence, we have $2r(m_1+l_1)=(m_1+l_1)\pmod n$.
Therefore, $\{i_1,\ldots,i_{n'}\}\cap \{j_1,\ldots,j_{n'}\}\neq \emptyset$ and thus $\{i_1,\ldots,i_{n'}\}= \{j_1,\ldots,j_{n'}\}$, a contradiction. 

Now, suppose that $x,y\in \mathcal{M}^0(\{1\},n+1,n+1;I_{n+1})\setminus \langle a,b\rangle$.
It follows that $x=(1;\alpha_1,\beta_1)$ and $y=(1;\alpha_2,\beta_2)$, for some integers $1\leq \alpha,\beta\leq n+1$. Thus, we have $[\beta_1,\alpha_2;\beta_2,\alpha_1]\sqsubseteq\Gamma(w_1)$ and $[\beta_1,\alpha_1;\beta_2,\alpha_2]\sqsubseteq\Gamma(w_2)$. It is easy to verify that $w_1,w_2\in\langle a,b\rangle^1$. Hence, $\alpha_2-\beta_1=\alpha_1-\beta_2\pmod n$ and $\alpha_1-\beta_1=\alpha_2-\beta_2\pmod n$. It follows that $2(\alpha_2-\alpha_1)=0\pmod n$. Since $[\beta_1,\alpha_2;\beta_2,\alpha_1]\sqsubseteq\Gamma(w_1)$, we have $\alpha_1,\alpha_2\neq n+1$. As $n$ is odd, it follows that $\alpha_1=\alpha_2$. Now again, as $[\beta_1,\alpha_2;\beta_2,\alpha_1]\sqsubseteq\Gamma(w_1)$, we have $\beta_1=\beta_2$ and, thus, $x=y$. A contradiction.

The result follows.
\end{proof}

Let $N_1$ be the subsemigroup of the full transformation semigroup on the set $\{1,\ldots,7\} \cup \{\theta\}$ such that
$$N_1 = \mathcal{M}^0(\{1\},7,7;I_{7}) \cup \langle a_{1},b_{1}\rangle\cup\{1\},$$
where $a_{1}=(1,2,\ldots,6)$ and $b_{1}=(7,1,2,\ldots,6,\theta)$ 
and $N_2$ be the subsemigroup of the full transformation semigroup on the set $\{1,\ldots,16\} \cup \{\theta\}$ such that
$$N_2 = \mathcal{M}^0(\{1\},16,16;I_{16}) \cup \langle a_{2},b_{2}\rangle\cup\{1\},$$
where $a_{2}=(1,2,\ldots,15)$ and $b_{2}=(16,1,2,\ldots,15,\theta)$.
By Theorem~\ref{A_n} and \cite[Theorem 7.4]{Ste01}, we have $N_1,N_2\not\in \mathsf{J} \malcev \mathsf{G}_{nil}$. Using, for instance 
a Mathematica package developed by the first author, one can check that $N_1\in\mathsf{BG}_{nil}$. By Theorem~\ref{A_nMN}, it follows that $N_1\not\in\mathsf{MN}$, $N_2\in\mathsf{MN}$. Since $\mathcal{M}^0(\{1\},16,16;I_{16})$ is an ideal of $N_2$ and $(1,2,\ldots,15)\in N_2$, we have $N_2\not\in \mathsf{SMN}$.

\begin{opm} 
Does there exist a finite semigroup $S$ such that $S\in \mathsf{SMN}\setminus \mathsf{J} \malcev \mathsf{G}_{nil}$?
\end{opm} 

Now, we present semigroups $M_i$ such that $M_i\in \mathsf{BG}_{nil}$, for all $1\leq i\leq 3$, and the following conditions are satisfied:
\begin{enumerate}
\item $M_1\in \mathsf{J} \malcev \mathsf{G_{nil}}$ and $M_1\in\mathsf{SMN}$;
\item $M_2\in \mathsf{J} \malcev \mathsf{G_{nil}}$ and $M_2\in(\mathsf{MN}\setminus \mathsf{SMN})$;
\item $M_3\in \mathsf{J} \malcev \mathsf{G_{nil}}$ and $M_3\not\in\mathsf{MN}$.
\end{enumerate} 

Let $M_1,M_2$ be the subsemigroup of the full transformation semigroup on the set $\{1,\dots,6\} \cup \{\theta\}$ such that
$$M_1 = \mathcal{M}^0(\{1\},6,6;I_{6}) \cup \{ c_{1},d_{1},1\},$$
where $c_{1}=(1,4,\theta)(2,5,\theta)(3,6,\theta)$ and $d_{1}=(1,5,\theta)(2,6,\theta)(3,4,\theta)$,
$$M_2 = \mathcal{M}^0(\{1\},6,6;I_{6}) \cup \{ c_{2},d_{2},e_{2},1\},$$
where $c_{2}=(1,4,\theta)(2,5,\theta)(3,6,\theta)$, $d_{2}=(1,5,\theta)(2,6,\theta)(3,4,\theta)$ and $e_{2}=(1,6,\theta)(2,4,\theta)(3,5,\theta)$, and  $M_3$ be the subsemigroup of the full transformation semigroup on the set $\{1,\dots,4\} \cup \{\theta\}$ such that
$$M_3 = \mathcal{M}^0(\{1\},4,4;I_{4}) \cup \{ c_{3},d_{3},1\},$$
where $c_{3}=(1,2,\theta)(3,4,\theta)$ and $d_{3}=(1,4,\theta)(3,2,\theta)$. 

By Lemmas~\ref{[,]'} and \ref{[,]}, we have $M_1\in\mathsf{SMN}$, $M_2\in(\mathsf{MN}\setminus \mathsf{SMN})$ and $M_3\not\in\mathsf{MN}$.
Also, by \cite[Theorem 7.4]{Ste01}, we have $M_1,M_2,M_3\in \mathsf{J} \malcev \mathsf{G_{nil}}$.

In terms of pseudovarieties, the results of this section may be summarized as follows:
\begin{thm}\label{MN-JmGnil} 
The pseudovarieties $\mathsf{MN}$ and $\mathsf{J} \malcev \mathsf{G_{nil}}$ are incomparable.
\end{thm}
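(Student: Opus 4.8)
The plan is to prove incomparability in the usual way: exhibit a finite semigroup lying in $\mathsf{MN}$ but not in $\mathsf{J} \malcev \mathsf{G_{nil}}$, and another lying in $\mathsf{J} \malcev \mathsf{G_{nil}}$ but not in $\mathsf{MN}$. Both witnesses are already at hand among the examples constructed just above the statement, so the proof amounts to assembling the relevant membership and non-membership facts.

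First I would treat the inclusion $\mathsf{MN} \not\subseteq \mathsf{J} \malcev \mathsf{G_{nil}}$ using the semigroup $N_2$. Applying Theorem~\ref{A_nMN} with $n = 15$, which is odd, gives $N_2 \in \mathsf{MN}$. On the other hand, $15 = 3 \cdot 5$ is a product of two distinct primes, so Theorem~\ref{A_n} shows that the automaton $\mathcal A_{15}$, which occurs as a subgraph of a Sch\"utzenberger graph of a regular $\mathcal R$-class of $N_2$ coming from the cyclic action of $a_2$ and $b_2$, is not $\mathsf{G}_{nil}$-extendible. By Steinberg's characterisation of $\mathsf{J} \malcev \mathsf{G}_{nil}$ via $\mathsf{G}_{nil}$-extendibility of Sch\"utzenberger graphs \cite[Theorem 7.4]{Ste01}, this yields $N_2 \notin \mathsf{J} \malcev \mathsf{G}_{nil}$.

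Next I would treat $\mathsf{J} \malcev \mathsf{G_{nil}} \not\subseteq \mathsf{MN}$ using the semigroup $M_3$. By \cite[Theorem 7.4]{Ste01} the relevant Sch\"utzenberger graphs of $M_3$ are $\mathsf{G}_{nil}$-extendible, so $M_3 \in \mathsf{J} \malcev \mathsf{G_{nil}}$; whereas Lemma~\ref{[,]'}, applied to the inverse Rees matrix ideal $\mathcal{M}^0(\{1\},4,4;I_4)$ of $M_3$ together with the non-regular elements $c_3$ and $d_3$ that realise the required pair of $\mathcal L_M$-orbits, gives $M_3 \notin \mathsf{MN}$. Combining the two witnesses establishes that neither pseudovariety contains the other, which is the assertion.

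Since the heavy lifting is done by Theorems~\ref{A_n} and \ref{A_nMN} and by Lemma~\ref{[,]'}, the only real obstacle is to check that their hypotheses genuinely apply to $N_2$ and $M_3$: one must confirm that $\mathcal A_{15}$ does embed into the appropriate Sch\"utzenberger graph of $N_2$, and that the orbit conditions of Lemma~\ref{[,]'} are met by $c_3$ and $d_3$ inside $M_3$. Beyond these verifications no further argument is needed.
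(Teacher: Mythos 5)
Your proposal is correct and follows essentially the same route as the paper: the theorem is stated as a summary of the section, with $N_2$ (in $\mathsf{MN}$ by Theorem~\ref{A_nMN} for $n=15$ odd, but outside $\mathsf{J}\malcev\mathsf{G_{nil}}$ by Theorem~\ref{A_n} for $15=3\cdot 5$ together with Steinberg's criterion) and $M_3$ (in $\mathsf{J}\malcev\mathsf{G_{nil}}$ by Steinberg's criterion but outside $\mathsf{MN}$ by Lemma~\ref{[,]'}) serving as the two witnesses. Your choice of witnesses and the verifications you flag are exactly those the paper relies on.
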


\textit{Acknowledgments.}
The work of the first and third authors was supported, in part, by CMUP
(UID/MAT/00144/2013), which is funded by FCT (Portugal)
with national (MCTES) and European structural funds through the programs
FEDER, under the partnership agreement PT2020. The second author was supported by the DFG projects DI 435/5-2 and KU 2716/1-1. The work of the third author was also partly supported by the FCT
post-doctoral scholarship SFRH/BPD/89812/2012. The work was also carried out within an FCT/DAAD bilateral collaboration project involving the Universities of Porto and Stuttgart.

\bibliographystyle{plain}
\bibliography{ref-SMN-LA}

\end{document}